\numberwithin{equation}{section}
\theoremstyle{plain}
\newtheorem{theorem}{Theorem}[section]
\newtheorem{corollary}[theorem]{Corollary}
\newtheorem{lemma}[theorem]{Lemma}
\newtheorem{proposition}[theorem]{Proposition}
\theoremstyle{remark}
\newtheorem{remark}[theorem]{Remark}
\def\a{\alpha}
\def\g{\gamma}
\def\f{\frac}
\def\b{\bar}
\newcommand{\dd}{{\rm d}}
\newcommand{\FU}{\mathbf{U}}
\newcommand{\CD}{\mathcal {D}}
\newcommand{\CE}{\mathcal{E}}
\newcommand{\CF}{\mathcal{F}}
\newcommand{\CI}{\mathcal{I}}
\newcommand{\CL}{\mathcal{L}}
\newcommand{\cm}{\mathcal{m}}
\newcommand{\cw}{\mathcal{w}}
\newcommand{\cp}{\mathfrak{p}}
\newcommand{\cu}{\mathfrak{u}}
\newcommand{\cv}{\mathfrak{v}}
\newcommand{\pa}{\partial}
\newcommand{\vep}{\varepsilon}
\newcommand{\eqdef}{\overset{\mbox{\tiny{def}}}{=}}
\newcommand{\dv}{\text{div}_\alpha}
\begin{document}
\date{\today}
\title[Instability of boundary layer in compressible flow]{Linear instability analysis on compressible Navier-Stokes equations with strong boundary layer}

\author[T. Yang]{Tong Yang}
\address[T. Yang]{Department of Mathematics, City University of Hong Kong, Hong Kong, China}
\email{matyang@cityu.edu.hk}

\author[Z. Zhang]{Zhu Zhang}
\address[Z. Zhang]{Department of Mathematics, City University of Hong Kong, Hong Kong, China}
\email{zhuzhangpde@gmail.com}
\begin{abstract}
It is a classical problem in fluid dynamics about the stability and instability of different hydrodynamic patterns in various physical settings, in particular in the high Reynolds number limit of laminar flow with boundary layer. However, 
there are very few mathematical results on the compressible fluid despite the extensive studies when
the fluid is governed by  the
 incompressible Navier-Stokes equations. This paper aims to introduce a new approach to study
 the compressible Navier-Stokes equations  in the subsonic and high Reynolds number regime where a subtle quasi-compressible and Stokes iteration is 
 {developed}. As a byproduct, we show the spectral instability of subsonic boundary layer.
\end{abstract}

\maketitle
\tableofcontents
\section{Introduction}
One of the most fundamental problems in fluid mechanics is to understand the  physical mechanisms that lead to stability or instability of hydrodynamic patterns. With high Reynolds number, most of the laminar flows are unstable  so the small perturbation will eventually cascade into turbulence. Under many circumstances, the early stage of such transition is the  instability induced by viscous disturbance wave, which is now named as  Tollmien-Schlichting or T-S wave. For incompressible flow, the physical description of T-S  waves can be found, for instance in the pioneer work by Heisenberg, C.C. Lin, Tollmien and Schlichting, { cf.} \cite{DR,H,L,SG}, and a formal construction was established by Wasow \cite{W}. Until recently, the rigorous mathematical justification was given by Grenier-Guo-Nguyen \cite{GGN1}.

From physical point of view, it is important to study the compressible flow with boundary layer that
 arises from for instance the flow near the airfoil.
The theoretical investigation can be traced back to Lee-Lin \cite{LL} in which the Rayleigh's criterion for inviscid flow was extended to the compressible subsonic flow. Later on, the asymptotic expansion used in \cite{LL} near the critical layer was rigorously justified by Morawitz \cite{MR}. 
For more investigation from the physical perspectives, we refer to  \cite{DR,L,LR,SG} and the references therein. It is worth noting that the instability mechanisms studied in these literatures are inviscid in nature while the viscous transition mechanisms have not yet been investigated. The aim of this paper is to fill in this gap by  justifying  the presence of T-S wave in compressible boundary layer rigorously.

\subsection{Problem and main result} Consider the 2D compressible Navier-Stokes equations for isentropic flow in  half-space $\{(x,y)\mid x\in \mathbb{T},y\in \mathbb{R}_+\}:$
\begin{equation}\label{1.1}
\left\{\begin{aligned}
&\pa_t\rho+\nabla\cdot(\rho \vec{U})=0,\\
&\rho\pa_t\vec{U}+\rho\vec{U}\cdot\nabla \vec{U}+\nabla P(\rho)=\mu\vep\Delta \vec{U}+\lambda\vep\nabla(\nabla\cdot\vec{U})+\rho\vec{F},\\
&\vec{U}|_{y=0}=\vec{0}.
\end{aligned}\right.
\end{equation}
In above equations $\rho$,  $\vec{U}=(u,v)$ and $P(\rho)$ stand for the density, velocity field and pressure of the fluid. The vector field $\vec{F}$ is a given external force. 
The constants $\mu>0, \lambda\geq 0$ are rescaled shear and bulk viscosity respectively, while $0<\vep\ll1$ is a small parameter  which is proportional to the reciprocal of the Reynolds number. For simplicity and without loss of generality, the constant $\mu$ is set  to 1 throughout the paper. 

A shear flow with boundary layer  is defined by
$$(\rho_s,\vec{\FU}_s)\eqdef(1,U_s(Y),0),~ Y:=\frac{y}{\sqrt{\vep}},~\text{with }U_s(0)=0, \lim_{Y\rightarrow +\infty}U_s(Y)=1.$$
It is a steady solution to \eqref{1.1} with external force $\vec{F}=(-\pa_Y^2U_s,0)$. 

In this paper, we study the compressible Navier-Stokes system linearized around above  shear flow. Denote the Mach number by $\cm:=\frac{1}{\sqrt{P'(1)}}$. The linearization gives
\begin{equation}\label{1.2}
\left\{
\begin{aligned}
&\pa_t\rho+U_s\pa_x\rho+\nabla\cdot \vec{u}=0,~t>0,~(x,y)\in \mathbb{T}\times\mathbb{R}_+,\\
&\pa_t\vec{u}+U_s\pa_x\vec{u}+\cm^{-2}\nabla \rho+v\pa_yU_s\vec{e}_1-{\vep}\Delta\vec{u}-\lambda\vep\nabla (\nabla\cdot \vec{u})-\rho\vec{F}=0,\\
&\vec{u}|_{y=0}=\vec{0}.
\end{aligned}
\right.
\end{equation}
To study \eqref{1.2}, we use the rescaled variable
$$\tau=\frac{t}{\sqrt{\vep}},~X=\frac{x}{\sqrt{\vep}},~Y=\frac{y}{\sqrt{\vep}},
$$
and then look for solution to the linearized compressible Navier-Stokes system in the following form
$$(\tilde{\rho},\tilde{u},\tilde{v})(Y)e^{i\a(X-c\tau)}.
$$
Plugging this ansatz into \eqref{1.2} yields to the following system (we replace $(\tilde{\rho},\tilde{u},\tilde{v})$ by $(\rho,u,v)$ for simplicity of notation)
\begin{equation}\label{1.4}
\left\{
\begin{aligned}
&i\a(U_s-c)\rho+\dv(u,v)=0,\\
&\sqrt{\vep}\Delta_\a u+\lambda i\a\sqrt{\vep}\dv(u,v)-i\a(U_s-c)u-(i\a\cm^{-2}+\sqrt{\vep}\pa_Y^2U_s)\rho-v\pa_YU_s=0,\\
&\sqrt{\vep}\Delta_\a v+\lambda \sqrt{\vep}\pa_Y\dv(u,v)-i\a(U_s-c)v-\cm^{-2}\pa_Y\rho=0,
\end{aligned}
\right.
\end{equation}
with no-slip boundary condition
\begin{align}\label{1.4-1}
u|_{Y=0}=v|_{Y=0}=0.
\end{align}
In \eqref{1.4}, 
$\Delta_\a =(\pa_Y^2-\a^2)$ and	
$\dv(u,v)=i\a u+\pa_Y v$  denote the Fourier transform of Laplacian and divergence operators respectively. For convenience, we denote by $\CL(\rho,u,v)$ the linear operator \eqref{1.4}. If for some $c$ with $\text{Im}c>0$ and wave number $\a\in \mathbb{R}$, the boundary value problem \eqref{1.4} with \eqref{1.4-1} has a non-trivial solution, then the boundary layer profile $(\rho_s,\vec{\FU}_s)$ is spectral unstable. Otherwise, it is spectral stable.

In the following analysis, we focus on  a class of shear flows that satisfy the following assumptions:
\begin{itemize}
	\item $U_s\in C^3(\overline{\mathbb{R}_+})$ and satisfies
	\begin{align}
	U_s(0)=0,~U_s(Y)>0,~\lim_{Y\rightarrow +\infty}U_s(Y)=1,~\text{and } U_s'(0)=1.\label{A0}
	\end{align}
	\item There exist positive constants $s_0$, $s_1$ and $s_2$, such that
	\begin{align}
	\label{A2}
	s_1e^{-s_0Y}\leq \pa_YU_s(Y)\leq s_2e^{-s_0Y}, ~\forall Y\geq 0.	
	\end{align}
	\item  The boundary layer flow is assumed to be {\it  uniformly subsonic}, i.e. $\cm\in (0,1)$. Moreover, there exists a constant $\sigma_1=\sigma_1(\cm)>0$ such that for all $Y>0$, it holds
	\begin{align}\label{A1}
	H(Y)\eqdef\frac{-\pa_Y^2U_s(1-\cm^2U_s^2)-2\cm^2U_s|\pa_YU_s|^2}{|\pa_Y^2U_s|+|\pa_YU_s|^2}\geq \sigma_1.
	\end{align}
	\item There exists a constant $\sigma_2>0$  such that for any $Y>0,$ it holds
	\begin{align}\label{A3}
	\left|\frac{\pa_Y^3U_s}{\pa_Y^2U_s}\right|+\f{|\pa_Y^2U_s|}{\pa_YU_s}+\frac{1-U_s}{\pa_YU_s}\leq \sigma_2.
	\end{align}
\end{itemize}
Note that this class of profiles include the exponential profile $U_s(Y)=1-e^{-Y}$ with $\sigma_1(\cm)=\frac{1-\cm^2}{2}$ and $s_0=s_1=s_2=\sigma_2=1.$ Moreover, from \eqref{A1}, we have
\begin{align}\label{A4}
-\pa_Y^2U_s(Y)\geq \frac{\sigma_1}{1-\cm^2}|\pa_YU_s(Y)|^2,~\forall Y\geq 0.
\end{align}
Hence, the profiles in this class are strictly concave.

The  main result in the paper can be stated  as follows.

\begin{theorem}\label{thm1.1}
	Let the Mach number $\cm\in (0,\f{1}{\sqrt{3}})$. There is $K_0>1$ such that for any sufficiently small $0<\vep\ll 1$ and $\a=K\vep^{\f18}$ with $K\geq K_0$, there exists $c_\vep\in \mathbb{C}$ with $\a\text{Im}c_\vep\approx \vep^{\f14}$, such that the linearized compressible Navier-Stokes system \eqref{1.2} admits a solution $(\rho,u,v)$ in the form of
	\begin{align}\label{sol}
	(\rho,u,v)(t,x,y)=e^{\frac{i\a}{\sqrt{\vep}}(x-c_\vep t)}(\tilde{\rho},\tilde{u},\tilde{v})(Y),~Y:=\frac{y}{\sqrt{\vep}},
	\end{align}
	where the profile $(\tilde{\rho},\tilde{u},\tilde{v}) \in H^1(\mathbb{R}_+)\times \left( H^2(\mathbb{R}_+)\cap H^1_0(\mathbb{R}_+)  \right)^2$ and satisfies \eqref{1.4}.
\end{theorem}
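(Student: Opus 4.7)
The plan is to follow the Grenier--Guo--Nguyen philosophy \cite{GGN1} of producing a Tollmien--Schlichting mode as a superposition of (i) an inviscid "quasi-compressible" profile $(\rho_Q,u_Q,v_Q)$ solving \eqref{1.4} with $\vep=0$, which satisfies $v_Q|_{Y=0}=0$ but leaves a nonzero slip $u_Q|_{Y=0}\neq 0$, and (ii) a viscous "Stokes" corrector $(\rho_S,u_S,v_S)$ localized in a thin sublayer which cancels this slip but injects a small amount of normal velocity $v_S|_{Y=0}$. Imposing $v_Q(0)+v_S(0)=0$ yields a scalar dispersion relation for $c$; the unstable eigenvalue $c_\vep$ is produced as a root of this relation via an analytic implicit function theorem in a small disk around a neutral value $c_0\in(0,1)$, and the whole construction is closed by a quasi-compressible/Stokes iteration absorbing the $O(\sqrt{\vep})$ mismatch.

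\textbf{Step 1: Quasi-compressible (inviscid) mode.} Setting $\vep=0$ in \eqref{1.4} and eliminating $\rho$ through the continuity equation $\rho=-\dv(u,v)/[i\a(U_s-c)]$ yields a compressible Rayleigh (Lees--Lin) system for $(u_Q,v_Q)$, equivalent after introducing a stream function $\psi$ to an ODE of the form $\pa_Y\!\bigl[(1-\cm^2 U_s^2)^{-1}\pa_Y\psi\bigr]-\a^2\psi - \frac{\pa_Y^2 U_s - 2\cm^2 U_s|\pa_Y U_s|^2/(1-\cm^2 U_s^2)}{U_s-c}\psi = 0$. The subsonic hypothesis $\cm<1/\sqrt{3}$ together with the sign condition \eqref{A1} produces a coercivity estimate that makes this operator invertible on $H^1_0\cap H^2$ for $c$ in an open set off the range of $U_s$; we then continue $c$ across the critical layer $U_s(Y_c)=\text{Re}\,c$ by Frobenius expansion (shifting contour thanks to $\text{Im}\,c>0$) to obtain a decaying mode $\psi_Q(Y;c)$ that is analytic in $c$. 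Assumptions \eqref{A2}--\eqref{A3} give the uniform-in-$c$ exponential decay and bounds on the lower-order coefficients that are needed for this to extend to $c$ in a small disk.

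\textbf{Step 2: Stokes sublayer and dispersion relation.} To kill $u_Q(0;c)$ we construct $(\rho_S,u_S,v_S)$ in the rescaled sublayer variable $Z=Y/\delta$ with $\delta=\bigl(\sqrt{\vep}/(\a U_s'(0))\bigr)^{1/3}$, where the dominant balance in \eqref{1.4} is the Airy-type equation $\sqrt{\vep}\,\pa_Y^2 u_S-i\a(U_s'(0)Y-c)u_S\approx 0$; its decaying solution is the classical Langer/Airy function. Compressibility enters through the pressure term, but a careful bookkeeping shows $\dv(u_S,v_S)$ is small of order $\cm^2\vep^{1/4}$ so the Stokes mode is effectively incompressible at leading order, with $v_S(0)\sim \a\delta\cdot u_Q(0;c)$ times the Airy--Tietjens function of $c$. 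Demanding $v_Q(0;c)+v_S(0;c)=0$ reduces to $F_{\rm Ray}(c)=\vep^{1/4}\,\Phi_{\rm Airy}(c)+o(1)$, and for $\a=K\vep^{1/8}$ with $K$ large one solves this by perturbing off the neutral Rayleigh value $c_0$, producing a root with $\a\,\text{Im}\,c_\vep \approx \vep^{1/4}$ as asserted.

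\textbf{Step 3: Closing the iteration, and the main difficulty.} The sum $(\rho_Q+\rho_S,u_Q+u_S,v_Q+v_S)$ solves \eqref{1.4} only up to a remainder supported in the sublayer of size $\sqrt{\vep}$; feeding it back as a source into a Stokes problem and then into a quasi-compressible problem gives a contraction in weighted $H^2(\R_+)$ spaces adapted to the decay rate $s_0$ in \eqref{A2} and to the sublayer scale $\delta$. The main obstacle will be the interface between the two constructions: unlike the incompressible case where vorticity decouples and only one scalar unknown propagates through the critical layer, here the density/pressure mode is nonlocal and must be tracked both in the outer Rayleigh region (to control the analytic dependence of $F_{\rm Ray}$ on $c$ near $c_0$, including the logarithmic branch generated at $Y_c$) and in the Stokes sublayer (to verify that the compressible corrections to the Airy--Tietjens function are truly $o(1)$ so that the dispersion relation can still be solved); the condition $\cm<1/\sqrt{3}$ enters precisely in closing this compatibility.
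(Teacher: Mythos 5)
Your Steps 1--2 reproduce the paper's construction of the approximate growing mode (slow Lees--Lin mode plus Airy sublayer, with a scalar dispersion relation in $c$), up to an equivalent normalization: the paper cancels the normal velocity at $Y=0$ and imposes $u_{\text{app}}(0;c)=0$, you do the reverse. But two points are genuinely off. First, there is no ``neutral Rayleigh value $c_0\in(0,1)$'' to perturb from: the profiles here are strictly concave by \eqref{A4}, so the inviscid problem has no interior neutral mode, and the Tollmien--Schlichting root lies at $c_0\approx K(1-\cm^2)^{-1/2}\vep^{\f18}\to 0$. The paper locates it by Rouch\'e's theorem in a disk of radius $K^{-1-\theta}(1-\cm^2)^{\f14}\vep^{\f18}$, comparing $\CF_{\text{app}}$ with the explicit linear reference function \eqref{2.3.3-1}, not by an implicit function theorem at a fixed order-one $c_0$.

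Second, and more seriously, your Step 3 is where the paper's actual work lies, and the iteration you describe --- an inviscid Rayleigh solve alternating with an Airy/Stokes sublayer solve --- does not close. The $O(\sqrt\vep)$ error left by the inviscid approximation contains two derivatives of the approximate solution; it lies only in $L^2$ with no decay, whereas a Rayleigh-type solver needs its source in the weighted space $L^2_w$ (weight $|\pa_Y^2U_s|^{-\f12}$, exponentially growing) and in $H^1$. This loss of regularity and decay is precisely why the paper replaces the inviscid system by the \emph{viscous} quasi-compressible operator $L_Q$ (diffusion modified to be divergence-free, which yields the compressible Orr--Sommerfeld equation \eqref{t3} and a regularizing effect $\Delta_\a\varrho\in L^2$), pairs it with the Stokes operator $L_S$ obtained by dropping only the stretching term $v\pa_YU_s$, and proves that the composite error operator $E_S\circ L_S^{-1}\circ E_Q\circ L_Q^{-1}$ is a contraction: $E_Q$ carries the factor $\sqrt\vep$, $L_S^{-1}$ restores $H^2$ regularity, and $E_S=(0,\psi\pa_YU_s,0)$ restores the exponential weight. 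Your proposal uses ``quasi-compressible'' for the inviscid system and ``Stokes'' for the Airy sublayer, and so misses this mechanism entirely. Relatedly, the hypothesis $\cm<\f{1}{\sqrt3}$ is not a compatibility condition between the outer and sublayer regions; it enters the energy estimate for the quasi-compressible Orr--Sommerfeld equation, guaranteeing that $\cw_0-U_s\cw_1$ in \eqref{3.1.15-2} has a positive lower bound.
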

	\begin{remark} In the following we give several remarks on Theorem \ref{thm1.1}.
\begin{itemize}			
	\item[(a)] As shown in the proof, the solution bounds depend on some negative power of $1-\cm$ that
	 are uniform when $\cm\in (0,\f{1}{\sqrt{3}})$. Therefore, by taking the vanishing Mach number limit,
	 we have the Tollmien-Schlichting wave solution for the  incompressible flow that was analyzed 
	 in 
	 \cite{GGN1}  by Grenier-Guo-Nguyen.
 \item[(b)] The growing modes obtained are supported in the frequency regime $n=\frac{\a}{\sqrt{\vep}}=K\vep^{-\f38}$ and grow like $\exp( K^{-\f23}n^{\f23}t)$ with $K\gg1$.  
	These parameters  indicate  a spectral instability in Gevrey
space with index equal to  $\f32$. How to justify the stability in  Gevrey $\f32$ class for the linear problem \eqref{1.2}  that shares the same index as in  the incompressible case studied in \cite{GMM1} will be discussed in our future work.
	
\item[(c)] By constructing a suitable approximate growing mode, we can also obtain the same results as in Theorem \ref{thm1.1} for the wave number $\a=C\vep^{\beta}$, with $C>0$ and $\beta\in (1/12,1/8)$. Here we will not give the detailed analysis in this regime because we focus on the most unstable mode when $\beta=1/8$. We refer to \cite{LYZ}  on the incompressible MHD system for the related discussions and analysis.
\end{itemize}
\end{remark}
\subsection{Relevant literature}

Since this paper is motivated by the study of inviscid limit and  Prandtl boundary layer expansion for incompressible Navier-Stokes equations, we briefly summarize some related works in this direction. Indeed, there are two main destabilizing mechanisms in the boundary layer that makes inviscid limit problem very challenging. The first one is induced by the inflexion points inside the boundary layer profile. In this case, the linearized Navier-Stokes system exhibits a strong ill-posedness below the analyticity regularity, { cf.}\cite{G1,GN}. Therefore, the results of inviscid limit can be obtained only when the data is analytic at least near the boundary, cf. \cite{KVW,M,NN,SC2,WWZ}. The second one is induced by the small disturbance around a monotone and concave boundary layer profile, called Tollmien-Schlichting instability that has been extensively studied in physical literatures and
 was justified rigorously in Grenier-Guo-Nguyen \cite{GGN1} by constructing a growing mode in Gevrey $3/2$ space to linearized incompressible Navier-Stokes equations. The main idea in \cite{GGN1} is to use the stream function and vorticity, that is, the Orr-Sommerfeld (OS) formulation, and then to solve it via an  iteration based on Rayleigh and Airy equation that  can be viewed as the inviscid and viscous approximation  to the original OS equation.
We also refer to Grenier-Nguyen \cite{GN2} for a result of nonlinear instability for small data that depend on viscosity coefficient. On the other hand, this instability result was complemented by the  work of G\'erard-Varet-Maekawa-Masmoudi \cite{GMM1,GMM2} that establishes the Gevrey stability of Euler plus Prandtl expansion with critical Gevrey index $3/2$; see also \cite{CWZ} for a result in $L^\infty$-setting.
Most recently, the formation of boundary layer is studied by Maekawa \cite{M2} using the Rayleigh profile.  For the Sobolev data, the boundary layer expansion is only valid under certain symmetry assumptions or for steady flows, cf.  \cite{GM,GI,IM,LMN,MT} and the references therein. Finally, we refer to \cite{DDGM,DG,GD} for the instability analysis of  boundary layer profile in different settings.


For compressible flow, even though there are many interesting results on
the  Navier-Stokes equations at high Reynolds number in different settings, cf.  \cite{ADM,LW,P,Wang,WW,WXY,XY,ZZZ} and the references therein,  to our knowledge, the stability/instability properties of strong boundary layer for compressible Navier-Stokes equations have not yet been investigated. Compared to the incompressible Navier-Stokes equations, the major difficulty comes from the fact that the Orr-Sommerfeld formulation is no longer available for the compressible case. As a result, Rayleigh-Airy iteration approach used in \cite{GGN1}  can not be applied, either the approaches used in \cite{GMM1,GMM2}. Therefore, the novelty  of this paper is to introduce a new iteration approach  to study compressible flow in the subsonic and high Reynolds number regime.
We believe that the analytic techniques developed in this work can be used in  other related problems for subsonic flows. 

In the next subsection, we present the strategy of the proof for better understanding of the
detailed analysis in the following sections.

\subsection{Strategy of proof}
The instability analysis is based on the following steps.

\underline{Step 1. Construction of the approximate growing mode.} As in the incompressible case \cite{GGN1}, the T-S instability is driven by the interaction of inviscid and viscous perturbations. Set the  approximate growing mode $\vec{\Xi}_{\text{app}}=(\rho_{\text{app}},u_{\text{app}},v_{\text{app}})$ as
\begin{align}\label{t0}
(\rho_{\text{app}},u_{\text{app}},v_{\text{app}})=(\rho_{\text{app}}^s,u^s_{\text{app}},v_{\text{app}}^s)-\frac{v^s_{\text{app}}(0;c)}{v_{\text{app}}^f(0;c)}(\rho_{\text{app}}^f,u^f_{\text{app}},v_{\text{app}}^f).
\end{align}
Here the slow mode $(\rho_{\text{app}}^s,u^s_{\text{app}},v_{\text{app}}^s)$ is an approximate solution to the inviscid system \eqref{2.1.1}, while the fast mode $(\rho_{\text{app}}^f,u^f_{\text{app}},v_{\text{app}}^f)$ is an approximate solution to the full system \eqref{1.4} which exhibits viscous boundary layer structure near $Y=0$. These solutions will
be given in Sec. 2.1 and  2.2 respectively. Note that the approximate solutions defined in \eqref{t0} have zero normal velocity at the boundary, i.e. $v_{\text{app}}^s(0;c)\equiv0.$ Then to recover the  no-slip boundary condition, inspired by \cite{DDGM,DG}, we analyze the zero point of $\CF_{\text{app}}(c)\eqdef u_{\text{app}}^s(0;c)-\frac{v^s_{\text{app}}(0;c)}{v_{\text{app}}^f(0;c)}u_{\text{app}}^f(0;c)$ by applying
 Rouch\'e's Theorem. Precisely, we study the equation $\CF_{\text{app}}(c)=0$ in a family of  $\vep$-dependent domains $D_0$ (see \eqref{2.3.4}). Then by Rouch\'e's Theorem, we can show that $\CF_{\text{app}}(c)$ has the same number of zero points as a linear function $\CF_{\text{ref}}(c)$ defined by \eqref{2.3.3-1}. In addition,  we prove that $|\CF_{\text{app}}(c)|$ has a strictly positive lower bound on $\pa D_0$.

\underline{Step 2. Stability of the approximate growing mode.} Since the approximate
solution $\vec{\Xi}_{\text{app}}$ exhibits the instability already, this step is to show the existence of an exact solution near $\vec{\Xi}_{\text{app}}$.
This is the most difficult and key  step. For this, we study  the following resolvent problem
\begin{equation}\label{t1}
\left\{
\begin{aligned}
&i\a(U_s-c){\rho}+\dv(u,v)=0,\\
&\sqrt{\vep}\Delta_\a u+\lambda i\a\sqrt{\vep}\dv(u,v)- i\a(U_s-c) u-v\pa_YU_s -(i\a\cm^{-2}+\sqrt{\vep}\pa_Y^2U_s)\rho =f_u,\\
&\sqrt{\vep}\Delta_\a v+\lambda\sqrt{\vep}\pa_Y\dv(u,v)-i\a(U_s-c)v-\cm^{-2}\pa_Y\rho=f_v,\\
&v|_{Y=0}=0,
\end{aligned}
\right.
\end{equation}
with a given inhomogeneous source term $(f_u,f_v)$. Here, we emphasize that in \eqref{t1} only normal velocity field $v$ is prescribed on the boundary. Even though we relax the boundary constraint on  $u$ in \eqref{t1}, it is still difficult for existence because the presence of stretching term $v\pa_YU_s$.
This difficulty is overcome by  the following three ingredients.
\begin{itemize}
	\item \underline{Quasi-compressible approximation.} 
When the inhomogeneous source
$(f_u,f_v)\in H^1(\mathbb{R}_+)^2$, we  introduce the following {\it quasi-compressible} system:
\begin{equation}\label{t2}
  \left\{ 
   \begin{aligned}
&i\a(U_s-c)\varrho +i\a \mathfrak {u}+\pa_Y \mathfrak {v}=0,\\
&\sqrt{\vep}\Delta_\a\left[\cu+(U_s-c)\varrho\right]-i\a(U_s-c)\cu-\cv\pa_YU_s-i\a\cm^{-2}\varrho=f_u,\\
&\sqrt{\vep}\Delta_\a \cv-i\a(U_s-c)\cv-\cm^{-2}\pa_Y\varrho=f_v,\\
&\cv|_{Y=0}=0,
   \end{aligned}
  \right. 
\end{equation}
that will be denoted by $L_Q(\varrho,\cu,\cv)=(0,f_u,f_v)$. Note that the  inviscid part of the original linear operator $\CL$ is kept in $L_Q$, while the diffusion terms are modified to be  divergence free. It turns out that
for Mach number $\cm\in (0,1)$, the system \eqref{t2} exhibits a similar stream function-vorticity structure as the incompressible Navier-Stokes equations.  In fact, if we introduce the stream function $\Psi$ associated to the modified velocity variable $(\cu+(U_s-c)\varrho,\cv)$ satisfying 
$$\pa_Y\Psi=\cu+(U_s-c)\varrho,~ -i\a\Psi=\cv,~\Psi|_{Y=0}=0,
$$ 
then \eqref{t2} can be reformulated in terms of $\Psi$ as
\begin{equation}\label{t3}
\begin{aligned}
\text{OS}_{\text{CNS}}(\Psi)&:=\f{i}{n}\Lambda(\Delta_\a \Psi)+(U_s-c)\Lambda(\Psi)-\pa_Y(A^{-1}\pa_YU_s)\Psi\\
&=f_v-\f{1}{i\a}\pa_Y(A^{-1}f_u),
\end{aligned}
\end{equation}
where $n=\a/\sqrt{\vep}$, $A(Y)=1-\cm^2(U_s-c)^2$ and $\Lambda(\Psi)=\pa_Y(A^{-1}\pa_Y\Psi)-\a^2\Psi$. Note that $A(Y)$  is invertible at least for $\cm\in (0,1)$ and $c$ near the origin.  When the Mach number $\cm=0,$ we have $\Lambda= \Delta_\a$ and  $A(Y)\equiv 1$. Thus $\text{OS}_{\text{CNS}}$ is exactly the classical Orr-Sommerfeld operator for incompressible Navier-Stokes system. Therefore, 
 $\text{OS}_{\text{CNS}}$ can be viewed as the compressible counterpart of the  Orr-Sommerfeld equation, which to our best knowledge is for the first time derived in the literatures. This formulation motivates the notion ``quasi-compressible'' approximation. 
 
 We solve \eqref{t3} with   artificial boundary conditions
$\Psi|_{Y=0}=\Lambda(\Psi)|_{Y=0}=0$ that  allows us to obtain the weighted estimates on $\Lambda(\Psi)$. One can see that when $\cm=0$, these boundary conditions are simply the perfect-slip 
boundary conditions used in \cite{CWZ,GM,GMM2} for the study of incompressible Navier-Stokes equations.  However, for the problem considered in this paper, the multiplier  $\cw(Y)=-\pa_Y(A^{-1}\pa_YU_s)$ is not real. Therefore both its leading and first order terms  $\cw_0$, $\cw_1$ (see Lemma \ref{lm3.1} for the precise definitions) play a role in the energy estimates. 
For the bound estimations, we essentially use $\cm\in (0,\f1{\sqrt{3}})$ and the new structural condition \eqref{A1} of the profile in order to show that  the function $\cw_0-U_s\cw_1$ has a positive lower bound,  cf. \eqref{3.1.15-2} in the proof of Lemma \ref{prop3.1}.

After we obtain $\Psi$ that solves \eqref{t3}, the solution $(\varrho,\cu,\cv)$ to \eqref{t2} can be recovered in terms of $\Psi$, cf.  \eqref{3.1.2} and \eqref{3.1.3}. Here we would like to mention that \eqref{t2} has a regularizing effect on density. That is, 
formally by applying $\dv$ to the momentum equation in \eqref{t2} and by noting that the diffusion term lies in the kernel of $\dv$, we have $\Delta_\a\varrho\in L^2(\mathbb{R}_+)$. 
This  reveals an elliptic structure for the linearized compressible Navier-Stokes equations in the subsonic regime.\\

\item \underline{Stokes approximation.}
Note that $(\varrho,\cu,\cv)$ is not an exact solution to \eqref{t1} and its error is 
 \begin{align}\label{t4}
&E_Q(\varrho,\cu,\cv)\eqdef \CL(\varrho,\cu,\cv)-L_Q(\varrho,\cu,\cv)\nonumber\\
&\qquad\quad=\left(0, -\sqrt{\vep}\Delta_\a\left[(U_s-c)\varrho\right]+\lambda \sqrt{\vep}i\a\dv(\cu,\cv)-\sqrt{\vep}\pa_Y^2U_s\varrho,\lambda\sqrt{\vep}\pa_Y\dv(\cu,\cv)\right).
\end{align}
This error term involves a small factor of $\sqrt{\vep}$ but  lies only in $L^2(\mathbb{R}_+)$. This fact prevents us from using the standard fixed point argument to solve \eqref{t1}. To recover the regularity, we introduce another operator $L_S$ that  we call {\it Stokes approximation}. It is obtained from $\CL$ by removing the stretching term, that is, 
$$ L_S(\xi,\phi,\psi)\eqdef \CL(\xi,\phi,\psi)+(0,\psi\pa_YU_s,0).
$$
To eliminate the error $E_Q(\cp,\cu,\cv)$, we then take $(\xi,\phi,\psi)$ as the solution to
$$
L_S(\xi,\phi,\psi)=-E_Q(\varrho,\cu,\cv),~ \pa_Y\phi|_{Y=0}=\psi|_{Y=0}=0.
$$
By using the energy approach in the same spirit as Matsumura-Nishida \cite{MN} and Kawashima \cite{Ka}, we are able to show $(\xi,\phi,\psi)$ is in $H^1(\mathbb{R}_+)\times H^2(\mathbb{R}_+)^2$. Thus, the error term $E_S(\xi,\phi,\psi):=(0,\psi\pa_YU_s,0)$ is in the weighted space $H^2_w(\mathbb{R}_+)$ so that we can treat it as source term of $L_Q$. Therefore, we can  iterate the above two approximations.\\

\item \underline{Quasi-compressible-Stokes iteration.}
Recall that we have the following two  decompositions of the linear operator $\CL:$
$$\CL=L_Q+E_Q=L_S+E_S.
$$
The solvability to \eqref{t1} can be justified via an iteration scheme that is illustrated as follows. Assume that at the $N$-th step we have an approximate solution in the form of $\sum_{j=0}^N\vec{\Xi}_j$ which satisfies
$$
\CL\left(\sum_{j=0}^N\vec{\Xi}_j\right)=(0,f_u,f_v)+\vec{\CE}_N.
$$
Here $\vec{\CE}_N$ is an error term at this step. Provided that $\vec{\CE}_N$ is smooth enough and has zero value at its first component, we can introduce a corrector
$$\vec{\Xi}_{N+1}=-L_Q^{-1}( \vec{\CE}_N )+L_S^{-1}\circ E_Q\circ L_Q^{-1}(\vec{\CE}_N ),
$$
where $L_Q^{-1}$ and $L_S^{-1}$  denote respectively the solution operators to quasi-compressible and Stokes approximate systems. The approximate solution at the $N+1$-step is therefore defined by $\sum_{j=0}^{N+1}\vec{\Xi}_j$. Then we have
$$
\begin{aligned}
\CL\left(\sum_{j=1}^{N+1} \vec{\Xi}_j\right)&=(0,f_u,f_v)+\vec{\CE}_{N+1}\\
&:=(0,f_u,f_v)+E_S\circ L_S^{-1}\circ E_Q \circ L_Q^{-1}(\vec{\CE}_N).
\end{aligned}
$$
A combination of the smallness of $E_Q$, the regularizing effect of $L_S^{-1}$ and the strong decay property of $E_S$ yields  the  contraction in $H^1_w(\mathbb{R}_+)$ of truncated error operator $E_S\circ L_S^{-1}\circ E_Q \circ L_Q^{-1}$ so that the convergence of series $\sum_{j=1}^\infty \vec{\Xi}_j$ in $H^1(\mathbb{R}_+)\times H^2(\mathbb{R}_+)^2$ follows, cf. the proof of Proposition \ref{prop3.0}.
\end{itemize}
\vspace{0.3cm}

\underline{Step. 3. Recovery of the no-slip boundary condition.} We look for  solutions to the original system \eqref{1.4} with $v|_{Y=0}=0$ in the form of $(\rho,u,v)=(\rho_{\text{app}},u_{\text{app}},v_{\text{app}})-(\rho_{R},u_{R},v_{R})$. Here the remainder $(\rho_R,u_R,v_R)$ solves $\CL(\rho_R,u_R,v_R)=E, v_R|_{Y=0}=0$, where $E$ is the error due to the approximation $(\rho_{\text{app}},u_{\text{app}},v_{\text{app}})$. In this step, we need to decompose the error into  regular and smallness parts as in  \eqref{2.4.1}. The remainder is divided accordingly  into $\vec{\Xi}_R=\vec{\Xi}_{\text{re}}+\vec{\Xi}_{\text{sm}}.$ The reason for such  decomposition is that the regular part $E_{\text{re}}$ coming from the rough approximation to Rayleigh equation has a worse bound than the smallness part so that we can
compensate some extra order of $\vep$ by the favorable bounds of $\vec{\Xi}_{\text{re}}$ due to strong decay and $H^1$-regularity of $E_{\text{re}}$, cf.  Proposition \ref{prop3.0}.
Eventually, we can prove that $|u_R(0;c)|\leq C\vep^{\f1{16}}$ on $\pa D_0$, which is smaller than $|u_{\text{app}}(0;c)|$. Then we conclude Theorem \ref{thm1.1}
by Rouch\'e's Theorem.\\

The rest of the paper is organized as follows. In  the next section, we will construct the approximate growing mode. In Section 3, we will show 
the solvability of the linearized system \eqref{1.4} with zero normal velocity condition in order to resolve the remainder due to the approximation. The proof is divided into  several steps. Firstly, two approximate systems, that is, Quasi-compressible and Stokes approximations will be  introduced in Sections 3.1 and 3.2 respectively. Based on these two systems, the iteration scheme will be analyzed in Section 3.3.
The proof of Theorem  \ref{thm1.1} will be given in the final section. In the Appendix, we will
give the proof of the  invertibility of operator $\Lambda$ that  is used in the construction of solution to the equation \eqref{3.1.4-1}.\\

In the paper, for any $z\in \mathbb{C}\setminus \mathbb{R}_-$,  we take the principle analytic branch of $\log z$ and $z^k, k\in (0,1)$, i.e.
$$\log z\triangleq Log|z|+i\text{Arg}z,~ z^k\triangleq|z|^ke^{ik \text{Arg} z},~\text{Arg}z\in (-\pi,\pi].$$

{\bf Notations:} Throughout the paper,  $C$ denotes a generic positive constant and  $C_a$ means that the generic  constant depending on $a$. These constants may vary from line to line.  $A\lesssim B$ and $A=O(1)B$ mean that  there exists a generic constant $C$ such that $A\leq CB$. And $A\lesssim_a B$ implies that the constant $C$ depends on $a$. Similar definitions hold for   $A\gtrsim B$ 
 and $A\gtrsim_a B$.   
  Moreover, we use notation $A\approx B$ if $A\lesssim B$ and $A\gtrsim B$. $\|\cdot\|_{L^2}$  and $\|\cdot\|_{L^\infty}$ 
  denote the standard $L^2(\mathbb{R}_+)$ and    $L^\infty(\mathbb{R}_+)$ norms respectively. 
For any $\eta>0$,  $L^\infty_\eta(\mathbb{R}_+)$ denotes the weighted Lebesgue space 
with the  norm  $\|f\|_{L^\infty_\eta}\triangleq \sup_{Y\in \mathbb{R}_+}\left|e^{\eta Y}f(Y)\right|$. And the weighted Sobolev space $W^{k,\infty}_\eta(\mathbb{R}_+)$ $(k\in \mathbb{N})$ has the norm 
$\|f\|_{W^{k,\infty}_\eta}=\sum_{j\leq k}\|\pa_Y^j f\|_{L^\infty_\eta}$.

\section{Approximate growing mode}

In the following three subsections, we will construct the approximate growing mode that satisfies the no-slip boundary condition.
As for the incompressible Navier-Stokes equations, it is based on the superposition of the slow mode
and the fast mode that represent the interaction of the inviscid and viscous effects near the boundary.

\subsection{Slow mode} 
In this subsection, we will construct the slow mode to capture the inviscid behavior. For this, we consider  the following system denoted by $\CI(\rho,u,v)=\vec{0}$: 
\begin{equation}
\left\{
\begin{aligned}\label{2.1.1}
&i\a(U_s-c)\rho+\dv(u,v)=0,\\
&i\a(U_s-c)u+\cm^{-2}i\a\rho+v\pa_YU_s=0,\\
&i\a(U_s-c)v+\cm^{-2}\pa_Y\rho=0.
\end{aligned}\right.
\end{equation}
By introducing  a new function $\Phi=\f{i}{\a}v$, from $\eqref{2.1.1}_1$, we have
\begin{align}\label{2.1.2}
u=\pa_Y\Phi-(U_s-c)\rho.
\end{align}
Substituting this into $\eqref{2.1.1}_2$ yields
\begin{align}
-\cm^{-2}A(Y)\rho=(U_s-c)\pa_Y\Phi-\Phi\pa_YU_s,\nonumber
\end{align}
where 
\begin{align}\label{2.1.4}
A(Y)\eqdef 1-\cm^2(U_s-c)^2.
\end{align}
Note that for  the {\it uniformly subsonic} boundary layer, i.e. $\cm\in (0,1)$,  when 
$|c|\ll1$, $A(Y)$ is invertible so that  we can represent $\rho$ in terms of $\Phi$ by 
\begin{align}\label{2.1.3}
\rho=-\cm^2A^{-1}(Y)\left[ (U_s-c)\pa_Y\Phi-\Phi\pa_YU_s\right].
\end{align}
Plugging \eqref{2.1.3} into $\eqref{2.1.1}_3$, we derive the following equation for $\Phi$, which can be viewed as an analogy of the classical Rayleigh equation  in the  compressible setting:
\begin{align}\label{2.1.5}
\text{Ray}_{\text{CNS}}\eqdef\pa_Y\left\{ A^{-1}\left[ (U_s-c)\pa_Y\Phi-\Phi\pa_YU_s \right]\right\}-\a^2(U_s-c)\Phi=0.
\end{align}
We remark that the equation \eqref{2.1.5} was firstly derived  by Lee and Lin in \cite{LL} for the study of stability of shear flow in  inviscid fluid. Thus \eqref{2.1.5} is sometimes referred to as Lee-Lin equation. 

The slow mode will be constructed based on an approximate solution to \eqref{2.1.5}. 
Since  \eqref{2.1.5} has similar structure of the Rayleigh equation,  the construction is similar as \cite{GGN1,LYZ} for incompressible flow. In what follows we sketch 
the key steps to make the paper to be self-contained.

Starting from $\a=0$, the equation \eqref{2.1.5} admits following two independent solutions
\begin{align}
\varphi_{+}(Y)=(U_s-c),~\varphi_-(Y)=(U_s-c)\int_1^Y \frac{1}{(U_s(X)-c)^2}\dd X-\cm^2(U_s-c)Y,~\text{for  }\text{Im}c>0.\nonumber
\end{align}
For $\a>0$, to capture the decay property of the solution, we set
\begin{align}
\beta\eqdef \a A_\infty^{\f12},~\text{where }A_\infty=\lim_{Y\rightarrow +\infty}A(Y)= 1-\cm^2(1-c)^2= 1-\cm^2+O(1)|c|,~
\text{for }|c|\ll 1. \label{beta} 
\end{align}
Then we define
\begin{align}
\varphi_{+,\a}(Y)=e^{-\beta Y}\varphi_+(Y),~\varphi_{-,\a}(Y)=e^{-\beta Y}\varphi_-(Y).\label{2.1.6}
\end{align}
Direct computation yields the following error terms:
\begin{align}
\text{Ray}_{\text{CNS}}\left(\varphi_{+,\a}\right)&= -2\beta A^{-2} \pa_YU_s\varphi_{+,\a} +A^{-1}(U_s-c)(\beta^2-\a^2A)\varphi_{+,\a},\label{2.1.7}\\
\text{Ray}_{\text{CNS}}\left(\varphi_{-,\a}\right)&= - 2\beta A^{-2}\pa_YU_s\varphi_{-,\a} +A^{-1}(U_s-c)(\beta^2-\a^2A)\varphi_{-,\a}-2\beta e^{-\beta Y}.\label{2.1.8}
\end{align}
To have  a better approximate solution for \eqref{2.1.5} up to $O(\a^2)$,  the following approximate Green's function is needed:
\begin{equation}
G_\a(X,Y)\eqdef -(U_s(X)-c)^{-1}
\left\{
\begin{aligned}
&e^{-\beta(Y-X)}\varphi_+(Y)\varphi_{-}(X),~X<Y,\\
&e^{-\beta(Y-X)}\varphi_{+}(X)\varphi_{-}(Y),~X>Y.\nonumber
\end{aligned}\right.
\end{equation}
Then we define  a corrector 
\begin{align}\label{2.1.9}
\varphi_{1,\a}(Y)\eqdef 2\int_0^\infty G(X,Y)A^{-2}(X)\pa_YU_s(X)\varphi_{+,\a}(X)\dd X,
\end{align}
and set
\begin{align}\label{2.1.10}
\Phi_{\text{app}}^s(Y;c)\eqdef \varphi_{+,\a}+\beta \varphi_{1,\a}.
\end{align}
Hence, by \eqref{2.1.7} and \eqref{2.1.8}, we have
\begin{align}\label{2.1.11}
\text{Ray}_{\text{CNS}}\left(\Phi_{\text{app}}^s \right)=&-2\beta^2A^{-2}U_s'\varphi_{1,\a}+4\beta^2e^{-\beta Y}\int_Y^\infty A^{-2}(X)U_s'(X)\varphi_{+}(X)\dd X\nonumber\\
&+A^{-1}(U_s-c)(\beta^2-\a^2 A)\Phi_{\text{app}}^s\nonumber\\
=& O(1)\a^2|\pa_YU_s|.
\end{align}
In summary, $\Phi_{\text{app}}^s$ is the slow mode with properties given in the following lemma.

\begin{lemma}\label{lm2.1}
	Let the Mach number $\cm\in (0,1).$ Then  for each $Y\geq0$, $\Phi_{\text{app}}^s(Y;c)$ is holomophic in the upper-half  plane $\{c\in \mathbb{C}\mid\text{Im}c>0\}$. Moreover, there exists $\g_1\in (0,1)$, such that if $\text{Im}c>0$ and $|c|<\g_1$, we have the expansions:
	\begin{align}
	\Phi_{\text{app}}^s(0;c)&=-c+\frac{\a}{(1-\cm^2)^{\f12}}+O(1)\a|c\log\text{Im}c|,\label{2.1.12}\\
	\pa_Y\Phi_{\text{app}}^s(0;c)&=1+O(1)\a|\log\text{Im}c|.\label{2.1.13}
	\end{align} 
\end{lemma}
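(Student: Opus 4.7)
The plan is to compute $\Phi_{\text{app}}^s(0;c)$ and $\partial_Y \Phi_{\text{app}}^s(0;c)$ directly from the explicit decomposition $\Phi_{\text{app}}^s = \varphi_{+,\alpha} + \beta \varphi_{1,\alpha}$. Holomorphy in $c$ on the upper half-plane is straightforward: $\varphi_{+,\alpha}(Y)=e^{-\beta Y}(U_s(Y)-c)$ is manifestly holomorphic (with the principal branch used in $\beta=\alpha A_\infty^{1/2}$, noting that $A_\infty = 1-\cm^2+O(|c|)$ stays in $\mathbb{C}\setminus(-\infty,0]$ for $|c|$ small), the integrand of $\varphi_-(Y)$ has no real singularity when $\text{Im}c>0$, and $\varphi_{1,\alpha}(Y)$ is an absolutely convergent integral thanks to the exponential decay $\partial_Y U_s(X)\lesssim e^{-s_0 X}$ from \eqref{A2}, whose integrand is holomorphic in $c$, so Morera's theorem applies.

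For the boundary values, $U_s(0)=0$ and $U_s'(0)=1$ give $\varphi_{+,\alpha}(0)=-c$ and $\partial_Y \varphi_{+,\alpha}(0)=1+\beta c$. At $Y=0$ only the $X>Y$ branch of $G_\alpha$ contributes, and it collapses to $G_\alpha(X,0)=-e^{\beta X}\varphi_-(0)$. Substituting into \eqref{2.1.9} and differentiating the same branch in $Y$ gives
\[
\varphi_{1,\alpha}(0) = -2\varphi_-(0)\,J(c), \qquad \partial_Y \varphi_{1,\alpha}(0) = -2\,[-\beta\varphi_-(0)+\varphi_-'(0)]\,J(c),
\]
where $J(c)\eqdef \int_0^\infty A^{-2}(X) U_s'(X)(U_s(X)-c)\,dX$. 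The change of variable $s=U_s(X)$ at $c=0$ evaluates $J(0)=\int_0^1 s(1-\cm^2 s^2)^{-2}\,ds = [2(1-\cm^2)]^{-1}$, and smoothness in $c$ yields $J(c)=J(0)+O(|c|)$.

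The main obstacle is the expansion of $\varphi_-(0)=c\int_0^1(U_s-c)^{-2}dX$ (and of $\varphi_-'(0)$) as $c\to 0$, since the integrand has a pole close to the real axis. Using $U_s(X)=X+O(X^2)$ near $0$ together with the lower bound $|U_s(X)-c|^2\gtrsim X^2+(\text{Im}c)^2$ (which follows from monotonicity of $U_s$), one splits the integral into the range $|X|\lesssim |c|$, which contributes $O(1)$, and $|X|\gtrsim |c|$, which is computed by antiderivative up to a logarithmic error, to obtain
\[
\int_0^1 \frac{dX}{(U_s(X)-c)^2} = -\frac{1}{c} + O(|\log\text{Im}c|).
\]
Consequently $\varphi_-(0) = -1 + O(|c\log\text{Im}c|)$, and combining with the explicit formula $\varphi_-'(0) = -\int_0^1(U_s-c)^{-2}dX - 1/c + \cm^2 c$ yields $\varphi_-'(0) = O(|\log\text{Im}c|)$.

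Putting these together with $\beta = \alpha\sqrt{1-\cm^2}+O(\alpha|c|)$:
\[
\beta\varphi_{1,\alpha}(0) = -2\beta \varphi_-(0) J(c) = \frac{\alpha}{\sqrt{1-\cm^2}} + O(\alpha|c\log\text{Im}c|),
\]
which combined with $\varphi_{+,\alpha}(0)=-c$ yields \eqref{2.1.12}. For \eqref{2.1.13}, the terms $\beta^2\varphi_-(0)J(c)=O(\alpha^2)$ and $\beta\varphi_-'(0)J(c)=O(\alpha|\log\text{Im}c|)$, together with the $\beta c = O(\alpha|c|)$ correction in $\partial_Y\varphi_{+,\alpha}(0)$, produce $\partial_Y\Phi_{\text{app}}^s(0;c)=1+O(\alpha|\log\text{Im}c|)$, as claimed.
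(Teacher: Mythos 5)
Your computation follows essentially the same route as the paper: evaluate $\varphi_{1,\a}$ and its derivative at $Y=0$ from the Green's function representation, reduce everything to the single integral $J(c)=\int_0^\infty A^{-2}U_s'(U_s-c)\,\dd X$ and to the asymptotics of $\varphi_-(0)$, and then insert $\beta=\a(1-\cm^2)^{1/2}+O(\a|c|)$. Your identities check out: $G_\a(X,0)=-e^{\beta X}\varphi_-(0)$, $\varphi_{1,\a}(0)=-2\varphi_-(0)J(c)$, and $\pa_Y\varphi_{1,\a}(0)=-2[-\beta\varphi_-(0)+\varphi_-'(0)]J(c)$ agree with the paper's \eqref{2.1.15} and the formula preceding \eqref{2.1.17} via $\varphi_-'(0)=-c^{-1}(\varphi_-(0)+A(0))$. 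Your evaluation $J(0)=[2(1-\cm^2)]^{-1}$ by the substitution $s=U_s(X)$ reproduces what the paper gets by recognizing the integrand as $\tfrac{1}{2}\cm^{-2}\pa_X(A^{-1})$, and $J(c)=J(0)+O(|c|)$ is legitimate since $\pa_c$ of the integrand is $O(e^{-s_0X})$ uniformly for small $|c|$.

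The one genuine flaw is in your justification of $\int_0^1(U_s-c)^{-2}\dd X=-c^{-1}+O(|\log\text{Im}c|)$, which is the only ingredient the paper does not prove itself (it cites Lemma 3.1 of \cite{LYZ} for the equivalent statement $\varphi_-(0)=-1+O(|c\log\text{Im}c|)$). The claimed pointwise bound $|U_s(X)-c|^2\gtrsim X^2+(\text{Im}c)^2$ is false when $\text{Re}\,c>0$: at the critical point $X_c$ with $U_s(X_c)=\text{Re}\,c$ one has $|U_s(X_c)-c|=\text{Im}c$, which can be far smaller than $X_c\approx\text{Re}\,c$; monotonicity of $U_s$ does not give what you assert. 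The correct and standard substitute is $|U_s(X)-c|\gtrsim |X-X_c|+\text{Im}c$ (from $U_s'\approx 1$ on the relevant interval), with the splitting of the integral centered at $X_c$ rather than at $0$; alternatively, integrate by parts using $(U_s-c)^{-2}=-\,(U_s')^{-1}\pa_X\big[(U_s-c)^{-1}\big]$, which produces the boundary term $-c^{-1}$ exactly and leaves $\int_0^1 U_s''(U_s')^{-2}(U_s-c)^{-1}\dd X=O(|\log\text{Im}c|)$. With that repair, the remaining steps — $\varphi_-(0)=-1+O(|c\log\text{Im}c|)$, $\varphi_-'(0)=O(|\log\text{Im}c|)$, and the assembly of \eqref{2.1.12}--\eqref{2.1.13} — are all correct.
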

\begin{proof}
Since $\text{Im}c>0$, $U_s(Y)-c\neq0,  ~\forall Y\geq0$. The analyticity of $\Phi_{\text{app}}^s$ follows from the explicit formula \eqref{2.1.6}, \eqref{2.1.9} and \eqref{2.1.10}. Now we derive the boundary values $\Phi_{\text{app}}^s(0;c) $ and $\pa_Y\Phi_{\text{app}}^s(0;c).$ 

Firstly, note  that
\begin{align}\label{2.1.14}
\varphi_{1,\a}(Y;c)=&-2e^{-\beta Y}\varphi_+(Y)\int_0^Y\varphi_-(X)A^{-2}(X)\pa_YU_s(X)\dd X\nonumber\\
&-2e^{-\beta Y}\varphi_-(Y)\int_Y^\infty\varphi_+(X)A^{-2}(X)\pa_YU_s(X)\dd X.
\end{align}
Then
\begin{align}\label{2.1.15}
\varphi_{1,\a}(0;c)&=-2\varphi_-(0)\int_0^\infty A^{-2}(X)(U_s-c)\pa_YU_s(X)\dd X=-\f{\varphi_-(0)}{\cm^2}\int_0^\infty\f{\dd }{\dd X}(A^{-1})\dd X\nonumber\\
&=-\frac{\varphi_-(0)}{\cm^2}\left( A^{-1}(+\infty)-A^{-1}(0)\right)=\frac{-\varphi_-(0)(1-2c)}{[1-\cm^2(1-c)^2][1-\cm^2c^2]}\nonumber\\
&=-{\varphi_-(0)}\left(\f{1}{1-\cm^2}+O(1)|c|\right),~\text{for }|c|\ll 1.
\end{align}
Then by using \eqref{2.1.10} and the fact that $\beta=\a[(1-\cm^2)^{\f12}+O(1)|c|]$, 
one has
\begin{align}\label{2.1.16}
\Phi_{\text{app}}^s(0;c)=\varphi_{+,\a}(0,c)+\beta\varphi_{1,\a}(0,c)=-c-\a\varphi_-(0)\left(\frac{1}{(1-\cm^2)^{\f12}}+O(1)|c|\right).
\end{align}

To estimate  the boundary value $\pa_Y\Phi_{\text{app}}^s(0;c)$, differentiating \eqref{2.1.14} yields that
\begin{align}
\pa_Y\varphi_{1,\a}(Y;c)=&-\beta\varphi_{1,\a}(Y;c)+U_s'(Y)(U_s-c)^{-1}\varphi_{1,\a}(Y;c)\nonumber\\
&-2e^{-\beta Y}A(Y)(U_s-c)^{-1}\int_Y^\infty \varphi_+(X)A^{-2}(X)\pa_YU_s(X)\dd X.\nonumber
\end{align}
Similar to \eqref{2.1.15}, by using $U_s'(0)=1$, we obtain
\begin{align}
\nonumber
\pa_Y\varphi_{1,\a}(0;c)&=-\beta\varphi_{1,\a}(0;c)-c^{-1}\varphi_{1,\a}(0;c)+2c^{-1}A(0)\int_0^\infty\varphi_+(X)A^{-2}(X)\pa_YU_s(X)\dd X\\
\nonumber&=-\beta\varphi_{1,\a}(0;c)-c^{-1}\varphi_{1,\a}(0;c)+c^{-1}A(0)\left(\frac{1}{1-\cm^2}+O(1)|c|\right)\\
\nonumber&=\frac{1}{c(1-\cm^2)}\left( 1+\varphi_{-}(0)\right)+O(1)(1+|\varphi_-(0)|).
\end{align}
Here we have  used \eqref{2.1.15} in the last identity. Consequently, it holds that
\begin{align}\label{2.1.17}
\pa_Y\Phi_{\text{app}}^s(0;c)&=\pa_Y\varphi_{+,\a}(0;c)+\beta\pa_Y\varphi_{1,\a}(0;c)\nonumber\\
&=1+\frac{\a}{c(1-\cm^2)^{\f12}}(1+\varphi_{-}(0))+O(1)|\a|(1+|\varphi_-(0)|).
\end{align}
Finally, we have $\varphi_-(0)=-1+O(1)|c\log\text{Im}c|,$ cf. Lemma 3.1 in \cite{LYZ}. Then by substituting 
 this into \eqref{2.1.16} and \eqref{2.1.17}, we obtain \eqref{2.1.12} and \eqref{2.1.13}. The proof of Lemma \ref{lm2.1} is completed.
\end{proof}

With $\Phi^s_{\text{app}}$, we can  define the slow mode of fluid quantities $\vec{\Xi}_{\text{app}}^s\eqdef(\rho_{\text{app}}^s,u_{\text{app}}^s,v_{\text{app}}^s)$ 
by using  \eqref{2.1.2} and \eqref{2.1.3} as follows:
\begin{align}
v_{\text{app}}^s=-i\a\Phi_{\text{app}}^s,~\rho_{\text{app}}^s=-\cm^2A^{-1}\left[ (U_s-c)\pa_Y\Phi_{\text{app}}^s-\Phi_{\text{app}}^s\pa_YU_s\right],~u_{\text{app}}^s=\pa_Y\Phi_{\text{app}}^s-(U_s-c)\rho_{\text{app}}^s.\label{2.1.18}
\end{align}
One can directly check that $\vec{\Xi}_{\text{app}}^s$ satisfies
\begin{align}\label{2.1.19}
\CI(\vec{\Xi}_{\text{app}}^s)=(0,0,\text{Ray}_{\text{CNS}}(\Phi_{\text{app}}^s)),
\end{align}
where the error $\text{Ray}_{\text{CNS}}(\Phi_{\text{app}}^s)$ is given in \eqref{2.1.11}. Therefore,  $\vec{\Xi}_{\text{app}}^s$ is an approximate solution to
the  inviscid equation \eqref{2.1.1} up to $O(\a^2)$.

\subsection{Fast mode} To capture the viscous effect of \eqref{1.4} in the boundary layer, 
we need to construct a boundary sublayer corresponding to the fast mode in the approximate
solution. Let $z\eqdef\delta^{-1}Y$. Here $0<\delta\ll 1$ is the scale of boundary sublayer which will be determined later. Now we scale the density and velocity fields in the sublayer by setting
\begin{align}\label{2.2.1-7}
\cp(z)=\rho(Y),~~\mathcal{U}(z)=u(Y),~~\mathcal{V}(z)=(i\a\delta)^{-1}v(Y).
\end{align}
This leads to the following rescaled system associated to \eqref{1.4}:
\begin{align}
&(U_s-c)\cp+\mathcal{U}+\pa_z\mathcal{V}=0,\label{2.2.1-1}\\
&\pa_z^2\mathcal{U}-in\delta^2(U_s-c)\mathcal{U}-in\delta^3U_s'\mathcal{V}-(\cm^{-2}in\delta^2+\delta^2\pa_Y^2U_s)\cp-\a^2\delta^2\left[(1+\lambda)\mathcal{U}+\lambda\pa_z\mathcal{V} \right]=0,\label{2.2.1-2}\\
&\pa_z^2\mathcal{V}-in\delta^2(U_s-c)\mathcal{V}-\a^2\delta^2\mathcal{V}+\lambda\pa_z(\mathcal{U}+\pa_z\mathcal{V})+i\cm^{-2}\vep^{-\f12}\a^{-1}\pa_z\cp=0.\label{2.2.1-3}
\end{align}
Here the constant $n\eqdef\frac{\a}{\sqrt{\vep}}$ which is   the rescaled frequency.
Recalling $U_s'(0)=1,$ we can rewrite $U_s(Y)-c$ as
\begin{align}\label{2.2.1-4}
U_s(Y)-c=U_s'(0)Y-c+[U_s(Y)-U_s'(0)Y]=\delta(z+z_0)+O(1)|\delta|^2|z|^2,
\end{align}
where  $z_0\eqdef -\delta^{-1}c$, and 
\begin{align}
\label{2.2.1-5}
U_s'(Y)=U_s'(0)+U_s'(Y)-U_s'(0)=1+O(1)|\delta||z|.
\end{align}
In view of \eqref{2.2.1-1}-\eqref{2.2.1-5}, it is natural to set 
$$\delta=e^{-\f16\pi i}n^{-\f13},
$$
so that $in\delta^3=1.$  Formally, we have the expansion
\begin{align}\nonumber
\cp=\cp_0+\delta\cp_1+\cdots,~~\mathcal{U}=\mathcal{U}_0+\delta\mathcal{U}_1\cdots,~~\mathcal{V}=\mathcal{V}_0+\delta\mathcal{V}_1\cdots.
\end{align}
Inserting this expansion into \eqref{2.2.1-1}-\eqref{2.2.1-3} and taking the leading order, we can derive the following system for $(\cp_0,\mathcal{U}_0,\mathcal{V}_0)$
\begin{align}
\cp_0(z)=\mathcal{U}_0(z)+\pa_z\mathcal{V}_0(z)&=0, \label{2.2.2}\\
\pa_z^2\mathcal{U}_0(z)-(z+z_0)\mathcal{U}_0(z)-\mathcal{V}_0(z)&=0,\label{2.2.4}
\end{align}
where the variable $z$ lies in the segment $e^{\f{1}{6}\pi i}\mathbb{R}_+$. From \eqref{2.2.2}, we
 observe that the leading order terms of the
 density and divergence of velocity field vanish in the sublayer.
We also require $(\mathcal{U}_0,\mathcal{V}_0)$
to concentrate near the boundary, that is,
\begin{align}
\lim_{z\rightarrow \infty,z\in e^{\f16\pi i}\mathbb{R}_+}(\mathcal{U}_0,\mathcal{V}_0)=\vec{0}.\nonumber
\end{align}
Differentiating \eqref{2.2.4}, by  \eqref{2.2.2}, we obtain
\begin{align}
\pa_z^4\mathcal{V}_0-(z+z_0)\pa_z^2\mathcal{V}_0=0.\label{2.2.6}
\end{align}
Therefore, from \eqref{2.2.2} and \eqref{2.2.6}  we have 
\begin{align}\nonumber
\mathcal{U}_0(z)= -\frac{\text{Ai}(1,z+z_0)}{\text{Ai}(2,z_0)},~\mathcal{V}_0(z)=\frac{\text{Ai}(2,z+z_0)}{\text{Ai}(2,z_0)}.
\end{align}
Here $\text{Ai}(1,z)$ and $\text{Ai}(2,z)$ are respectively the first and second order primitives of the classical Airy function $\text{Ai}(z)$ which is the solution to Airy equation $$\pa_z^2\text{Ai}-z\text{Ai}=0.$$
$\text{Ai}(2,z),$ $\text{Ai}(1,z)$ and $\text{Ai}(z)$ all vanish at infinity along $e^{\f{1}{6}\pi i}\mathbb{R}_+$. They satisfy the relations $\pa_z\text{Ai}(k,z)=\text{Ai}(k-1,z)$, $k=1,2,$ where $\text{Ai}(0,z)\equiv\text{Ai}(z)$. For the detailed construction of these profiles, we refer to \cite{GMM1}. 

Finally, by rescaling the leading order  profile $(\cp_0,\mathcal{U}_0,\mathcal{V}_0)$ via \eqref{2.2.1-7}, we  define the fast mode as 
\begin{align}\label{2.2.1}
\vec{\Xi}_{\text{app}}^f=(\rho_{\text{app}}^f,u_{\text{app}}^f,v_{\text{app}}^f)(Y)\eqdef (0,\mathcal{U}_0,i\a \delta\mathcal{V}_0)(\delta^{-1}Y).
\end{align}
Obviously, 
\begin{align}\label{2.2.8}
u_{\text{app}}^f(0;c)=-\frac{\text{Ai}(1,z_0)}{\text{Ai}(2,z_0)},~v_{\text{app}}^f(0;c)=i\a\delta.
\end{align}

\subsection{Approximate growing mode}
Based on slow and fast modes constructed in the above two subsections, we
are now ready to  construct  an approximate growing mode to \eqref{1.4} with boundary condition \eqref{1.4-1}. Set 
\begin{align}\label{2.3.1}
\vec{\Xi}_{\text{app}}(Y;c)&=(\rho_{\text{app}},u_{\text{app}},v_{\text{app}})(Y;c)\eqdef \vec{\Xi}_{\text{app}}^s(Y;c)-\frac{v^{s}_{\text{app}}(0;c)}{v^{f}_{\text{app}}(0;c)}\vec{\Xi}_{\text{app}}^f(Y;c)\nonumber\\
&= \vec{\Xi}_{\text{app}}^s(Y;c)+\delta^{-1}\Phi_{\text{app}}^s(0;c)\vec{\Xi}_{\text{app}}^f(Y;c),
\end{align}
where  $\vec{\Xi}_{\text{app}}^s$ , $\vec{\Xi}_{\text{app}}^f$  are defined in \eqref{2.1.18}, \eqref{2.2.1} respectively, and the function $\Phi_{\text{app}}^s(Y;c)$ is defined in \eqref{2.1.10} with boundary data satisfying \eqref{2.1.12} and \eqref{2.1.13}. Thanks to \eqref{2.2.8}, the normal velocity $v_{\text{app}}$ satisfies the zero boundary condition, that is,  $v_{\text{app}}(0;c)\equiv0.$ Therefore, the approximate solution \eqref{2.3.1} satisfies the full no-slip boundary condition \eqref{1.4-1} if and only if
 the following function vanishes at some point $c$:
\begin{align}\nonumber
\mathcal{F}_{\text{app}}(c)\eqdef u_{\text{app}}(0;c)=\pa_Y\Phi_{\text{app}}^s(0;c)+c\rho_{\text{app}}^s(0;c)-\delta^{-1}\Phi_{\text{app}}^s(0;c)\frac{\text{Ai}(1,z_0(c))}{\text{Ai}(2,z_0(c))}.
\end{align}

To show  the existence of such $c$ for $\CF_{\text{app}}(c)=0$, we consider the Mach number $\cm\in (0,1)$
and the wave number $\a=K\vep^{\f18}$ with $K\geq 1$ being a large but fixed real number. Set 
\begin{align}\label{2.3.3}
c_0\eqdef \left(\frac{K}{(1-\cm^2)^{\f12}}+K^{-1}(1-\cm^2)^{\f14}e^{\f{1}{4}\pi i}\right)\vep^{\f18}
\end{align}
and define a disk centered at $c_0$ by 
\begin{align}\label{2.3.4}
D_{0}\eqdef\left\{c\in 
\mathbb{C}\mid|c-c_0|\leq K^{-1-\theta}(1-\cm^2)^{\f14}\vep^{\f18}\right\},
\end{align}
with some constant $\theta\in (0,1)$. Clearly, for any $ \cm\in (0,1)$, there exists a positive constant $\tau_0>0$ ($\tau_0\rightarrow 0$ as $\cm\rightarrow 1$), such that for sufficiently large $K$,  the following estimates hold for any $c\in D_0$: \begin{align}\label{c}
\text{Im}c\geq \tau_0K^{-1}\vep^{\f18},~0<\text{arg}c<\tau_0K^{-2},~\text{and }\frac{K(1-\tau_0K^{-2})}{(1-\cm^2)^{\f12}}\vep^{\f18}\leq |c|\leq \frac{K(1+\tau_0K^{-2})}{(1-\cm^2)^{
\f12}}\vep^{\f18}.
\end{align}

With the above preparation, we will prove the following proposition about the existence
of approximate growing mode.

\begin{proposition}\label{prop2.1}
Let $\cm\in (0,1)$.	There exists a positive constant $K_0>1$, such that if $K\geq K_0$, then there exists $\vep_1\in (0,1)$, such that for $\a=K\vep^{\f18}$ and $c\in D_0$ with $\vep\in (0,\vep_1)$, the function $\CF_{\text{app}}(c)$ has a unique zero point in $D_0$. Moreover, on the circle $\pa D_0,$ it holds that
	\begin{align}\label{2.3.5}
	|\mathcal{F}_\text{app}(c)|\geq \frac{1}{2}K^{-\theta}.
	\end{align} 
\end{proposition}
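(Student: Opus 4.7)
The plan is to apply Rouch\'e's theorem by comparing $\CF_{\text{app}}$ to an affine reference function $\CF_{\text{ref}}$ whose unique root coincides, to leading order, with $c_0$. First, I would insert the expansions from Lemma \ref{lm2.1} together with the identity $\rho_{\text{app}}^s(0;c) = \cm^2 A^{-1}(0)[c\,\pa_Y\Phi_{\text{app}}^s(0;c) + \Phi_{\text{app}}^s(0;c)\pa_Y U_s(0)]$ coming from \eqref{2.1.18}. Using $A(0) = 1 + O(|c|^2)$, $\pa_Y U_s(0) = 1$, and $|c| \lesssim \vep^{1/8}$, the contribution $c\,\rho_{\text{app}}^s(0;c)$ is $O(\vep^{1/4})$ and therefore subleading. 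The leading structure of $\CF_{\text{app}}(c)$ is thus
\[
\CF_{\text{app}}(c) = 1 - \delta^{-1}\Big(-c + \tfrac{\a}{(1-\cm^2)^{1/2}}\Big)\frac{\text{Ai}(1,z_0(c))}{\text{Ai}(2,z_0(c))} + \text{l.o.t.},
\]
with sub-leading remainder controlled by $\a|\log\text{Im}\, c|$ and the $c\rho_{\text{app}}^s$ term.

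Second, I would analyze the Airy ratio on $D_0$. Using $z_0(c) = -\delta^{-1} c$, $\delta = e^{-i\pi/6} n^{-1/3}$ and \eqref{c}, one has $|z_0(c)| \approx K^{4/3}(1-\cm^2)^{-1/2}$, which is large, with $\arg z_0$ lying in a fixed sector; moreover, the relative variation $|z_0(c) - z_0(c_0)|/|z_0(c_0)| = O(K^{-2-\theta})$ is negligible on $D_0$. Classical uniform asymptotics for the Airy primitives (a Tietjens-type expansion) give an explicit leading term of $\text{Ai}(1,z_0)/\text{Ai}(2,z_0)$ with relative error $O(|z_0|^{-1})$. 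This motivates
\[
\CF_{\text{ref}}(c) := 1 - \delta^{-1}\Big(-c + \tfrac{\a}{(1-\cm^2)^{1/2}}\Big)\frac{\text{Ai}(1,z_0(c_0))}{\text{Ai}(2,z_0(c_0))},
\]
which is affine in $c$. Solving $\CF_{\text{ref}}(c)=0$ for its unique root and matching the modulus and argument of the resulting complex shift against $c_0$ in \eqref{2.3.3} identifies the summand $K^{-1}(1-\cm^2)^{1/4}e^{i\pi/4}\vep^{1/8}$, where the $e^{i\pi/4}$ phase arises as $\arg\delta^{-1} = \pi/6$ plus the explicit phase coming from the leading Airy coefficient.

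Third, on $\pa D_0$ the modulus of the slope of $\CF_{\text{ref}}$ is of order $|\delta^{-1}|\cdot|z_0|^{1/2} \approx K^{1/3}\vep^{-1/8}\cdot K^{2/3}(1-\cm^2)^{-1/4} = K(1-\cm^2)^{-1/4}\vep^{-1/8}$, so that
\[
|\CF_{\text{ref}}(c)| = |\text{slope}|\cdot|c - c_0| \gtrsim K^{-\theta}, \quad c \in \pa D_0.
\]
The error $|\CF_{\text{app}}(c) - \CF_{\text{ref}}(c)|$ is bounded by the sum of: the $O(\a|\log\vep|) = O(\vep^{1/8}|\log\vep|)$ error from Lemma \ref{lm2.1}; the $O(\vep^{1/4})$ contribution from $c\rho_{\text{app}}^s$; the relative $O(K^{-4/3})$ error in the Airy asymptotic multiplied by the leading term; and the $O(K^{-2-\theta})$ variation of the Airy ratio across $D_0$. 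For $K \geq K_0$ large and $\vep \leq \vep_1$ small, each is strictly less than $\tfrac{1}{2}K^{-\theta}$, so Rouch\'e's theorem applied on $D_0$ yields that $\CF_{\text{app}}$ has the same number of zeros as $\CF_{\text{ref}}$, namely one, and \eqref{2.3.5} follows from $|\CF_{\text{app}}| \geq |\CF_{\text{ref}}| - |\CF_{\text{app}} - \CF_{\text{ref}}|$.

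The principal obstacle is producing a uniform Tietjens-type asymptotic for $\text{Ai}(1,z_0)/\text{Ai}(2,z_0)$ in the precise complex sector visited by $z_0(c)$ for $c \in D_0$, with quantitative sub-leading remainders expressed in terms of $K$ and $1-\cm^2$; this is exactly what pins down both the phase structure in the definition of $c_0$ and the numerical threshold $K_0$. Once that estimate is in hand, every remaining step reduces to size comparisons among explicit powers of $\vep$ and $K$.
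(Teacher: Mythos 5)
Your proposal is correct and follows essentially the same route as the paper: Rouch\'e's theorem applied on $\pa D_0$ against an affine reference function built from the boundary expansions of Lemma \ref{lm2.1}, the bound $\rho_{\text{app}}^s(0;c)=O(\vep^{1/8})$, and the large-$|z_0|$ asymptotics $\text{Ai}(1,z_0)/\text{Ai}(2,z_0)=-z_0^{1/2}+O(|z_0|^{-1})$, with the same error accounting ($O(K^{-2})$ from the Airy remainder and $O_K(\vep^{1/8}|\log\vep|)$ from the slow mode). The only cosmetic difference is that you freeze the Airy ratio at $c_0$ rather than replacing it by $-z_0^{1/2}$ as in \eqref{2.3.3-1}; this shifts the reference root off $c_0$ by only $O(K^{-3}\vep^{1/8})$ and does not affect the argument.
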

\begin{proof}
	The proof follows the approach used in  Proposition 3.2 in the authors' paper \cite{LYZ} with Liu on the  incompressible MHD system. For  completeness, we sketch the main steps as follows. Firstly,  we take $K_0$ sufficiently large
	so that \eqref{c} holds in the disk $D_0$. Then for $\a=K\vep^{\f18}$ and any $ c\in D_0,$ by  \eqref{2.1.12}, \eqref{2.1.13} and \eqref{c}, we have
	\begin{align}\label{2.3.6}
	\Phi_{\text{app}}^s(0;c)=-c+\f{\a}{(1-\cm^2)^{\f12}}+O(1)\vep^{\f14}|\log\vep|,~\pa_Y\Phi_{\text{app}}^s(0;c)=1+O(1)\vep^{\f18}|\log \vep|.
	\end{align}
	Thus from \eqref{2.1.3} the expression for $\rho_{\text{app}}^s$, one obtains
	\begin{align}\label{2.3.7}
	\rho_{\text{app}}^s(0;c)&=\cm^2A^{-1}(0)\left(c\pa_Y\Phi_{\text{app}}^s(0;c)+U_s'(0)   \Phi_{\text{app}}^s(0;c) \right)	=\frac{\cm^2}{1-\cm^2c^2}\left(\frac{\a}{(1-\cm^2)^{\f12}}  +O(1)\vep^{\f14}|\log\vep|\right )\nonumber\\
	&=O(1)\vep^{\f18}.
	\end{align}
Next, we consider the ratio $\frac{\text{Ai}(1,z_0)}{\text{Ai}(2,z_0)}$. Recall $z_0(c)=-\delta^{-1}c=-e^{\f16\pi i}K^{\f13}\vep^{-\f18}c$.  \eqref{c} implies that
\begin{align}
|z_0|=\frac{K^{\f43}}{(1-\cm^2)^{\f12}}(1+\tau_0K^{-2}),\text{ and }-\f{5}{6}\pi<\text{arg}z_0<-\f{5}{6}\pi+\tau_0K^{-2},~~ \forall c\in D_0.\label{2.3.9}
\end{align}	
Then using the asymptotic behavior of Airy profile (e.g. cf. \cite{GMM1}) and by  \eqref{2.3.9}, we  obtain 
\begin{align}\label{2.3.11}
\frac{\text{Ai}(1,z_0)}{\text{Ai}(2,z_0)}=-z_0^{\f12}+O(1)|z_0|^{-1}=-\frac{K^{\f23}}{(1-\cm^2)^{\f14}}e^{-\frac{5}{12}\pi i}+O(1)K^{-\f43},~~K\gg 1.
\end{align}

Now we set
\begin{align}
\mathcal{F}_{\text{ref}}(c)\eqdef 1+e^{-\f{1}{4}\pi i}K(1-\cm^2)^{-\f14}\vep^{-\f18}\left( -c+\frac{\a}{(1-\cm^2)^{\f{1}{2}}}  \right)\label{2.3.3-1}.
\end{align}
On one hand, there exists a unique zero point $c_0$  in \eqref{2.3.3} to the mapping $\mathcal{F}_{\text{ref}}(c)$. And on the boundary $\pa D_0$, it holds
\begin{align}\label{2.3.12}
|\mathcal{F}_{\text{ref}}(c)|=K^{-\theta}.
\end{align}
On the other hand, we can show that $\mathcal{F}_{\text{ref}}(c)$ is the leading order of $\mathcal{F}_{\text{app}}(c)$. In fact, by  \eqref{2.3.6}, \eqref{2.3.7} and \eqref{2.3.11}, 
we have the following estimate on the difference:
\begin{align}\nonumber
\left| \mathcal{F}_{\text{app}}(c)-\mathcal{F}_{\text{ref}}(c)\right|&\leq\left| 1+e^{\f{1}{6}\pi i}K^{\f13}\vep^{-\f18}\left(c-\f{\a}{(1-\cm^2)^{\f12}} \right)\f{\text{Ai}(1,z_0)}{\text{Ai}(2,z_0)}-\mathcal{F}_{\text{ref}}(c)\right|+C_{K,\cm}\vep^{\f18}|\log \vep|\\
&\leq \left| 1+e^{\f{1}{6}\pi i}K^{\f13}\vep^{-\f18}\left(c-\f{\a}{(1-\cm^2)^{\f12}} \right)z_0^{\f12}-\mathcal{F}_{\text{ref}}(c)\right|+C_{\cm}K^{-2}+C_{K,\cm}\vep^{\f18}|\log \vep|\nonumber\\
&\leq C_{\cm}K^{-2}+C_{K,\cm}\vep^{\f18}|\log\vep|,\nonumber
\end{align}
where the positive constants $C_\cm$ is independent of $K$ and $\vep$ and $C_{K,\cm}$  depends on $K$ and Mach number $\cm$, but not on $\vep$. Now we take $K_0$  larger if needed and then take $\vep_1\in (0,1)$ suitably small such that for $\vep\in (0,\vep_1)$ and $K\geq K_0$, it holds that
$$C_\cm K^{-2}+C_{K,\cm}\vep^{\f18}|\log\vep|<\f12K^{-\theta}.
$$
Consequently, one obtains
$$|\mathcal{F}_{\text{app}}(c)-\mathcal{F}_{\text{ref}}(c)|\leq \f12|\mathcal{F}_{\text{ref}}(c)|,~\forall c\in \pa D_0.
$$
Combining this with \eqref{2.3.12} yields \eqref{2.3.5}. Moreover,
since $\text{Ai}(1,z)$ and $\text{Ai}(2,z)$ are analytic functions and  $\text{Ai}(2,z_0)\neq0$ due to \eqref{2.3.11}, both $\mathcal{F}_{\text{app}}(c)$ and $\mathcal{F}_{\text{ref}}(c)$ are analytic in $D_0$.  Therefore, by Rouch\'e's Theorem, $\mathcal{F}_{\text{app}}(c)$ and $\mathcal{F}_{\text{ref}}(c)$ have the same number of zeros in $D_0$. The proof of Proposition \ref{prop2.1} is then completed.
\end{proof}	

We now conclude this subsection by summarizing the relations between the parameters $n$ (rescaled frequency), $\delta$ (scale of sublayer), $\a$ (wave number) and $\vep$ that will be used frequently later:
$$n=\frac{\a}{\sqrt{\vep}};
	\text{ and } \delta=e^{-\f16\pi i}n^{-\f13}.
$$
If in particular $\a=K\vep^{\f18}$ and $c\in D_0$, then 
\begin{align}
\a\approx |c|\approx \text{Im}c\approx |\delta|\approx n^{-\f13}\approx \vep^{\f18},\label{para}
\end{align}
where the relations may depend on $K$ but not on $\vep$.

\subsection{Estimates on error terms}

In this subsection, we will give the detailed estimate on  the error of the approximate solution  \eqref{2.3.1} by using a decomposition that takes the decay and regularity in $Y$  into consideration.

{\it \underline{``Regular+Smallness'' decomposition:}} Precisely, the approximate solution $\vec{\Xi}_{\text{app}}$ to  \eqref{1.4} has the following error representation:
\begin{align}\label{2.4.1}
\CL(\vec{\Xi}_{\text{app}})=(0,0,E_{v,\text{re}})+(0,E_{u,\text{sm}},E_{v,\text{sm}}).
\end{align}
Here the regular part
\begin{align}\label{2.4.2}
E_{v,\text{re}}=\text{Ray}_{\text{CNS}}(\Phi_{\text{app}}^s)
\end{align}
with  $\text{Ray}_{\text{CNS}}(\Phi_{\text{app}}^s)$  defined in \eqref{2.1.11}. Observe that $E_{v,\text{re}}$ has strong decay in $Y$ due to the background boundary layer profile. And
the smallness part reads
\begin{equation}
\begin{aligned}\label{2.4.3}
E_{u,\text{sm}}=&\sqrt{\vep}\Delta_{\a}u_{\text{app}}^s+\lambda i\a\sqrt{\vep}\dv(u_{\text{app}}^s,v_{\text{app}}^s)-\sqrt{\vep}\pa_Y^2U_s\rho_{\text{app}}^s+\eta\sqrt{\vep}\a^2u_{\text{app}}^f\\
&-i\a\eta(U_s(Y)-U_s'(0)Y){u}_{\text{app}}^f-\eta v_{\text{app}}^f(\pa_YU_s(Y)-\pa_YU_s(0)),\\
E_{v,\text{sm}}=&\sqrt{\vep}\Delta_{\a}v_{\text{app}}^s+\lambda\sqrt{\vep}\pa_Y\dv(u_{\text{app}}^s,v_{\text{app}}^s)+\eta\sqrt{\vep}\Delta_\a v_{\text{app}}^f-i\a\eta(U_s-c)v_{\text{app}}^f,
\end{aligned}
\end{equation}
where  $\eta\eqdef\delta^{-1}\Phi_{\text{app}}^s(0;c)$. In the following, 
we will show that  $E_{u,\text{sm}}$ and $E_{v,\text{sm}}$ are of higher order in $\vep$ than $E_{v,\text{re}}$.\\

The estimates on these error terms are summarized in the following proposition. Let us first define some weighted Sobolev spaces for later use. 
\begin{align}
L^2_w(\mathbb{R}_+)&\eqdef\left\{f\in L^2(\mathbb{R}_+)\bigg|~\|f\|_{L^2_w}\eqdef\||\pa_Y^2U_s|^{-\f12}f\|_{L^2}<\infty\right\},\nonumber\\
H^N_w(\mathbb{R}_+)&\eqdef\left\{f\in H^N(\mathbb{R}_+)\bigg|~\|f\|_{H^N_w}\eqdef\sum_{j=0}^N\|\pa_Y^jf\|_{L^2_w}<\infty\},~N\text{ is a positive integer}\right\}.\label{space}
\end{align}
Recall $K_0\geq 1$ and $\vep_1\in (0,1)$ are constants given in Proposition \ref{prop2.1}. For $K\geq K_0$ and $\vep\in (0,\vep_1),$ the disk $D_0$ is defined in \eqref{2.3.4}. The following proposition gives
the precise error bound estimates.

\begin{proposition}\label{prop2.2}
Let the Mach number $\cm\in (0,1)$. There exists $\vep_2\in (0,\vep_1)$, such that for $\vep\in (0,\vep_2)$, $\a=K\vep^{\f18}$ and $c\in D_0$, the error terms $E_{v,\text{re}}$, $E_{u,\text{sm}}$ and $E_{v,\text{sm}}$ satisfy the estimates
	\begin{align}\label{2.4.4}
\|E_{v,\text{re}}(\cdot~;c)\|_{H^1_w}&\lesssim_{K} \vep^{\f{3}{16}},\\
	\|E_{u,\text{sm}}(\cdot~;c)\|_{L^2}+\|E_{v,\text{sm}}(\cdot~;c)\|_{L^2}&\lesssim_K \vep^{\f{7}{16}},\label{2.4.5}
	\end{align}
where the constant is uniform in $\vep$.
\end{proposition}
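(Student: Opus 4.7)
The plan is to bound each of the three error components term-by-term using the explicit formulas for the slow mode $\vec{\Xi}_{\text{app}}^s$ and the fast mode $\vec{\Xi}_{\text{app}}^f$ from Sections 2.1 and 2.2, together with the size estimates \eqref{para} available on $D_0$.

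For $E_{v,\text{re}} = \text{Ray}_{\text{CNS}}(\Phi_{\text{app}}^s)$, I would insert the three-term decomposition \eqref{2.1.11}. Each term carries a factor of $\beta^2$ or $\alpha^2$, so the overall prefactor is $O(\alpha^2) = O(\vep^{1/4})$. The first two terms involve $\varphi_{1,\a}$, its integral representation \eqref{2.1.14}, and $e^{-\beta Y}\int_Y^\infty A^{-2}U_s'\varphi_+\,\dd X$; both are pointwise $O(|\pa_YU_s|)$ up to a harmless $U_s$--bounded factor, so the exponential decay of $\pa_YU_s$ together with assumption \eqref{A4} (which gives $|\pa_Y^2U_s|^{-1/2}|\pa_YU_s|\lesssim 1$) produces a bounded $L^2_w$ integral. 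The third term, involving $(U_s-c)\Phi_{\text{app}}^s$, is handled likewise. The only loss comes when computing $\pa_Y$: differentiating $\varphi_{-,\a}$ (hidden inside $\varphi_{1,\a}$) yields a factor of $(U_s-c)^{-1}$, and integrating this against $|\pa_Y^2U_s|^{-1/2}$ near the critical layer $U_s(Y_c)=\text{Re}\,c$ produces the sharp loss of $|\text{Im}\,c|^{-1/2}\lesssim \vep^{-1/16}$. Multiplying by the prefactor $\alpha^2\sim \vep^{1/4}$ gives exactly the claimed bound $\vep^{3/16}$.

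For the smallness terms, I would split each of $E_{u,\text{sm}}$, $E_{v,\text{sm}}$ in \eqref{2.4.3} into slow-mode and fast-mode parts. On the slow-mode part, Lemma \ref{lm2.1} and formula \eqref{2.1.18} give that $\pa_Y^j u_{\text{app}}^s$, $\pa_Y^j v_{\text{app}}^s$, and $\rho_{\text{app}}^s$ are bounded in $L^2(\mathbb{R}_+)$ uniformly in $\vep$ (with the decay provided by $e^{-\beta Y}$ and $\pa_YU_s$), so every such term carries an explicit $\sqrt{\vep}$ or $\a\sqrt{\vep}$ and is therefore $O(\vep^{1/2})$, which is stronger than the required $\vep^{7/16}$. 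The fast-mode contributions are more delicate: here I use the sublayer scaling $z=\delta^{-1}Y$ with $\delta\sim \vep^{1/8}$, together with $\eta=\delta^{-1}\Phi_{\text{app}}^s(0;c)=O(1)$. The change of variable $Y\mapsto z$ gives $\|z^k\text{Ai}(\ell,\,\cdot+z_0)\|_{L^2(\mathbb{R}_+)}=|\delta|^{1/2}\|z^k\text{Ai}(\ell,\,\cdot+z_0)\|_{L^2(e^{i\pi/6}\mathbb{R}_+)}$, and the Airy integrals are uniformly bounded for $z_0$ in the range \eqref{2.3.9}. Each fast-mode error term in \eqref{2.4.3} comes from a specific piece that the leading-order Airy system \eqref{2.2.2}--\eqref{2.2.4} did not absorb: namely the Taylor remainders $U_s(Y)-U_s'(0)Y=O(\delta^2 z^2)$ and $\pa_YU_s(Y)-\pa_YU_s(0)=O(\delta z)$, plus the dropped $\alpha^2$ from $\Delta_\a$ and the full factor $U_s-c$ in the $v^f$ equation. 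The dominant contribution is $i\a\eta(U_s-U_s'(0)Y)u^f$, whose $L^2$ norm is $\lesssim \a\cdot \delta^2\cdot \delta^{1/2}=\a|\delta|^{5/2}\sim \vep^{1/8+5/16}=\vep^{7/16}$; all other fast-mode terms are seen to be of the same or higher order by the same bookkeeping.

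The main obstacle I expect is the critical-layer singularity in $E_{v,\text{re}}$: the function $\varphi_-(Y)$ has a $\log(U_s-c)$ singularity inherited from the pole of $(U_s-c)^{-2}$ in its defining integral, so controlling $\pa_Y E_{v,\text{re}}$ in $L^2_w$ requires sharp bounds on $\int |U_s-c|^{-2}|\pa_Y^2U_s|^{-1}\,\dd Y$ localized near $Y_c$. This is where assumption \eqref{A4} is essential to convert the weight $|\pa_Y^2U_s|^{-1/2}$ into a bound by $|\pa_YU_s|^{-1}$ so that the standard critical-layer estimate yielding the $|\text{Im}\,c|^{-1/2}\sim \vep^{-1/16}$ loss applies and produces exactly the borderline power $\vep^{3/16}$. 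The bookkeeping for the fast-mode contributions, while technical, is routine once the Airy asymptotics and the change-of-variable gain $|\delta|^{1/2}$ are in place. Choosing $\vep_2\in(0,\vep_1)$ small enough so that all the $K$-dependent constants dominate the admissible error completes the proof.
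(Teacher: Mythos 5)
Your overall strategy is the same as the paper's (term-by-term estimation of \eqref{2.1.11} and \eqref{2.4.3} using the slow/fast mode bounds and the parameter relations \eqref{para}), and your fast-mode bookkeeping via the sublayer rescaling and the $|\delta|^{1/2}$ change-of-variable gain is exactly right. However, two of your intermediate claims are false, and you arrive at the correct exponents partly by accident because $\a\approx\text{Im}\,c\approx\vep^{1/8}$ makes your (wrong) accounting numerically coincide with the correct one.

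First, for $E_{v,\text{re}}$: there is no $(\text{Im}\,c)^{-1/2}$ critical-layer loss in the $H^1_w$ estimate. By Lemma \ref{lm2.3}, $\pa_Y\varphi_{1,\a}$ suffers only a logarithmic loss in $L^\infty_{\beta_1}$ (estimate \eqref{2.4.8-2}); the singular bounds $(\text{Im}\,c)^{-1}$ and $(\text{Im}\,c)^{-2}$ only appear at the level of $\pa_Y^2\varphi_{1,\a}$ and $\pa_Y^3\varphi_{1,\a}$, which do not enter $\pa_YE_{v,\text{re}}$. The actual source of the deficit is the opposite end of the estimate from where you put it: after applying \eqref{A4} to convert $|\pa_Y^2U_s|^{-1/2}\pa_YU_s$ into a merely \emph{bounded} (not decaying) function, one is left with $\a^2\|e^{-\beta_1Y}\|_{L^2}\sim\a^2\cdot\a^{-1/2}=\a^{3/2}=\vep^{3/16}$. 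In particular your claim that the undifferentiated part gives a ``bounded $L^2_w$ integral,'' hence $O(\a^2)=O(\vep^{1/4})$, is already too optimistic for the general profile class: it implicitly requires $|\pa_Y^2U_s|^{-1/2}\pa_YU_s$ to decay, which \eqref{A4} does not provide.

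Second, for $E_{u,\text{sm}}$: your assertion that the slow-mode part is $O(\vep^{1/2})$ because $\pa_Y^ju^s_{\text{app}}$ is ``bounded in $L^2$ uniformly in $\vep$'' is false. The term $\sqrt{\vep}\Delta_\a u^s_{\text{app}}$ requires $\|\pa_Y^2u^s_{\text{app}}\|_{L^2}$, which by Corollary \ref{cor2.3} (via $\|\pa_Y^3\Phi^s_{\text{app}}\|_{L^2}\lesssim(\text{Im}\,c)^{-1/2}$, i.e.\ $\beta\|\pa_Y^3\varphi_{1,\a}\|_{L^2}$) is of size $(\text{Im}\,c)^{-1/2}\sim\vep^{-1/16}$, so this contribution is exactly $\vep^{7/16}$ and is co-dominant with the fast-mode Taylor remainders — this is where the critical-layer-type loss actually lives, not in $E_{v,\text{re}}$. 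The final exponents \eqref{2.4.4}--\eqref{2.4.5} are nonetheless correct because the fast-mode terms you did compute also give $\vep^{7/16}$, but as written your proof would not survive a change of the exponent of $\a$ (say $\a=K\vep^{\beta}$ with $\beta\neq 1/8$), where the two accountings no longer agree.
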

The proof of Proposition \ref{prop2.2} follows from a series of estimations on the approximate solution. First of all, we show some properties of corrector $\varphi_{1,\a}$ and approximate solutions $\Phi_{\text{app}}^s$ to Rayleigh operator that are defined in \eqref{2.1.9} and \eqref{2.1.10} respectively. Fix $\cm\in (0,1)$ and  set ${\beta}_1\eqdef\frac{1}{2}(1-\cm^2)^{\f12}\a$. 

\begin{lemma}\label{lm2.3}
Let $\gamma_1$ be the constant given in Lemma \ref{lm2.1}. There exists $\gamma_2\in (0,\gamma_1)$, such that for $c\in \{c\in \mathbb{C} \mid \text{Im}c>0,~|c|< \gamma_2 \}$ and $\a\in (0,1)$, $\varphi_{1,\a}$ satisfies 
\begin{align}
&\|\varphi_{1,\a}\|_{L^\infty_{\beta_1}}+\a^{\f12}\|\varphi_{1,\a}\|_{L^2}\lesssim 1,\label{2.4.8}\\
&\|\pa_Y\varphi_{1,\a}\|_{L^\infty_{\beta_1}}\lesssim 1+|\log\text{Im}c|,~\|\pa_Y\varphi_{1,\a}\|_{L^2}\lesssim 1,\label{2.4.8-2}\\
&\|\pa_Y^2\varphi_{1,\a}\|_{L^\infty_{\beta_1}}+(\text{Im}c)^{-\f12}\|\pa_Y^2\varphi_{1,\a}\|_{L^2}\lesssim (\text{Im}c)^{-1},\label{2.4.8-3}\\
&\|\pa_Y^3\varphi_{1,\a}\|_{L^\infty_{\beta_1}}+(\text{Im}c)^{-\f12}\|\pa_Y^3\varphi_{1,\a}\|_{L^2}\lesssim (\text{Im}c)^{-2}.\label{2.4.8-4}
\end{align}
Moreover, if in addition $\a=K\vep^{\f18}$ and $c\in D_0$, we have 
\begin{align}\label{2.4.8-1}
\a^{\f12}\|\Phi_{\text{app}}^s\|_{L^2}+\|\pa_Y\Phi_{\text{app}}^s\|_{H^1}+\|\Phi_{\text{app}}^s\|_{W^{2,\infty}_{\beta_1}}+(\text{Im}c)^{\f12}\|\pa_Y^3\Phi^s_{\text{app}}\|_{L^2}+\text{Im}c\|\pa_Y^3\Phi_{\text{app}}^s\|_{L^\infty_{\beta_1}}\lesssim 1.
\end{align}
\end{lemma}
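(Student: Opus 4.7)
The plan is to work from the explicit representation (2.1.14),
$$\varphi_{1,\a}(Y)=-2e^{-\beta Y}\varphi_+(Y)\,I_1(Y)-2e^{-\beta Y}\varphi_-(Y)\,I_2(Y),$$
with $I_1(Y)=\int_0^Y \varphi_-(X)A^{-2}(X)\pa_YU_s(X)\,\dd X$ and $I_2(Y)=\int_Y^\infty \varphi_+(X)A^{-2}(X)\pa_YU_s(X)\,\dd X$, and reduce the lemma to a handful of pointwise estimates on $\varphi_\pm$, their derivatives, and elementary $\pa_YU_s$-weighted integrals. Since $\cm\in(0,1)$ and $|c|$ is small, both $A$ and $A^{-1}$ are $O(1)$ uniformly; moreover $\varphi_+=U_s-c=O(1)$ with $\pa_Y\varphi_+=\pa_YU_s$ decaying exponentially by (A2). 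The function $\varphi_-$ is $O(1)$ uniformly with $\varphi_-(0)=-1+O(|c\log\text{Im}c|)$ as recalled at the end of the proof of Lemma 2.1 (cf. Lemma 3.1 in \cite{LYZ}), while $\pa_Y\varphi_-$ carries a factor $(U_s-c)^{-1}$, which is $O((\text{Im}c)^{-1})$ near $Y=0$ but is tamed by $\pa_YU_s$ at infinity.

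With these ingredients, the $L^\infty_{\beta_1}$-bound on $\varphi_{1,\a}$ follows since (i) $I_1$ converges uniformly by the exponential decay of $\pa_YU_s$, so $e^{-\beta Y}\varphi_+ I_1=O(e^{-\beta Y})$; (ii) $e^{-\beta Y}\varphi_- I_2$ decays at rate $e^{-\beta Y}$, and $\beta_1\leq\beta/2$ when $|c|$ is small, so multiplication by $e^{\beta_1 Y}$ still gives a bounded quantity. The $L^2$-bound is then immediate from $\|e^{-\beta Y}\|_{L^2}\lesssim \a^{-1/2}$, which gives the factor $\a^{1/2}$ in \eqref{2.4.8}.

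For the derivative estimates I would differentiate the representation and track the singular pieces. Applying $\pa_Y$ introduces $\pa_Y\varphi_\pm$, a $-\beta$ prefactor, and boundary contributions from Leibniz; only $\pa_Y\varphi_-$ is singular, and its singular part $(U_s-c)^{-1}$ produces $|\log\text{Im}c|$ in $L^\infty_{\beta_1}$ (as $Y\to 0$), while in $L^2$ the singularity integrates to an $O(1)$ quantity since $(U_s-c)^{-1}$ is $L^2_{\text{loc}}$ in $Y$. Iterating, each additional derivative multiplies the singular kernel by $(U_s-c)^{-1}$, which near $Y=0$ contributes $(\text{Im}c)^{-1}$ in $L^\infty$; in $L^2$ the bound improves by a factor $(\text{Im}c)^{1/2}$ because the singular region has $Y$-measure $\approx \text{Im}c$. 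This yields \eqref{2.4.8-2}--\eqref{2.4.8-4}.

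Finally, for $\Phi_{\text{app}}^s=\varphi_{+,\a}+\beta\varphi_{1,\a}$, I would combine the $\varphi_{1,\a}$-bounds with $\beta\approx\a$ and with the parameter relations \eqref{para}, so that when $\a=K\vep^{1/8}$ and $c\in D_0$ the factors $\beta|\log\text{Im}c|$, $\beta(\text{Im}c)^{-1}$, and $\beta(\text{Im}c)^{-2}\cdot(\text{Im}c)^{1/2}$ all remain bounded, yielding \eqref{2.4.8-1}. The main obstacle is a bookkeeping one: precisely isolating the $(U_s-c)^{-k}$ singular pieces of $\pa_Y^k\varphi_-$, combining them correctly with the exponential decay from $\pa_YU_s$ so that the weight $e^{\beta_1 Y}$ is absorbed by $e^{-\beta Y}$, and verifying that the resulting powers of $\text{Im}c^{-1}$ are exactly those recorded in the statement.
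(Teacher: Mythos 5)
Your overall route—estimating $\varphi_{1,\a}$ directly from the explicit representation \eqref{2.1.14} and then assembling \eqref{2.4.8-1} from $\Phi^s_{\text{app}}=\varphi_{+,\a}+\beta\varphi_{1,\a}$ together with $\a/\text{Im}c\lesssim 1$—is exactly the paper's (which simply delegates \eqref{2.4.8}--\eqref{2.4.8-4} to Lemma 3.6 of \cite{LYZ}). However, your singularity accounting for $\pa_Y\varphi_-$ contains a genuine gap, and as stated it would not produce the powers of $\text{Im}c$ in \eqref{2.4.8-2}--\eqref{2.4.8-4}. You assert that the singular part of $\pa_Y\varphi_-$ is $(U_s-c)^{-1}$ and that this ``produces $|\log\text{Im}c|$ in $L^\infty$'' and ``integrates to an $O(1)$ quantity'' in $L^2$. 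Both inferences are false for a genuine $(U_s-c)^{-1}$ singularity: since $I_2(Y)=\int_Y^\infty\varphi_+A^{-2}\pa_YU_s\,\dd X$ does \emph{not} vanish at the critical layer, a term $(U_s-c)^{-1}I_2$ has sup-norm $\approx(\text{Im}c)^{-1}$ and $L^2$-norm $\approx(\text{Im}c)^{-1/2}$, one full power worse than \eqref{2.4.8-2}; iterating your rule then makes \eqref{2.4.8-3} and \eqref{2.4.8-4} fail by one power of $\text{Im}c$ as well. The missing idea is a cancellation: writing $\int_1^Y(U_s-c)^{-2}\dd X=-\big[\pa_YU_s\,(U_s-c)\big]^{-1}+C-\int_1^Y\f{\pa_X^2U_s}{(\pa_XU_s)^2(U_s-c)}\dd X$, the $(U_s-c)^{-1}$ contributions in $\pa_Y\varphi_-=\pa_YU_s\int_1^Y(U_s-c)^{-2}\dd X+(U_s-c)^{-1}+O(1)$ cancel exactly, leaving only a logarithmic singularity; it is $\pa_Y\varphi_-\sim\log(U_s-c)$, $\pa_Y^2\varphi_-\sim(U_s-c)^{-1}$, $\pa_Y^3\varphi_-\sim(U_s-c)^{-2}$ that match the stated bounds. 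Your proposal flags this as ``bookkeeping,'' but it is precisely the content of the lemma.

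A secondary inaccuracy: $\varphi_-$ is not $O(1)$ uniformly; it grows linearly as $Y\to\infty$ (indeed $\varphi_-\sim A_\infty(1-c)^{-1}Y$). The product $e^{-\beta Y}\varphi_-I_2$ is nevertheless $O(e^{-\beta_1Y})$, but only because $I_2$ decays like $e^{-s_0Y}$ with the \emph{fixed} rate $s_0$ from \eqref{A2}; the decay $e^{-(\beta-\beta_1)Y}$ alone would not absorb the linear growth uniformly in the small parameter $\a$. You have the right ingredient (exponential decay of $\pa_YU_s$) but deploy it for the wrong reason.
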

\begin{proof}
Recall that $\beta=\a\left[(1-\cm^2)^{\f12}+O(1)|c|\right]$ from \eqref{beta} and $A(Y)=1-\cm^2U_s^2+O(1)|c|$ from \eqref{2.1.4}. Taking $\gamma_2\in(0,\gamma_1)$ suitably small, we have
$\text{Re}\beta>\beta_1$ and $|A^{-1}|\leq \frac{1}{2(1-\cm^2)}\lesssim1$ for $|c|<\gamma_2$.
Then the proof of \eqref{2.4.8}-\eqref{2.4.8-4} follows from an argument
exactly as in Lemma 3.6 in \cite{LYZ} by using the explicit expression  \eqref{2.1.14}. Hence, we omit it for brevity.

For \eqref{2.4.8-1}, we recall \eqref{2.1.10} and observe that
\begin{align}\label{2.4.8-5}
\a^{\f12}\|\varphi_{+,\a}\|_{L^2}+\|\pa_Y\varphi_{+,\a}\|_{H^2}+\|\varphi_{+,\a}\|_{W^{3,\infty}_{\beta_1}}\lesssim 1.
\end{align} 
By \eqref{para}, we have $\a/\text{Im}c\lesssim1$. Thus putting \eqref{2.4.8}-\eqref{2.4.8-4} and \eqref{2.4.8-5} together yields the desired estimate \eqref{2.4.8-1}. The proof of the lemma is 
then completed.
\end{proof}

By  Lemma \ref{lm2.3}, we can immediately obtain the following estimates on the slow mode $\vec{\Xi}_{\text{app}}^s$  given in \eqref{2.1.18}.
\begin{corollary}\label{cor2.3}
If $\a=K\vep^{\f18}$ and $c\in D_0\cap\{c\in \mathbb{C}\mid|c|< \gamma_2\}$, the slow mode $\vec{\Xi}_{\text{app}}^s$ satisfies the following estimates:
\begin{align}\label{2.4.11}
\cm^{-2}\|\rho_{\text{app}}^s\|_{H^1}+\|u_{\text{app}}^s\|_{H^1}+\|v_{\text{app}}^s\|_{H^2}&\lesssim 1,\\
\cm^{-2}\|\pa_Y^2\rho_{\text{app}}^s\|_{L^2}+\|\pa_Y^2u_{\text{app}}^s\|_{L^2}&\lesssim (\text{Im}c)^{-\f12}.\label{2.4.12}
\end{align}
\end{corollary}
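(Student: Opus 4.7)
The approach is to unpack the explicit formulas in \eqref{2.1.18} for $(\rho_{\text{app}}^s,u_{\text{app}}^s,v_{\text{app}}^s)$ in terms of $\Phi_{\text{app}}^s$, and then read off all required norms from the master bound \eqref{2.4.8-1} in Lemma \ref{lm2.3}, together with the parameter relations \eqref{para} which give $\alpha\approx\mathrm{Im}\,c\approx\vep^{1/8}$. Throughout the argument I would treat the Mach number $\cm\in(0,1)$ as fixed and repeatedly use three uniform facts on $D_0\cap\{|c|<\gamma_2\}$: (i) $A(Y)=1-\cm^2(U_s-c)^2$ is bounded below in modulus, so $A^{-1}$ and its first two $Y$-derivatives lie in $L^\infty(\R_+)$, since $\pa_YA^{-1}$ and $\pa_Y^2A^{-1}$ are polynomial expressions in $A^{-1},U_s-c,\pa_YU_s,\pa_Y^2U_s$; (ii) $U_s-c$ is bounded in $L^\infty$; (iii) $\pa_YU_s\in L^2\cap L^\infty$ with exponential decay by \eqref{A2}, and $\pa_Y^2U_s$ has the same decay by \eqref{A4}.

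The estimate on $v_{\text{app}}^s=-i\alpha\Phi_{\text{app}}^s$ is immediate: $\|v_{\text{app}}^s\|_{H^2}\lesssim \alpha\|\Phi_{\text{app}}^s\|_{L^2}+\alpha\|\pa_Y\Phi_{\text{app}}^s\|_{H^1}$, and both pieces are $\lesssim 1$ since $\alpha^{1/2}\|\Phi_{\text{app}}^s\|_{L^2}\lesssim 1$ and $\alpha\lesssim 1$. For $\cm^{-2}\rho_{\text{app}}^s=-A^{-1}[(U_s-c)\pa_Y\Phi_{\text{app}}^s-\Phi_{\text{app}}^s\pa_YU_s]$, the $L^2$ bound comes from $\|(U_s-c)\pa_Y\Phi_{\text{app}}^s\|_{L^2}\lesssim\|\pa_Y\Phi_{\text{app}}^s\|_{L^2}\lesssim 1$ and $\|\Phi_{\text{app}}^s\pa_YU_s\|_{L^2}\lesssim\|\Phi_{\text{app}}^s\|_{L^\infty_{\beta_1}}\|\pa_YU_s\|_{L^2}\lesssim 1$; differentiating once produces terms containing at worst $\pa_Y^2\Phi_{\text{app}}^s$, $\pa_Y\Phi_{\text{app}}^s\cdot\pa_YU_s$, and $\Phi_{\text{app}}^s\cdot\pa_Y^2U_s$, all controlled by the same mechanism. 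The bound on $u_{\text{app}}^s=\pa_Y\Phi_{\text{app}}^s-(U_s-c)\rho_{\text{app}}^s$ in $H^1$ then follows by combining the $\pa_Y\Phi_{\text{app}}^s\in H^1$ bound with the $H^1$ control of $\rho_{\text{app}}^s$ just obtained and the $W^{1,\infty}$ bound on $U_s-c$.

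For the second-order bounds \eqref{2.4.12}, the point is that writing out $\pa_Y^2\rho_{\text{app}}^s$ and $\pa_Y^2u_{\text{app}}^s$ via the Leibniz rule produces at most one term involving $\pa_Y^3\Phi_{\text{app}}^s$, while every remaining contribution contains only derivatives of $\Phi_{\text{app}}^s$ of order $\leq 2$ (which are $L^2$-bounded uniformly) multiplied by bounded coefficients or by exponentially decaying factors $\pa_Y^kU_s$. The single bad term is then controlled by $\|\pa_Y^3\Phi_{\text{app}}^s\|_{L^2}\lesssim(\mathrm{Im}\,c)^{-1/2}$ from \eqref{2.4.8-1}, which dictates the $(\mathrm{Im}\,c)^{-1/2}$ factor on the right-hand side of \eqref{2.4.12}.

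The main (and really only) obstacle is bookkeeping: verifying that no hidden cancellation is needed and that each factor of $\cm^{-2}$ on the left-hand side is indeed produced by the explicit prefactor $\cm^2$ in the definition of $\rho_{\text{app}}^s$, so that the constants stay uniform as $\cm\to 0$. Since the construction of $\Phi_{\text{app}}^s$ in Section 2.1 is regular in $\cm$ and $A^{-1}$ stays bounded uniformly in $\cm\in(0,1-\eta)$ for any $\eta>0$ (and for $|c|$ small enough depending on $\cm$, which is built into the choice of $\gamma_2$), this is routine once the correct grouping of terms is fixed, and no further tool beyond Lemma \ref{lm2.3} and \eqref{para} is required.
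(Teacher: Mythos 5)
Your proposal is correct and follows essentially the same route as the paper: unpack the formulas \eqref{2.1.18}, invoke the bounds of Lemma \ref{lm2.3} (in particular \eqref{2.4.8-1}), and isolate the single $\pa_Y^3\Phi_{\text{app}}^s$ term as the source of the $(\text{Im}c)^{-1/2}$ loss. The only slip is attributing the decay of $\pa_Y^2U_s$ to \eqref{A4} (which is a lower bound); the needed upper bound $|\pa_Y^2U_s|\lesssim \pa_YU_s$ comes from \eqref{A3} together with \eqref{A2}.
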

\begin{proof}
The estimate on $v_{\text{app}}^s$ follows from \eqref{2.4.8-1} directly. For $\rho_{\text{app}}^s$, using \eqref{2.4.8-1} gives
$$\cm^{-2}\|\rho_{\text{app}}^s\|_{L^2}\lesssim \|\pa_Y\Phi_{\text{app}}^s\|_{L^2}+\|\pa_YU_s\|_{L^2}\|\Phi_{\text{app}}^s\|_{L^\infty}\lesssim1.
$$
Differentiating \eqref{2.1.3} with respect to $Y$ yields the formulas 
$$\cm^{-2}\pa_Y\rho_{\text{app}}^s=-\cm^{-2}\rho_{\text{app}}^sA^{-1}A'-A^{-1}(U_s-c)\pa_Y^2\Phi_{\text{app}}^s+\Phi_{\text{app}}^sA^{-1}\pa_Y^2U_s,
$$
and 
$$\cm^{-2}\pa_Y^2\rho_{\text{app}}^s=-2\cm^{-2}A^{-1} \pa_YA\pa_Y\rho_{\text{app}}^s-\cm^{-2}A^{-1}\pa_Y^2A\rho_{\text{app}}^s-A^{-1}\pa_Y\left[(U_s-c)\pa_Y^2\Phi_{\text{app}}^s-\Phi_{\text{app}}^s\pa_Y^2U_s\right].
$$
Taking $L^2$-norm and by  \eqref{2.4.8-1}, we obtain
$$\cm^{-2}\|\pa_Y\rho_{\text{app}}^s\|_{L^2}\lesssim \cm^{-2}\|\rho_{\text{app}}^s\|_{L^2}+\|\pa_Y^2\Phi_{\text{app}}^s\|_{L^2}+\|\Phi_{\text{app}}^s\|_{L^\infty}\|\pa_Y^2U_s\|_{L^2}\lesssim 1,
$$
and
$$\cm^{-2}\|\pa_Y^2\rho_{\text{app}}^s\|_{L^2}\lesssim \cm^{-2}\|(\rho_{\text{app}}^s,\pa_Y\rho_{\text{app}}^s)\|_{L^2}+\|\pa_Y\Phi_{\text{app}}^s\|_{H^2}+\|\pa_Y^3U_s\|_{L^2}\|\Phi_{\text{app}}^s\|_{L^\infty}\lesssim (\text{Im}c)^{-\f12}.
$$
The velocity field $u_{\text{app}}^s$ can be estimated in the same way so that  we omit the details. And
this completes the proof of the corollary.
\end{proof}
The following lemma gives some pointwise estimates on the  fast mode $(u_{\text{app}}^f,v_{\text{app}}^f)$ defined in \eqref{2.2.1}. The proof follows from Lemma 3.9 in \cite{LYZ} by using the pointwise estimate of Airy profiles. Thus,  we omit the details for brevity.
\begin{lemma}\label{lm2.4}
The fast mode $(u_{\text{app}}^s,v_{\text{app}}^s)$ has the following pointwise estimates:
\begin{align}
\left| \pa^k_Y u_{\text{app}}^f(Y;c)\right|&\lesssim n^{\f{k}{3}}e^{-\tau_1n^{\f13}Y},~k=0,1,2\label{2.4.9}\\
\left| \pa^k_Y v_{\text{app}}^f(Y;c)\right|&\lesssim n^{\f{k-2}{3}}e^{-\tau_1n^{\f13}Y},~k=0,1,2,\label{2.4.10}
\end{align} 
for some constant $\tau_1>0$ which does not depend on $n$.
\end{lemma}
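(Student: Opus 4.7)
The plan is to reduce the claim to classical pointwise asymptotics for the Airy function and its antiderivatives along the rotated ray $e^{i\pi/6}\mathbb{R}_+$. Setting $z=\delta^{-1}Y=e^{i\pi/6}n^{\f13}Y$ and $\zeta:=z+z_0=e^{i\pi/6}n^{\f13}(Y-c)$, the chain rule $\pa_Y=\delta^{-1}\pa_z$ combined with $\pa_z\text{Ai}(j,z)=\text{Ai}(j-1,z)$ (where $\text{Ai}(0,z)=\text{Ai}(z)$ and $\text{Ai}(-1,z)=\text{Ai}'(z)$) yields, from \eqref{2.2.1} and the definitions of $\mathcal{U}_0,\mathcal{V}_0$,
\begin{align*}
\pa_Y^k u_{\text{app}}^f(Y;c)=-\delta^{-k}\f{\text{Ai}(1-k,\zeta)}{\text{Ai}(2,z_0)},\qquad \pa_Y^k v_{\text{app}}^f(Y;c)=i\a\delta^{1-k}\f{\text{Ai}(2-k,\zeta)}{\text{Ai}(2,z_0)},
\end{align*}
for $k=0,1,2$. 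Since $|\delta|^{-1}=n^{\f13}$ and $\a\approx n^{-\f13}$ (from $\a=K\vep^{\f18}$ and $n=K\vep^{-\f38}$), the algebraic prefactors have moduli $n^{\f{k}{3}}$ and $\a n^{\f{k-1}{3}}\approx n^{\f{k-2}{3}}$ respectively, matching the orders claimed in \eqref{2.4.9}--\eqref{2.4.10}. It therefore remains to establish the uniform bound
\begin{align*}
\left|\f{\text{Ai}(j,\zeta)}{\text{Ai}(2,z_0)}\right|\lesssim e^{-\tau_1 n^{\f13}Y},\qquad j\in\{-1,0,1,2\},~Y\geq 0,~c\in D_0,
\end{align*}
for some $\tau_1>0$ independent of $\vep$ (but possibly depending on $K,\cm$).

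Both numerator and denominator are controlled by the classical asymptotic $\text{Ai}(\zeta)\sim\f{1}{2\sqrt{\pi}}\zeta^{-\f14}\exp\bigl(-\f23\zeta^{\f32}\bigr)$ valid in $|\text{arg}\,\zeta|<\pi-\eta$, together with analogous expansions for $\text{Ai}'$ and for the antiderivatives $\text{Ai}(1,\cdot),\text{Ai}(2,\cdot)$ derived by integration by parts against the super-exponential decay of $\text{Ai}$ along $e^{i\pi/6}\mathbb{R}_+$. Along the path $\zeta(Y)=z_0+e^{i\pi/6}n^{\f13}Y$, as $Y$ grows from $0$ the point $\zeta$ moves from $z_0$ (having $\text{arg}\,z_0\approx -\f{5\pi}{6}$ and $|z_0|\approx K^{\f43}(1-\cm^2)^{-\f12}$, cf. \eqref{2.3.9}) through the Airy critical layer $Y\approx\text{Re}\,c$ (where $|\zeta|=O(1)$) into the decay sector $\text{arg}\,\zeta\to\f{\pi}{6}$. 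A direct calculation in the complex plane gives
\begin{align*}
\text{Re}\bigl(\tfrac23\zeta(Y)^{\f32}\bigr)-\text{Re}\bigl(\tfrac23z_0^{\f32}\bigr)\gtrsim \tau_1 n^{\f13}Y,
\end{align*}
uniformly for $Y\geq0$, which exactly cancels the exponentially large factor in $|\text{Ai}(2,z_0)|$ in the denominator and produces the claimed decay in the ratio. The nonvanishing of $\text{Ai}(2,z_0)$ is already implicit in the ratio identity $\text{Ai}(1,z_0)/\text{Ai}(2,z_0)=-z_0^{\f12}+O(|z_0|^{-1})$ of \eqref{2.3.11}.

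The principal technical obstacle is bookkeeping the Airy asymptotic across three natural regimes of $Y$: the pre-critical region $0\leq Y\lesssim \text{Re}\,c$, where $\text{Re}\,z_0^{\f32}$ is negative and both numerator and denominator are exponentially large but their quotient has the correct sign in the exponent; the critical layer $Y\approx \text{Re}\,c$, where $|\zeta|=O(1)$ and the leading asymptotic breaks down, so one controls the numerator by analyticity on a compact set and absorbs the loss into the explicit lower bound on $|\text{Ai}(2,z_0)|$; and the decay region $Y\gg\text{Re}\,c$, where the super-exponential decay $\exp(-cn^{\f12}Y^{\f32})$ comfortably dominates the stated rate $\exp(-\tau_1 n^{\f13}Y)$. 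Extending the estimates from $\text{Ai}$ to $\text{Ai}'$, $\text{Ai}(1,\cdot)$ and $\text{Ai}(2,\cdot)$ contributes only polynomial factors in $|\zeta|$, which are absorbed into the implicit constant. Gluing the three regimes yields \eqref{2.4.9}--\eqref{2.4.10}.
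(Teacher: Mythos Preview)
Your approach is correct and is precisely the route the paper alludes to: the paper does not give a self-contained proof but simply states that the estimates follow from the pointwise Airy asymptotics established in \cite{LYZ} (their Lemma 3.9) and omits the details. Your outline---reducing to ratios $\text{Ai}(j,\zeta)/\text{Ai}(2,z_0)$ via the chain rule, matching the algebraic prefactors through $|\delta|^{-1}=n^{1/3}$ and $\a\approx n^{-1/3}$, and then extracting the exponential decay from the classical Airy asymptotic $\exp(-\tfrac23\zeta^{3/2})$ across the pre-critical, critical, and decay regimes---is exactly the content of that cited lemma.
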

With the above estimates, we are now ready to prove Proposition \ref{prop2.2} as follows. \\

{\bf Proof of Proposition \ref{prop2.2}:} We start with proving  \eqref{2.4.4} for  $E_{v,\text{re}}$. Recall the definition \eqref{2.4.2} and explicit formula \eqref{2.1.11}. By taking $\vep_2>0$ suitably small, such that $D_0\subset\{\text{Im}c>0, |c|\leq \gamma_2\}$, we have $\text{Re}\beta>\beta_1$ and $|A|\gtrsim1$. Then thanks to \eqref{A3}, \eqref{beta}, the bounds \eqref{2.4.8}-\eqref{2.4.8-1} and the fact that 
$$|\beta^2-\a^2 A|\lesssim \a^2|A(+\infty)-A(Y)|\lesssim \a^2|1-U_s(Y)|,
$$
we have
\begin{align}
|E_{v,\text{re}}(Y)|&\lesssim \a^2 \pa_YU_s(Y)e^{-\beta_1 Y}\|\varphi_{1,\a}\|_{L^\infty_{\beta_1}}+\a^2e^{-\beta_1 Y}(1-U_s(Y))+|\beta^2-\a^2A|e^{-\beta_1Y}\|\Phi_{\text{app}}^s\|_{L^\infty_{\beta_1}}\nonumber\\
&\lesssim \a^2\pa_YU_s(Y)e^{-\beta_{1}Y}\|\varphi_{1,\a}\|_{L^\infty_{\beta_1}}+\a^2e^{-\beta_1 Y}(1-U_s(Y))(1+\|\Phi_{\text{app}}^s\|_{L^\infty_{\beta_1}})\nonumber\\
&\lesssim \a^2e^{-\beta_1 Y}\pa_YU_s(Y).\nonumber
\end{align} 
This gives the following weighted estimate
\begin{align}\label{2.4.13}
\|E_{v,\text{re}}\|_{L^2_w}\lesssim \a^2\||\pa_Y^2U_s|^{-\f12}\pa_YU_s\|_{L^\infty}\|e^{-\beta_1Y}\|_{L^2}\lesssim \a^{\f32}\lesssim \vep^{\f3{16}},
\end{align}
by using the concavity \eqref{A4}. Moreover, differentiating \eqref{2.1.11} 
yields 
$$
|\pa_YE_{v,\text{re}}|\lesssim \a^2\left( |\pa_YU_s|^2+|\pa_Y^2U_s|   \right)|\varphi_{1,\a}|+\a^2|\pa_YU_s|\left( e^{-\beta_1 Y}+|\pa_Y\varphi_{1,\a}|+|\Phi_{\text{app}}^s|+|\pa_Y\Phi_{\text{app}}^s|    \right).
$$
With this, 
by the  bounds \eqref{2.4.8}-\eqref{2.4.8-1} and the concavity \eqref{A4}, we obtain
\begin{align}
\|\pa_YE_{v,\text{re}}\|_{L^2_w}&\lesssim \a^2\|e^{-\beta_1Y}\|_{L^2}\left( 1+\||\pa_Y^2U_s|^{-\f12}\pa_YU_s\|_{L^\infty}\right)\left(1+ \|\varphi_{1,\a}\|_{W^{1,\infty}_{\beta_1}}+\|\Phi_{\text{app}}^s\|_{W^{1,\infty}_{\beta_1}} \right)\nonumber\\
&\lesssim \a^{\f{3}{2}}\lesssim \vep^{\f{3}{16}}.\label{2.4.14}
\end{align}
Putting \eqref{2.4.13} and \eqref{2.4.14} together yields the  estimate \eqref{2.4.4} on part of the error
with decay.

Now we turn to estimate the part of error with smallness $(E_{u,\text{sm}},E_{v,\text{sm}})$ which is defined in \eqref{2.4.3}. Keeping in mind the bounds of parameters \eqref{para} and 
$$|\eta|=|\delta|^{-1}|\Phi_{\text{app}}^s(0;c)|\lesssim_K 1, \forall c\in D_0$$
because of  \eqref{2.1.12}. Note that also $|U_s(Y)-U_s'(0)Y|\lesssim Y^2, |\pa_YU_s(Y)-\pa_YU_s(0)|\lesssim Y$ and $$\|\dv(u_{\text{app}}^s,v_{\text{app}}^s)\|_{H^1}\leq C\|u_{\text{app}}^s\|_{H^1}+C\|v_{\text{app}}^s\|_{H^2}.$$ 
By the bounds on $(u_{\text{app}}^s,v_{\text{app}}^s)$ and $(u_{\text{app}}^f,v_{\text{app}}^f)$ given in  Corollary \ref{cor2.3} and Lemma \ref{lm2.4}, we have
\begin{align}
\|E_{u,\text{sm}}\|_{L^2}&\lesssim \sqrt{\vep}(\|u_{\text{app}}^s\|_{H^2}+\|v_{\text{app}}^s\|_{H^2}+\|\rho_{\text{app}}^s\|_{L^2})+\sqrt{\vep}\a^2\|u_{\text{app}}^f\|_{L^2}+\a\|Y^2u_{\text{app}}^f\|_{L^2}+\|Yv_{\text{app}}^f\|_{L^2}\nonumber\\
&\lesssim \sqrt{\vep}(1+|\text{Im}c|^{-\f12})+\sqrt{\vep}\a^2\|e^{-\tau_1n^{\f13} Y}\|_{L^2}+\a\|Y^2e^{-\tau_1n^{\f13}Y}\|_{L^2}+n^{-\f23}\|Ye^{-\tau_1n^{\f13}Y}\|_{L^2}\nonumber\\
&\lesssim \sqrt{\vep}(1+|\text{Im}c|^{-\f12})+\sqrt{\vep}\a^2n^{-\f16}+\a n^{-\f56}+n^{-\f76}\lesssim_K \vep^{\f7{16}},\label{2.4.15}
\end{align}
and
\begin{align}
\|E_{v,\text{sm}}\|_{L^2}&\lesssim \sqrt{\vep}(\|v_{\text{app}}^s\|_{H^2}+\|u_{\text{app}}^s\|_{H^1})+\sqrt{\vep}\|v_{\text{app}}^f\|_{H^2}+\a\left( |c|\|v_{\text{app}}^f\|_{L^2}+\|Yv_{\text{app}}^f\|_{L^2}\|Y^{-1}U_s\|_{L^\infty}  \right)\nonumber\\
&\lesssim \sqrt{\vep}+\sqrt{\vep}\|e^{-\tau_1n^{\f13}Y}\|_{L^2}+\a n^{-\f23}\left( |c|\|e^{-\tau_1n^{\f13}Y}\|_{L^2}+\|Ye^{-\tau_1n^{\f13}Y}\|_{L^2} \right)\nonumber\\
&\lesssim \sqrt{\vep}(1+n^{-\f16})+\a|c| n^{-\f56}+\a n^{-\f76}\lesssim_K \vep^{\f12}.\label{2.4.16}
\end{align}
Estimates \eqref{2.4.15} and \eqref{2.4.16} give the bound \eqref{2.4.5} for  $(E_{u,\text{sm}},E_{v,\text{sm}})$. Then the proof of the proposition is completed.  \qed

\section{Solvability of remainder system}
In this section, we will construct a solution to the resolvent problem 
\begin{equation}\label{3.0.1}
\left\{
\begin{aligned}
&i\a(U_s-c)\rho+\dv(u,v)=0,\\
&\sqrt{\vep}\Delta_\alpha u
+\lambda i\a\sqrt{\vep}\dv(u,v)-i\a(U_s-c)u-v\pa_YU_s- (\cm^{-2} i\a+\sqrt{\vep}\pa_Y^2U_s) \rho=f_u,\\
&\sqrt{\vep}\Delta_\a v+\lambda\sqrt{\vep} \pa_Y\dv(u,v)-i\a(U_s-c)v-\cm^{-2} \pa_Y\rho=f_v,\\
&v|_{Y=0}=0,
\end{aligned}
\right.
\end{equation}
where $(f_u,f_v)$ is a given inhomogeneous source term. If $(f_u,f_v)\in H^1(\mathbb{R}_+)^2$, we  define the operator
\begin{align}
\label{add3.1.5}
\Omega(f_u,f_v)\eqdef f_v-\frac{1}{i\a}\pa_Y(A^{-1}f_u).
\end{align}
Recall \eqref{space} the weighted function space $L^2_w(\mathbb{R}_+)$. 
The following is the main result in this section.

\begin{proposition}[Solvability of resolvent problem]\label{prop3.0}
	Let the Mach number $\cm\in (0,\f1{\sqrt{3}}).$ There exists $\vep_{3}\in (0,1)$, such that 
	$\forall \vep\in (0,\vep_3)$, $\a=K\vep^{\f18}$ and $c\in D_0$, then the following two statements hold.
	
	(1) If $(f_u,f_v)\in L^2(\mathbb{R}_+)^2$, then there exists a solution $\vec{\Xi}=(\rho,u,v)\in H^{1}(\mathbb{R}_+)\times H^2(\mathbb{R}_+)\times H^2_0(\mathbb{R}_+)$ to \eqref{3.0.1} which satisfies the following estimates:
	\begin{align}
	\|(\cm^{-1}\rho,u,v)\|_{L^2}&\lesssim \frac{1}{\a(\text{Im}c)^2}\|(f_u,f_v)\|_{L^2},	\label{3.0.2}\\
	\|\cm^{-2}\pa_Y\rho\|_{L^2}+\|\dv(u,v)\|_{H^1}&\lesssim \frac{1}{\a(\text{Im}c)^2}\|(f_u,f_v)\|_{L^2},\label{3.0.2-1}\\
	\|(\pa_Yu,\pa_Yv)\|_{L^2}&\lesssim \frac{n^{\f12}}{\a(\text{Im}c)^{\f12}}\|(f_u,f_v)\|_{L^2},\label{3.0.3}\\
	\|(\pa_Y^2u,\pa_Y^2v)\|_{L^2}&\lesssim \frac{n}{\a\text{Im}c}\|(f_u,f_v)\|_{L^2}.\label{3.0.4}
	\end{align}
	
	(2) If in addition we have $(f_u,f_v)\in H^1(\mathbb{R}_+)^2$ with $\|\Omega(f_u,f_v)\|_{L^2_w}<\infty,$ then there exists a solution $\vec{\Xi}=(\rho,u,v) \in H^{1}(\mathbb{R}_+)\times H^2(\mathbb{R}_+)\times H^2_0(\mathbb{R}_+)$ to \eqref{3.0.1} which satisfies the following improved estimates
	\begin{align}
	\|(\cm^{-1}\rho, u,v)\|_{H^1}&\lesssim \frac{1}{\text{Im}c}\|\Omega(f_u,f_v)\|_{L^2_w}+\f1{\a}\|f_u\|+\|f_v\|_{L^2}+\|\dv(f_u,f_v)\|_{L^2},\label{3.0.5}\\
	\|(\pa_Y^2u,\pa_Y^2v)\|_{L^2}
	&\lesssim\f{n^{\f12}}{(\text{Im}c)^{\f12}} \|\Omega(f_u,f_v)\|_{L^2_w}+\frac{1}{\text{Im}c}\left(\f1{\a}\|f_u\|+\|f_v\|_{L^2}+\|\dv(f_u,f_v)\|_{L^2} \right).\label{3.0.6}
	\end{align} 
	
	Moreover, if $(f_u,f_v)(\cdot~;c)$ is analytic in $c$ with values in $L^2$, then the solution $\vec{\Xi}(\cdot~;c)$ is analytic with values in $H^1(\mathbb{R}_+)\times H^2(\mathbb{R}_+)\times H^2_0(\mathbb{R}_+)$.
	
\end{proposition}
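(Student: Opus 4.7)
The plan is to implement the three-tier decomposition $\CL = L_Q + E_Q = L_S + E_S$ outlined in the strategy section. Given a source $(0, f_u, f_v)$, the first step is to invert the quasi-compressible operator $L_Q$ by introducing the stream function $\Psi$ associated to the modified velocity $(\cu + (U_s - c)\varrho, \cv)$; elimination of $\varrho$ via the continuity equation reduces the system to the compressible Orr--Sommerfeld equation \eqref{t3} subject to the artificial boundary conditions $\Psi|_{Y=0} = \Lambda(\Psi)|_{Y=0} = 0$. The plan is to solve this by a weighted resolvent estimate, testing against a multiplier built from $\Lambda(\Psi)$, and to recover $\Psi$ (and hence $(\varrho, \cu, \cv)$) using invertibility of $\Lambda$ deferred to the Appendix. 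A structural payoff of this step is the regularizing effect on the density: applying $\dv$ to the modified momentum equation annihilates the divergence-free diffusion and yields $\Delta_\a \varrho \in L^2$, which is essential for later passes through $L_Q$.

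The second step upgrades the regularity of the error $E_Q(\varrho, \cu, \cv)$, which by \eqref{t4} carries a prefactor $\sqrt{\vep}$ but only lives in $L^2$. Solving the Stokes problem $L_S(\xi, \phi, \psi) = -E_Q(\varrho, \cu, \cv)$ with $\pa_Y \phi|_{Y=0} = \psi|_{Y=0} = 0$ by the standard compressible energy method (in the spirit of Matsumura--Nishida and Kawashima) produces $(\xi, \phi, \psi) \in H^1 \times H^2 \times H^2$; because $L_S$ drops only the stretching term, the residual $E_S(\xi, \phi, \psi) = (0, \psi \pa_Y U_s, 0)$ inherits the exponential decay of $\pa_Y U_s$ and lies in $H^1_w$, making it admissible as a new source for $L_Q$. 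The iteration is then: if the $N$-th residual $\vec{\CE}_N \in H^1_w$ has zero first component, set $\vec{\Xi}_{N+1} = -L_Q^{-1}(\vec{\CE}_N) + L_S^{-1} \circ E_Q \circ L_Q^{-1}(\vec{\CE}_N)$, so the $(N+1)$-th residual equals $E_S \circ L_S^{-1} \circ E_Q \circ L_Q^{-1}(\vec{\CE}_N)$. Combining the $\sqrt{\vep}$-smallness of $E_Q$, the $H^2$-regularization from $L_S^{-1}$, and the decay extracted by $E_S$, the composite contracts in $H^1_w$ for $\vep$ small, and the series converges in $H^1 \times H^2 \times H^2_0$; analyticity in $c$ is inherited because each building block depends analytically on $c$. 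Part (1), where only $(f_u, f_v) \in L^2$ is available and $\Omega(f_u, f_v)$ need not be defined, is handled by a direct $L^2$-resolvent estimate on $\CL$ that trades regularity for negative powers of $\text{Im}\,c$ consistent with \eqref{para}, giving \eqref{3.0.2}--\eqref{3.0.4}; part (2) uses the iteration above, whose natural energy is controlled by $\|\Omega(f_u, f_v)\|_{L^2_w}$ and yields the sharper \eqref{3.0.5}--\eqref{3.0.6}.

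The main obstacle is the weighted energy estimate for $\text{OS}_{\text{CNS}}$. The multiplier $\cw(Y) := -\pa_Y(A^{-1} \pa_Y U_s)$ appearing in \eqref{t3} is complex-valued, so after expanding $\cw = \cw_0 + c\,\cw_1 + O(|c|^2)$ both the leading and subleading coefficients contribute to the coercivity. The decisive algebraic fact---positivity of the real quantity $\cw_0 - U_s \cw_1$, cf.\ \eqref{3.1.15-2} in the proof of Lemma \ref{prop3.1}---must exploit the concavity consequence \eqref{A4} together with the structural assumption \eqref{A1}, and this is precisely where the Mach number restriction $\cm \in (0, 1/\sqrt{3})$ enters, through the sign of a factor involving $1 - 3\cm^2$. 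Once this weighted coercivity is secured, the remaining commutator, boundary, and lower-order contributions are controlled by careful bookkeeping with the scaling relations \eqref{para}, and the contraction loop closes to yield \eqref{3.0.2}--\eqref{3.0.6}.
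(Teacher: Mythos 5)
Your overall architecture --- the $\text{OS}_{\text{CNS}}$ reduction with the complex multiplier $\cw$, the positivity of $\cw_0-U_s\cw_1$ under $\cm\in(0,\tfrac1{\sqrt3})$, the $L_S$-regularization of $E_Q$, and the contraction of $E_S\circ L_S^{-1}\circ E_Q\circ L_Q^{-1}$ --- is exactly the paper's. But there is a genuine gap in your treatment of Part (1). You claim it is ``handled by a direct $L^2$-resolvent estimate on $\CL$ that trades regularity for negative powers of $\text{Im}\,c$.'' No such estimate is available, and its absence is precisely the reason the whole iteration exists: testing \eqref{3.0.1} with $(-\bar u,-\bar v)$ produces the dissipation $\sqrt{\vep}\,\|\nabla_\a(u,v)\|_{L^2}^2$ and the damping $\a\,\text{Im}c\,\|(u,v,\cm^{-1}\rho)\|_{L^2}^2$, but the stretching term contributes $\int v\,\pa_YU_s\,\bar u\,\dd Y=O(1)\|u\|_{L^2}\|v\|_{L^2}$ with an $O(1)$ constant. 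Since $\a\,\text{Im}c\approx\vep^{\f14}$ and the viscosity coefficient is only $\sqrt{\vep}$ (even Hardy's inequality, $\|Y^{-1}v\|_{L^2}\lesssim\|\pa_Yv\|_{L^2}$, costs a factor $\vep^{-\f12}$ after Young), this term cannot be absorbed, and the direct estimate does not close in the regime \eqref{para}.

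The correct handling of Part (1) --- and the one the paper uses --- is to \emph{initialize the iteration with the Stokes solve}: set $\vec{\Xi}_0=L_S^{-1}(0,f_u,f_v)$, which is well defined for mere $L^2$ sources (Proposition \ref{prop3.3} requires no $\Omega$-bound), and observe that the resulting residual $\vec{\CE}_0=E_S(\vec{\Xi}_0)=(0,-\psi_0\pa_YU_s,0)$ automatically acquires, from the $H^2$-regularity of $\psi_0$ and the exponential decay of $\pa_YU_s$, exactly the weighted regularity $\|\Omega(\vec{\CE}_0)\|_{L^2_w}<\infty$ needed to feed it into $L_Q$ at the next step (this is where the singular factor $\a^{-1}$ in $\Omega$ is tamed via $\pa_Y\psi_0=\dv(\phi_0,\psi_0)-i\a\phi_0$ and the improved divergence estimate \eqref{3.2.4}). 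Part (2) is then not the ``iteration with $L_Q$ applied to the source at every step'' you describe, but a single preliminary solve $L_Q(\varrho_0,\cu_0,\cv_0)=(0,f_u,f_v)$ followed by an application of Part (1) to the $L^2$ error $\vec{\CE}_{-1}=E_Q(\varrho_0,\cu_0,\cv_0)$. With these two corrections --- Stokes-first initialization for (1), one $L_Q$-solve plus reduction to (1) for (2) --- your argument matches the paper's proof.
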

\begin{remark}\label{rmk3.1}\quad
	
\begin{itemize}
	\item[(a)] 	By Sobolev embedding $H^1(\mathbb{R}_+)\hookrightarrow L^\infty(\mathbb{R}_+)$, the mapping $u(0;c): D_0\mapsto \mathbb{C}$ is analytic.
	\item[(b)]	The solutions to \eqref{3.0.1} are in general not unique because we do not prescribe the boundary data at $Y=0$ for $u$.
\item[(c)] The constants in estimates \eqref{3.0.2}-\eqref{3.0.6} are uniform for $\cm\in (0,\cm_0]$ with any $\cm_0\in (0,\f1{\sqrt{3}})$.
\item[(d)] As one can see from the proof, the argument also works for a wider regime of parameters: 
\begin{align}\label{re}
|\a|\lesssim 1, ~ |c|\ll 1,~\frac{|c|^2}{\text{Im}c}\ll 1,~ \frac{1}{n(\text{Im}c)^2}\ll 1.
\end{align}
In fact, the boundedness of wave number $\a$ is essentially used in the proof. In addition,
we require $c$ to satisfy \eqref{bc} so that $c\in \Sigma_{Q}\cap\Sigma_{S}$ where $\Sigma_{Q}$ and $\Sigma_{S}$ are resolvent sets of $L_Q$ and $L_S$ respectively. Moreover, in view of \eqref{3.3N.10-1}, we require smallness of $\f{1}{n(\text{Imc})^2}$ in order to establish the convergence of iteration. These requirements can be fulfilled by
the smallness in \eqref{re}.
\end{itemize}
\end{remark}

As mentioned in the Introduction, the proof of Proposition \ref{prop3.0} is based on the following two  newly introduced decompositions, i.e., quasi-compressibile approximation and the Stokes approximation.

\subsection{Quasi-compressible approximation} 
Following the strategy described in the Introduction, we first consider the following approximate problem:
\begin{equation}\label{3.1.1}
\left\{
\begin{aligned}
&i\a(U_s-c)\varrho+\dv(\cu,\cv)=0,\\
&\sqrt{\vep}\Delta_\a\left(\cu+(U_s-c)\varrho\right)-i\a (U_s-c)\cu-\cv\pa_YU_s-i\a\cm^{-2}\varrho=s_1,\\
&\sqrt{\vep}\Delta_\a \cv-i\a(U_s-c)\cv-\cm^{-2}\pa_Y\varrho=s_2,\\
&\cv|_{Y=0}=0,	
\end{aligned}
\right.
\end{equation}
with a given inhomogeneous source term $(s_1,s_2)$. 

By the continuity equation $\eqref{3.1.1}_1$, we can define a stream function $\Psi$ such that
\begin{align}\label{3.1.2}
\pa_Y\Psi=\cu+(U_s-c)\varrho,~-i\a\Psi=\cv,~\Psi|_{Y=0}=0.
\end{align}
Then by  $\eqref{3.1.1}_2$ we can express the density $\rho$ in terms of $\Psi$ as
\begin{align}\label{3.1.3}
\cm^{-2}\varrho(Y)=-A^{-1}(Y)\left[\f{i}{n}\Delta_\a\pa_Y\Psi+(U_s-c)\pa_Y\Psi-\Psi\pa_YU_s+(i\a)^{-1}s_1\right].
\end{align}
Substituting \eqref{3.1.3} into  $\eqref{3.1.1}_3$, we derive the following  equation for $\Psi$
which can be viewed as the Orr-Sommerfeld equation in the compressible setting:
\begin{align}
\label{3.1.4}
\text{OS}_{\text{CNS}}(\Psi)\eqdef\f{i}{n}\Lambda(\Delta_\a \Psi)+(U_s-c)\Lambda(\Psi)-\pa_Y(A^{-1}\pa_YU_s)\Psi=\Omega(s_1,s_2),~Y>0,
\end{align}
where the definition of operator $\Lambda$  is given by
\begin{equation}
\begin{aligned}\label{3.1.5}
&\Lambda: H^2(\mathbb{R}_+)\cap H^1_0(\mathbb{R}_+)\rightarrow L^2(\mathbb{R}_+),\\
&\Lambda(\Psi)\eqdef \pa_Y(A^{-1}\pa_Y\Psi)-\a^2\Psi,
\end{aligned}
\end{equation}
and $\Omega$ is given in \eqref{add3.1.5}.  

 In order to solve \eqref{3.1.4}, we consider the following boundary condition
\begin{align}\label{3.1.6}
\Psi|_{Y=0}=\Lambda (\Psi)|_{Y=0}=0.
\end{align} 
If $\Psi$ solves problem \eqref{3.1.4}  with boundary conditions \eqref{3.1.6}, it is straightforward to check that $(\rho,u,v)$ defined by \eqref{3.1.2} and \eqref{3.1.3} is a solution to  \eqref{3.1.1}. 

Thus, in the following, we consider
 the boundary value problem \begin{align}\label{3.1.4-1}
\text{OS}_{\text{CNS}}(\Psi)=h,~Y>0,~~\Psi|_{Y=0}=\Lambda(\Psi)|_{Y=0}=0,
\end{align}
with a given inhomogeneous source $h\in L^2_w(\mathbb{R}_+)$.
Let us first introduce the multiplier \begin{align}\label{w}
\mathcal{w}(Y)\eqdef -\left(\pa_Y\left( A^{-1}\pa_YU_s\right)\right)^{-1}.
\end{align}
A straightforward computation yields the  properties of $w$ stated in the following lemma.
\begin{lemma}\label{lm3.1}
	Let $\cm\in (0,1)$ and $U_s$ satisfy \eqref{A0}-\eqref{A3}. There exists $\gamma_3>0$, such that if $|c|< \gamma_3$,  $\cw(Y)$ has the expansion
	\begin{align}\label{ew}
	\cw=\cw_0+c\cw_1+O(1)|c|^2|\pa_Y^2U_s|^{-1}.
	\end{align} 
	Here $\cw_0$ and $\cw_1$ are given by
	\begin{align}\label{3.1.7}
	\cw_0=\f{(1-m^2U_s^2)^2}{H\left(|\pa_YU_s|^2+|\pa_Y^2U_s|\right)},~\cw_1=\f{4\cm^2U_s(1-m^2U_s^2)}{H\left(|\pa_YU_s|^2+|\pa_Y^2U_s|\right)}
	-\f{2\cm^2(1-\cm^2U_s^2)^2\left(|\pa_YU_s|^2-U_s\pa_Y^2U_s \right)}{H^2\left(|\pa_YU_s|^2+|\pa_Y^2U_s|\right)^2},
	\end{align}
	where the function $H(Y)$ is defined in \eqref{A1}.
	Moreover, it holds that
	\begin{align}\label{3.1.8}
	\cw_0(Y)\approx |\cw(Y)|\approx |\pa_Y^2U_s(Y)|^{-1}.
	\end{align}
\end{lemma}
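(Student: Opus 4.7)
The plan is to compute $\pa_Y(A^{-1}\pa_YU_s)$ explicitly, Taylor expand in $c$ around $c=0$, and then invert the expansion to obtain the claimed formulas for $\cw_0$ and $\cw_1$.

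First I would apply the product rule and the identity $\pa_YA=-2\cm^2(U_s-c)\pa_YU_s$ to write
\begin{equation*}
g(c,Y)\eqdef \pa_Y\bigl(A^{-1}\pa_YU_s\bigr)=A^{-1}\pa_Y^2U_s+2\cm^2A^{-2}(U_s-c)|\pa_YU_s|^2.
\end{equation*}
Evaluating at $c=0$ gives $A|_{c=0}=1-\cm^2U_s^2$, and after putting the two terms over the common denominator $(1-\cm^2U_s^2)^2$ the numerator is exactly $(1-\cm^2U_s^2)\pa_Y^2U_s+2\cm^2U_s|\pa_YU_s|^2=-H\bigl(|\pa_Y^2U_s|+|\pa_YU_s|^2\bigr)$, by the very definition of $H$ in \eqref{A1}. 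Thus $g(0,Y)=-(1-\cm^2U_s^2)^{-2}H(|\pa_Y^2U_s|+|\pa_YU_s|^2)$, and the formula for $\cw_0=-g(0,Y)^{-1}$ in \eqref{3.1.7} follows immediately.

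Next, since $A$ is a polynomial in $c$ and, by \eqref{A1} together with $\cm<1$, $|A(Y)|$ is bounded below by a positive constant uniformly in $Y$ for $|c|$ sufficiently small (say $|c|<\gamma_3$), the map $c\mapsto g(c,Y)$ is analytic in a disk around the origin with radius bounded below uniformly in $Y$, and $g(0,Y)\ne 0$. Hence $\cw=-g^{-1}$ admits a convergent Taylor expansion $\cw=\cw_0+c\cw_1+O(|c|^2)|\pa_Y^2U_s|^{-1}$, where $\cw_1=\cw_0^2\,\pa_cg(0,Y)$. I would compute $\pa_cg(0,Y)$ by using $\pa_cA|_{c=0}=2\cm^2U_s$ and then differentiating the two summands in $g$; collecting the resulting contributions over the common denominator and using once more the identity that relates $(1-\cm^2U_s^2)\pa_Y^2U_s+2\cm^2U_s|\pa_YU_s|^2$ to $-H(|\pa_Y^2U_s|+|\pa_YU_s|^2)$ produces the two-term formula for $\cw_1$ stated in \eqref{3.1.7}. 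The remainder bound $O(|c|^2)|\pa_Y^2U_s|^{-1}$ then follows from the uniform (in $Y$) bound on $\pa_c^2 g$ for $|c|\le\gamma_3/2$.

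For the equivalence \eqref{3.1.8}, I would use the structural assumptions to compare sizes. The concavity estimate \eqref{A4} gives $|\pa_YU_s|^2\lesssim|\pa_Y^2U_s|$, and conversely \eqref{A3} yields $|\pa_Y^2U_s|\lesssim \pa_YU_s\lesssim 1$, so
\begin{equation*}
|\pa_Y^2U_s|+|\pa_YU_s|^2\approx|\pa_Y^2U_s|.
\end{equation*}
The factor $(1-\cm^2U_s^2)^2$ is bounded above by $1$ and below by $(1-\cm^2)^2>0$, while $H\geq\sigma_1$ from \eqref{A1} and $H\lesssim 1$ directly from its definition. Combining these yields $\cw_0\approx|\pa_Y^2U_s|^{-1}$. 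The same bookkeeping shows $|\cw_1|\lesssim|\pa_Y^2U_s|^{-1}$, so that the Taylor expansion gives $|\cw|=|\cw_0|(1+O(|c|))\approx|\pa_Y^2U_s|^{-1}$ after shrinking $\gamma_3$ if necessary. The only slightly delicate point is the uniformity in $Y$ of the analyticity radius in the Taylor expansion, which is why we need the uniform subsonic assumption $\cm<1$ together with the lower bound on $\text{Re}\,A$ for small $|c|$; this is the one place where some care is required, but no deep obstacle appears.
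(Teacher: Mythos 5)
Your plan is correct and is exactly the ``straightforward computation'' the paper invokes without writing out: expand $g(c,Y)=\pa_Y(A^{-1}\pa_YU_s)$ to second order in $c$, identify $g(0,Y)=-(1-\cm^2U_s^2)^{-2}H(|\pa_Y^2U_s|+|\pa_YU_s|^2)$ via the definition of $H$, invert, and use \eqref{A1} and \eqref{A4} for the size comparison \eqref{3.1.8}; I checked that $\cw_1=\cw_0^2\,\pa_cg(0,Y)$ does reproduce the two-term formula in \eqref{3.1.7}. The one point worth making explicit when you write it up is that the uniform-in-$Y$ remainder bound needs the weighted lower bound $|g(c,Y)|\gtrsim|\pa_Y^2U_s(Y)|$ (from $H\geq\sigma_1$ together with $|\pa_cg|\lesssim|\pa_Y^2U_s|$, the latter via \eqref{A4}), not merely $g(0,Y)\neq0$, since $g(0,Y)\to0$ as $Y\to\infty$.
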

Set the function space
\begin{align}
\mathbb{X}\eqdef \left\{\Psi\in H^3(\mathbb{R}_+)\cap H^1_0(\mathbb{R}_+),~\Lambda(\Psi)|_{Y=0}=0~\bigg|~\|\pa_Y\Psi,\a\Psi\|_{L^2}+\|\Lambda(\Psi)\|_{L^2_w}+\|\pa_Y\Lambda(\Psi)\|_{L^2_w}<\infty\right\}.\nonumber
\end{align}

For the problem \eqref{3.1.4-1}, we have the following lemma.

\begin{lemma}[A priori estimates] \label{prop3.1}
	Let $\cm\in (0,\f{1}{\sqrt{3}})$ and $\Psi\in \mathbb{X}$ be a solution to \eqref{3.1.4-1}. There exists $\gamma_4\in (0,\gamma_3)$, such that for $\a\in (0,1)$ and $c$ lies in \begin{align}\label{Q}
	 \Sigma_{Q}\eqdef\{c\in \mathbb{C}\mid Imc>\min\{\gamma^{-1}_4|c|^2,~\gamma_4^{-1}n^{-1}\},~|c|<\gamma_4\},
	\end{align}
	$\Psi$ has the following estimates
	\begin{align}\label{3.1.9}
	\|(\pa_Y\Psi,\a\Psi)\|_{L^2}+\|\Lambda(\Psi)\|_{L^2_w}&\leq\f{C}{\text{Im} c}\|h\|_{L^2_w},\\
	\|\left(\pa_Y\Lambda(\Psi),\a\Lambda(\Psi)\right)\|_{L^2_w}&\leq\f{Cn^{\f12}}{(\text{Im} c)^{\f12}}\|h\|_{L^2_w}.\label{3.1.10}
	\end{align}
\end{lemma}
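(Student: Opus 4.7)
The plan is to prove both estimates simultaneously through a single weighted energy identity, obtained by multiplying \eqref{3.1.4-1} by $\cw\,\overline{\Lambda(\Psi)}$ and integrating over $\mathbb{R}_+$. The whole point of the multiplier $\cw$ defined in \eqref{w} is that the Rayleigh-type zeroth order piece $-\pa_Y(A^{-1}\pa_Y U_s)\Psi=\cw^{-1}\Psi$ becomes $\Psi\,\overline{\Lambda(\Psi)}$, which after one integration by parts using $\Psi|_{Y=0}=0$ equals $-\int_0^\infty(A^{-1}|\pa_Y\Psi|^2+\a^2|\Psi|^2)\dd Y$. The two boundary conditions $\Psi|_{Y=0}=\Lambda(\Psi)|_{Y=0}=0$ are chosen precisely so that all the integrations by parts I will need, including those in the diffusion term, produce vanishing or harmless boundary residues.

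Taking the imaginary part of the identity is the core step. By \eqref{ew} and the reality of $U_s$, the advection term $(U_s-c)\cw|\Lambda(\Psi)|^2$ contributes
\begin{align*}
-\text{Im}\,c\int_0^\infty(\cw_0-U_s\cw_1)|\Lambda(\Psi)|^2\dd Y + O(|c|^2)\int_0^\infty\cw_0|\Lambda(\Psi)|^2\dd Y,
\end{align*}
while the diffusion piece $\frac{i}{n}\Lambda(\Delta_\a\Psi)\cdot\cw\,\overline{\Lambda(\Psi)}$, after two integrations by parts using $\Lambda(\Psi)|_{Y=0}=0$ and the algebraic identity $\Lambda(\Psi)=A^{-1}\Delta_\a\Psi+\text{l.o.t.}$, produces the non-positive principal contribution $-n^{-1}\int_0^\infty\cw\bigl(A^{-1}|\pa_Y\Lambda(\Psi)|^2+\a^2|\Lambda(\Psi)|^2\bigr)\dd Y$ up to commutators that can be absorbed into $\|\Lambda(\Psi)\|_{L^2_w}$. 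Granting the pointwise positivity $\cw_0-U_s\cw_1\gtrsim\cw_0$ (the main obstacle, discussed below), absorbing the $O(|c|^2)$ remainder via $|c|^2/\text{Im}\,c\ll 1$ (which holds on $\Sigma_{Q}$ by \eqref{Q}), and applying Young's inequality to $\int h\,\cw\,\overline{\Lambda(\Psi)}\dd Y$ yields
\begin{align*}
\text{Im}\,c\,\|\Lambda(\Psi)\|_{L^2_w}^2 + n^{-1}\bigl\|(\pa_Y\Lambda(\Psi),\a\Lambda(\Psi))\bigr\|_{L^2_w}^2 \lesssim (\text{Im}\,c)^{-1}\|h\|_{L^2_w}^2,
\end{align*}
from which the $\Lambda(\Psi)$-part of \eqref{3.1.9} and the full \eqref{3.1.10} follow at once by extracting roots. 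The remaining $\|(\pa_Y\Psi,\a\Psi)\|_{L^2}$-part of \eqref{3.1.9} is then read off from the real part of the same identity, where the Rayleigh contribution $-\int\text{Re}(A^{-1})(|\pa_Y\Psi|^2+\a^2|\Psi|^2)\dd Y$ is uniformly sign-definite because $\text{Re}(A^{-1})\gtrsim 1-\cm^2$, and the remaining terms are controlled by $\|\Lambda(\Psi)\|_{L^2_w}$ and $\|h\|_{L^2_w}$ already bounded in the previous step.

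The main obstacle is the pointwise positivity $\cw_0-U_s\cw_1\gtrsim\cw_0$, and it is precisely here that the threshold $\cm<1/\sqrt{3}$ enters. A direct computation from \eqref{3.1.7} gives
\begin{align*}
\cw_0-U_s\cw_1 = \frac{(1-\cm^2U_s^2)(1-5\cm^2U_s^2)}{H(|\pa_YU_s|^2+|\pa_Y^2U_s|)} + \frac{2\cm^2 U_s(1-\cm^2U_s^2)^2(|\pa_YU_s|^2-U_s\pa_Y^2U_s)}{H^2(|\pa_YU_s|^2+|\pa_Y^2U_s|)^2},
\end{align*}
whose second fraction is non-negative by the concavity \eqref{A4}. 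The first fraction can turn negative once $5\cm^2U_s^2>1$, and the delicate task is to combine it with the second fraction, using the lower bound $H\geq\sigma_1$ from \eqref{A1} together with the quantitative concavity \eqref{A4}, to show that the sum stays bounded below by a positive multiple of $\cw_0\approx|\pa_Y^2U_s|^{-1}$ throughout $\cm\in(0,1/\sqrt{3})$ and $Y\geq 0$. Once this structural positivity is in hand, the remaining steps — commutator estimates in the diffusion term, boundary trace analysis via $\Lambda(\Psi)|_0=0$ (which forces $\pa_Y^2\Psi(0)$ to be a scalar multiple of $\pa_Y\Psi(0)$), and the closing Cauchy--Schwarz/Young applications — are routine and parallel the incompressible Orr--Sommerfeld analysis in \cite{GGN1}.
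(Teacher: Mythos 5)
Your overall strategy is precisely the paper's: the same multiplier $\cw\,\overline{\Lambda(\Psi)}$, the same split into real and imaginary parts, the same use of the boundary conditions $\Psi|_{Y=0}=\Lambda(\Psi)|_{Y=0}=0$, and the same identification of $\cw_0-U_s\cw_1$ as the quantity whose positivity drives the imaginary-part estimate. There is, however, one substantive gap and one organizational looseness.

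The gap: you correctly isolate the pointwise bound $\cw_0-U_s\cw_1\gtrsim|\pa_Y^2U_s|^{-1}$ as the main obstacle and then leave it as ``the delicate task,'' i.e. you grant exactly the step that carries the whole lemma --- it is where the threshold $\cm<1/\sqrt3$ enters and where the structural hypothesis \eqref{A1} is actually used. The resolution is not a quantitative competition between your two fractions mediated by $H\geq\sigma_1$, as you suggest; it is an exact algebraic cancellation. Substituting the expression for $H$ from \eqref{A1} into your formula and collecting the coefficients of $|\pa_Y^2U_s|$ and $|\pa_YU_s|^2$, the apparently dangerous factor $(1-5\cm^2U_s^2)$ disappears identically:
\begin{align*}
\cw_0-U_s\cw_1=\frac{(1-\cm^2U_s^2)}{H^{2}\left(|\pa_YU_s|^2+|\pa_Y^2U_s| \right)^2}\Big\{(1-3\cm^2U_s^2)(1-\cm^2U_s^2)|\pa_Y^2U_s|+8\cm^4U_s^3|\pa_YU_s|^2\Big\},
\end{align*}
and both bracketed terms are non-negative once $\cm^2<1/3$. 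The two-sided bound $\approx|\pa_Y^2U_s|^{-1}$ then follows from $\sigma_1\leq H\leq 2$ together with $|\pa_YU_s|^2\lesssim|\pa_Y^2U_s|$ from \eqref{A4}. Without this computation your argument neither establishes the claimed Mach-number range nor explains why $1/\sqrt3$ (rather than $1/\sqrt5$) is the relevant threshold.

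The looseness: your claim that the imaginary part alone yields $\text{Im}c\,\|\Lambda(\Psi)\|_{L^2_w}^2+n^{-1}\|(\pa_Y\Lambda(\Psi),\a\Lambda(\Psi))\|_{L^2_w}^2\lesssim(\text{Im}c)^{-1}\|h\|_{L^2_w}^2$ is too quick. The commutator $[\Delta_\a,\Lambda](\Psi)$ produced by the diffusion term is controlled by $\|\pa_Y\Lambda(\Psi)\|_{L^2_w}+\|\Lambda(\Psi)\|_{L^2_w}+\|(\pa_Y\Psi,\a\Psi)\|_{L^2}$, and the last of these norms is not yet bounded when you take the imaginary part; the real-part inequality must be multiplied by $\text{Im}c$ and added to the imaginary-part inequality before absorbing, using the smallness of $|c|$, $|c|^2/\text{Im}c$ and $(n\,\text{Im}c)^{-1}$ on $\Sigma_Q$. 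This is fixable, but it is a genuine coupling between the two halves of your argument rather than a sequential read-off.
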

\begin{proof} 
	Taking inner product of \eqref{3.1.4-1} with the multiplier $-\cw\overline{\Lambda(\Psi)}$ leads to
	\begin{align}\label{3.1.11}
	\underbrace{-\frac{i}{n}\int_0^{\infty}\cw\Lambda(\Delta_\a\Psi)\overline{\Lambda(\Psi)}\dd Y}_{J_1}+\underbrace{\int_0^{\infty}-(U_s-c)\cw|\Lambda(\Psi)|^2\dd Y}_{J_2}+\underbrace{\int_0^{\infty}-\Psi\overline{\Lambda(\Psi)}\dd Y}_{J_3}+\underbrace{\int_0^\infty h\cw\overline{\Lambda(\Psi)}\dd Y}_{J_4}=0.
	\end{align}
	Now we estimate $J_1-J_4$ separately. Let us consider $J_3$ first. By integrating by parts and using the boundary condition $\Psi|_{Y=0}=0,$ we  obtain
	\begin{align}\label{3.1.12}
	J_3=\int_0^\infty\bar{A}^{-1}|\pa_Y\Psi|^2+\a^2|\Psi|^2\dd Y.
	\end{align}
	Recalling \eqref{2.1.4} about the definition of $A(Y)$, we have 
	\begin{align}
	\b{A}^{-1}=(1-\cm^2U_s^2)^{-1}-2\cm^2U_s(1-\cm^2U_s^2)^{-2}\b{c}+O(1)|c|^2.\nonumber
	\end{align}
	With this identity, the assumption $\cm\in (0,1)$ and \eqref{A0} for positivity of $U_s$, we can 
	deduce from \eqref{3.1.12} that
	\begin{align}\label{3.1.14-1}
	\text{Re}J_3\gtrsim \a^2\|\Psi\|_{L^2}^2+(1-O(1)|c|)\|\pa_Y\Psi\|_{L^2}^2,
\end{align}
and
\begin{align}\label{3.1.14}
\text{Im}J_3\gtrsim \text{Im}c\|\cm|U_s|^{\f12}\pa_Y\Psi\|_{L^2}^2-O(1)|c|^2\|\pa_Y\Psi\|_{L^2}^2,
\end{align}	where the constants may depend on $\cm$ but not on either $\vep$ or $c$.
	
	For $J_2$, we obtain from the expansion \eqref{ew} and bound \eqref{3.1.8} that
	\begin{align}\label{3.1.15}
	-(U_s-c)\cw&=-U_s\cw_0+\left(\cw_0-U_s\cw_1\right)c+O(1)|c|^2|\pa_Y^2U_s|^{-1}.	\end{align}
	Using the explicit formula \eqref{3.1.7} of $\cw_0$ and $\cw_1$ gives 
	\begin{equation}\label{3.1.15-2}
	\begin{aligned}
	\cw_0-U_s\cw_1&=\frac{(1-\cm^2U_s^2)}{H^{2}\left(|\pa_YU_s|^2+|\pa_Y^2U_s| \right)}\left\{(1-5\cm^2U_s^2)H+\frac{2\cm^2(1-\cm^2U_s^2)(U_s|\pa_YU_s|^2-U_s^2\pa_Y^2U_s)}{|\pa_YU_s|^2+|\pa_Y^2U_s|}\right\}\\
	&=\frac{(1-\cm^2U_s^2)}{H^{2}\left(|\pa_YU_s|^2+|\pa_Y^2U_s| \right)^2}\bigg\{(1-5\cm^2U_s^2)\left[(1-\cm^2U_s^2)|\pa_Y^2U_s|-2\cm^2U_s|\pa_YU_s|^2\right]\\
	&\qquad\qquad\qquad+2\cm^2(1-\cm^2U_s^2)(U_s|\pa_YU_s|^2+U_s^2|\pa_Y^2U_s|)\bigg\}\\
	&=\frac{(1-\cm^2U_s^2)}{H^{2}\left(|\pa_YU_s|^2+|\pa_Y^2U_s| \right)^2}\bigg\{(1-3\cm^2U_s^2)(1-\cm^2U_s^2)|\pa_Y^2U_s|+8\cm^4U_s^3|\pa_YU_s|^2\bigg\}.
	\end{aligned}
	\end{equation}
	Since $\cm\in (0,\f1{\sqrt{3}})$, we have $(1-3\cm^2U_s^2)\geq (1-3\cm^2)>0.$ Thus by  \eqref{A4} and \eqref{3.1.15-2}, it holds that 
	\begin{align}\label{3.1.15-1}
	C_1|\pa_Y^2U_s|^{-1}\leq \cw_0-U_s\cw_1\leq C_2 |\pa_Y^2U_s|^{-1},
	\end{align}
	where the positive constants $C_1$ and $C_2$ are uniform in $\vep$ and $c$. Therefore, taking real and imaginary part of $J_2$ respectively and using the bounds \eqref{3.1.8}, \eqref{3.1.15} and \eqref{3.1.15-1} yield
	\begin{align}\label{3.1.16-1}
	\left|\text{Re}J_2\right|\lesssim C\|\Lambda(\Psi)\|_{L^2_w}^2,
	\end{align}
and
	\begin{align}\label{3.1.16}
	\text{Im}J_2\gtrsim \left(\text{Im}c-O(1)|c|^2\right)\|\Lambda(\Psi)\|_{L^2_w}^2.
	\end{align}
	
	For $J_1$, we rewrite
	\begin{align}\label{3.1.17}
	J_1=\f{-i}{n}\int_0^\infty \cw\Delta_\a\Lambda(\Psi)\overline{\Lambda(\Psi)}\dd Y+\f{i}{n}\int_0^\infty \cw[\Delta_\a,\Lambda](\Psi) \overline{\Lambda(\Psi)}\dd Y:=J_{11}+J_{12},
	\end{align}
	where  $[\Delta_\a,\Lambda](\Psi)$ is  the commutator $\Delta_\a\left[\Lambda(\Psi)\right]-\Lambda\left[\Delta_\a(\Psi)\right]$. By integrating by parts and using the boundary condition $\Lambda(\Psi)|_{Y=0}=0,$ we obtain
	\begin{align}\label{3.1.18}
	J_{11}=\frac{i}{n}\int_0^\infty \cw\left(|\pa_Y\Lambda(\Psi)|^2+\a^2|\Lambda(\Psi)|^2\right)\dd Y+\frac{i}{n}\int_0^\infty\pa_Y\cw\pa_Y\Lambda(\Psi)\overline{\Lambda(\Psi)}\dd Y.
	\end{align}
	Then by  \eqref{A3} and \eqref{3.1.8}, we have 
	$$
	\begin{aligned}
	\left\|\pa_Y\cw\pa_Y^2U_s\right\|_{L^\infty}&=\|\cw^{2}\pa_Y^2(A^{-1}\pa_YU_s)\pa_Y^2U_s\|_{L^\infty}\\
	&\leq \|\cw\pa_Y^2U_s\|^2_{L^\infty}\left( \|\pa_YU_s\|_{L^\infty}+\||\pa_Y^2U_s|^{-1}|\pa_YU_s|^2\|_{L^\infty}+\||\pa_Y^2U_s|^{-1}|\pa_Y^3U_s\|_{L^\infty}\right)\leq C.
	\end{aligned}
	$$
	Thus the last integral on the right hand side of \eqref{3.1.18} is bounded by
	\begin{align}\label{3.1.19}
	\left|\frac{i}{n}\int_0^\infty\pa_Y\cw\pa_Y\Lambda(\Psi)\overline{\Lambda(\Psi)}\dd Y\right|&\lesssim \f1n\|\pa_Y\cw\pa_Y^2U_s\|_{L^\infty}\|\pa_Y\Lambda(\Psi)\|_{L^2_w}\|\Lambda(\Psi)\|_{L^2_w}\nonumber\\
	&\lesssim \frac{1}{n}\|\pa_Y\Lambda(\Psi)\|_{L^2_w}\|\Lambda(\Psi)\|_{L^2_w}.
	\end{align}
	By taking real and imaginary parts of $J_{11}$ respectively and using \eqref{3.1.19}, we deduce that
	\begin{align}
	\left|\text{Re}J_{11}\right|&\lesssim \frac{1}{n}\int_0^\infty |\text{Im}\cw|\left(|\pa_Y\Lambda(\Psi)|^2+\a^2|\Lambda(\Psi)|^2 \right)\dd Y +\f1n\|\pa_Y\Lambda(\Psi)\|_{L^2_w}\|\Lambda(\Psi)\|_{L^2_w},\nonumber\\
	&\lesssim \frac{|c|}{n}\|(\pa_Y\Lambda(\Psi),\a\Lambda(\Psi))\|_{L^2_w}^2+\f1n\|\pa_Y\Lambda(\Psi)\|_{L^2_w}\|\Lambda(\Psi)\|_{L^2_w},
	\label{3.1.20}
	\end{align}
	and
	\begin{align}
	\text{Im}J_{11}&\gtrsim \f1n\int_0^\infty \text{Re}\cw\left(|\pa_Y\Lambda(\Psi)|^2+\a^2|\Lambda(\Psi)|^2 \right)\dd Y-\f1n\|\pa_Y\Lambda(\Psi)\|_{L^2_w}\|\Lambda(\Psi)\|_{L^2_w}\nonumber\\
	&\gtrsim \frac{1}{n}\|(\pa_Y\Lambda(\Psi),\a\Lambda(\Psi))\|_{L^2_w}^2-\f1n\|\pa_Y\Lambda(\Psi)\|_{L^2_w}\|\Lambda(\Psi)\|_{L^2_w},\label{3.1.21}
	\end{align}
	where we have used the fact that
	\begin{align}|
	\text{Im}\cw|\lesssim |\text{Im}c|\cw_1+O(1)|c|^2|\pa_Y^2U_s|^{-1}\lesssim |c||\pa_Y^2U_s|^{-1} \nonumber
	\end{align}
and
\begin{align}
	\text{Re}\cw\gtrsim \cw_0-|c||\cw_1|-O(1)|c|^2||\pa_Y^2U_s|^{-1}\gtrsim |\pa_Y^2U_s|^{-1}\nonumber
	\end{align}
	by the expansion  \eqref{ew}.
	
	Next we estimate $J_{12}$. Recall \eqref{3.1.5} the definition of $\Lambda$. We have
	\begin{align}
	[\Delta_\a,\Lambda](\Psi)&=\Delta_\a[\Lambda(\Psi)]-\Lambda[\Delta_\a(\Psi)]=\pa_Y^3(A^{-1}\pa_Y\Psi)-\pa_Y(A^{-1}\pa_Y^3\Psi)\nonumber\\
	&=2\pa_Y(A^{-1})\pa_Y^3\Psi+3\pa_Y^2(A^{-1})\pa_Y^2\Psi+\pa_Y^3(A^{-1})\pa_Y\Psi.\label{3.1.22-2}
	\end{align}
	We rewrite $\pa_Y^2\Psi$ and $\pa_Y^3\Psi$ as
	\begin{equation}
	\begin{aligned}\nonumber
	\pa_Y^2\Psi&=A\Lambda(\Psi)+A^{-1}\pa_YA\pa_Y\Psi+\a^2A\Psi,\\
	\pa_Y^3\Psi&=A\pa_Y\Lambda(\Psi)+2\Lambda(\Psi)\pa_YA+\pa_Y\Psi\left( A^{-1}\pa_Y^2A+\a^2A\right)+2\a^2\Psi\pa_YA.
	\end{aligned}
	\end{equation}
By $\a\in (0,1)$, it holds that
	\begin{align}\label{3.1.22}
	|\pa_Y^2\Psi|\lesssim |\Lambda(\Psi)|+|\pa_Y\Psi|+\a|\Psi|,~|\pa_Y^3\Psi|\lesssim |\pa_Y\Lambda(\Psi)|+ |\Lambda(\Psi)|+|\pa_Y\Psi|+\a|\Psi|.
	\end{align}
By \eqref{A3}, we have
\begin{equation}
\begin{aligned}
&\left|\pa_Y(A^{-1})\right|\leq C|\pa_YU_s|,~\left|\pa_Y^2(A^{-1})\right|\leq C|\pa_Y^2U_s|+C|\pa_YU_s|^2\leq C|\pa_YU_s|,\\
&\left|\pa_Y^3(A^{-1})\right|\leq C|\pa_Y^3U_s|+C|\pa_Y^2U_s\pa_YU_s|+C|\pa_YU_s|^3\leq C|\pa_YU_s|.\label{3.1.22-3}
\end{aligned}
\end{equation}
	Then applying the bounds \eqref{3.1.22} and \eqref{3.1.22-3} to \eqref{3.1.22-2} gives
	\begin{align}
	\left|[\Delta_\a,\Lambda](\Psi)\right|\lesssim |\pa_YU_s|\left( |\pa_Y\Lambda(\Psi)|+ |\Lambda(\Psi)|+|\pa_Y\Psi|+\a|\Psi| \right),\nonumber
	\end{align}
which by \eqref{A4} implies
\begin{align}
 	\left\|[\Delta_\a,\Lambda](\Psi)\right\|_{L^2_w}&\lesssim \||\pa_Y^2U_s|^{-\f12}\pa_YU_s\|\bigg( \|\pa_Y\Lambda(\Psi)\|_{L^2}+ \|\Lambda(\Psi)\|_{L^2}+\|\pa_Y\Psi\|_{L^2}+\a\|\Psi\|_{L^2} \bigg)\nonumber\\
 	& \lesssim \|\pa_Y\Lambda(\Psi)\|_{L^2}+ \|\Lambda(\Psi)\|_{L^2}+\|\pa_Y\Psi\|_{L^2}+\a\|\Psi\|_{L^2}.     \label{3.1.22-1}
\end{align}
	Substituting \eqref{3.1.22-1} into $J_{12}$ and using Cauchy-Schwarz inequality yield
	\begin{align}\label{3.1.23}
	|J_{12}|&\lesssim \f{1}{n}\|\cw\pa_Y^2U_s\|_{L^\infty}\|\Lambda(\Psi)\|_{L^2_w}\|[\Delta_\a,\Lambda](\Psi)\|_{L^2_w}\nonumber\\
	&\lesssim \f{1}{n}\|\Lambda(\Psi)\|_{L^2_w}\bigg(\|\pa_Y\Lambda(\Psi)\|_{L^2_w}+\|\Lambda(\Psi)\|_{L^2_w}+\|(\pa_Y\Psi,\a\Psi)\|_{L^2}\bigg),
	\end{align}
	where  we have used  \eqref{3.1.8}. By \eqref{3.1.20}, \eqref{3.1.21} and \eqref{3.1.23},
	we can deduce from real and imaginary parts of \eqref{3.1.17}:
	\begin{align}
	\left|\text{Re}J_1\right|\lesssim& \frac{|c|}{n}\|(\pa_Y\Lambda(\Psi),\a\Lambda(\Psi))\|_{L^2_w}^2\nonumber\\
	&+\f1n\|\Lambda(\Psi)\|_{L^2_w}\bigg(\|\pa_Y\Lambda(\Psi)\|_{L^2_w}+\|\Lambda(\Psi)\|_{L^2_w}+\|(\pa_Y\Psi,\a\Psi)\|_{L^2}\bigg),\label{3.1.24}
	\end{align}
and
	\begin{align}
	\text{Im}J_1\gtrsim& \frac{1}{n}\|(\pa_Y\Lambda(\Psi),\a\Lambda(\Psi))\|_{L^2_w}^2\nonumber\\
	&-\f1n\|\Lambda(\Psi)\|_{L^2_w}\bigg(\|\pa_Y\Lambda(\Psi)\|_{L^2_w}+\|\Lambda(\Psi)\|_{L^2_w}+\|(\pa_Y\Psi,\a\Psi)\|_{L^2}\bigg).\label{3.1.25}
	\end{align}
	
	Finally, for $J_4$, we have by Cauchy-Schwarz inequality that
	\begin{align}\label{3.1.26}
	J_4\lesssim \|\cw \pa_Y^2U_s\|_{L^\infty} \|h\|_{L^2_w}\|\Lambda(\Psi)\|_{L^2_w}\lesssim \|h\|_{L^2_w}\|\Lambda(\Psi)\|_{L^2_w}.
	\end{align}
	Thus, we have completed the estimation on $J_1-J_4$.
	
	By taking imaginary part of \eqref{3.1.11} and using previous bounds \eqref{3.1.14}, \eqref{3.1.16},  \eqref{3.1.25} and \eqref{3.1.26} for $J_1-J_4$, we have
	\begin{align}\label{3.1.27}
	&\frac{1}{n}\|(\pa_Y\Lambda(\Psi),\a\Lambda(\Psi))\|_{L^2_w}^2+\text{Im}c\left(\|\Lambda(\Psi)\|_{L^2_w}^2+\|\cm|U_s|^{\f12}\pa_Y\Psi\|_{L^2}^2\right)\nonumber\\
	&\qquad\lesssim \f1n\|\Lambda(\Psi)\|_{L^2_w}\left(\|\pa_Y\Lambda(\Psi)\|_{L^2_w}+\|\Lambda(\Psi)\|_{L^2_w}+\|(\pa_Y\Psi,\a\Psi)\|_{L^2}\right)\nonumber\\
	&\qquad\qquad+|c|^2\left( \|\Lambda(\Psi)\|_{L^2_w}^2+\|(\pa_Y\Psi,\a\Psi)\|_{L^2}^2\right)+\|h\|_{L^2_w}\|\Lambda(\Psi)\|_{L^2_w}.
	\end{align}
	Similarly, taking real part of \eqref{3.1.11} and using \eqref{3.1.14-1}, \eqref{3.1.16-1}, \eqref{3.1.24} and \eqref{3.1.26} give
	\begin{align}\label{3.1.28}
	\|(\pa_Y\Psi,\a\Psi)\|_{L^2}^2\lesssim& \f1n\|\Lambda(\Psi)\|_{L^2_w}\left(\|\pa_Y\Lambda(\Psi)\|_{L^2_w}+\|\Lambda(\Psi)\|_{L^2_w}+\|(\pa_Y\Psi,\a\Psi)\|_{L^2}\right)\nonumber
	\\
	&+\|\Lambda(\Psi)\|_{L^2_w}^2+\frac{|c|}{n}\|(\pa_Y\Lambda(\Psi),\a\Lambda(\Psi))\|_{L^2_w}^2 +\|h\|_{L^2_w}\|\Lambda(\Psi)\|_{L^2_w}.
	\end{align}
	Multiplying estimate \eqref{3.1.28} by $\text{Im}c$ and suitably combining it with \eqref{3.1.27},  we can obtain by Young's inequality that
	\begin{align}
	&\frac{1}{n}\|(\pa_Y\Lambda(\Psi),\a\Lambda(\Psi))\|_{L^2_w}^2+\text{Im}c\left(\|\Lambda(\Psi)\|_{L^2_w}^2+\|(\pa_Y\Psi,\a\Psi)\|_{L^2}^2\right)\nonumber\\
	&\qquad\lesssim \f1n\|\Lambda(\Psi)\|_{L^2_w}\left(\|\pa_Y\Lambda(\Psi)\|_{L^2_w}+\|\Lambda(\Psi)\|_{L^2_w}+\|(\pa_Y\Psi,\a\Psi)\|_{L^2}\right)\nonumber\\
	&\qquad\qquad+|c|^2\left( \|\Lambda(\Psi)\|_{L^2_w}^2+\|(\pa_Y\Psi,\a\Psi)\|_{L^2}^2\right)+\frac{|c|}{n}\|(\pa_Y\Lambda(\Psi),\a\Lambda(\Psi))\|_{L^2_w}^2+\|h\|_{L^2_w}\|\Lambda(\Psi)\|_{L^2_w}\nonumber\\
	&\qquad\leq \left(\frac{1}{2n}+\f{C|c|}{n}\right)\|(\pa_Y\Lambda(\Psi),\a\Lambda(\Psi)\|_{L^2_w}^2+\text{Im}c\left(\f12+\f{C}{n\text{Im}c}+\frac{C|c|^2}{\text{Im}c}\right)\nonumber\\
	&\qquad\qquad\times\left(\|\Lambda(\Psi)\|_{L^2_w}^2+\|(\pa_Y\Psi,\a\Psi)\|_{L^2}^2\right)+\frac{C}{\text{Im}c}\|h\|_{L^2_w}^2.\label{3.1.28-1}
	\end{align}
	By taking $\gamma_4\in (0,\gamma_3)$ suitably small such that $$C|c|\leq C\gamma_4\leq \f14,~\text{and }
	\frac{C}{n\text{Im}c}+\frac{C|c|^2}{\text{Im}c}\leq2C\gamma_4\leq \f14,~ \forall c\in \Sigma_Q,$$ we can absorb the first and second terms on the right hand side of \eqref{3.1.28-1} 
	by 
	 the left hand side. Thus, 
	$$\|\Lambda(\Psi)\|_{L^2_w}+\|(\pa_Y\Psi,\a\Psi)\|_{L^2}\leq \frac{C}{\text{Im}c}\|h\|_{L^2_w},~\text{and }\|(\pa_Y\Lambda(\Psi),\a\Lambda(\Psi))\|_{L^2_w}\leq\frac{Cn^{\f12}}{(\text{Im}c)^{\f12}}\|h\|_{L^2_w}.
	$$
	The above two inequalities immediately imply the estimates \eqref{3.1.9} and \eqref{3.1.10}. The proof of the lemma is completed.
\end{proof}
With the a priori estimates in Lemma \ref{prop3.1}, we can prove the existence, uniqueness and analytic dependence on $c$ of the solution to  the compressible
Orr-Sommerfeld equation \eqref{3.1.4-1} in the following lemma. 

\begin{lemma}[Construction of the solution]\label{lm2.5}
	Let $\cm\in (0,\frac{1}{\sqrt{3}})$, $\a\in (0,1)$ and $c\in \Sigma_Q$. If $\|h\|_{L^2_w}<\infty$, there exists a unique solution $\Psi\in \mathbb{X}$ to \eqref{3.1.4-1} which satisfies estimates  \eqref{3.1.9} and \eqref{3.1.10}. Moreover, if $h(\cdot~;c)$ is analytic in $c$  in $L^2_w(\mathbb{R}_+)$, then $\Psi(\cdot~;c)$ is analytic  in $\mathbb{X}$.
\end{lemma}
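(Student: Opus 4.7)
\emph{Proof plan.} Uniqueness together with the estimates \eqref{3.1.9}-\eqref{3.1.10} are immediate from Lemma \ref{prop3.1}: two $\mathbb{X}$-solutions of \eqref{3.1.4-1} with the same source $h$ differ by a solution of the homogeneous problem, which must vanish by \eqref{3.1.9}. So the substantive task is to construct one solution.

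My plan is a truncation-plus-compactness argument. On a long interval $[0,R]$ I consider the approximate problem
\[
\text{OS}_{\text{CNS}}(\Psi_R)=h \text{ on } (0,R),\qquad \Psi_R(0)=\Lambda(\Psi_R)(0)=\Psi_R(R)=\Lambda(\Psi_R)(R)=0.
\]
This is a fourth-order linear ODE boundary value problem on a bounded interval; its principal part $\tfrac{i}{n}\Lambda\circ\Delta_\a$ has non-vanishing top-order coefficient and the four homogeneous boundary conditions are linearly independent, so standard ODE theory makes $\text{OS}_{\text{CNS}}$ Fredholm of index zero from the appropriate $H^4$-space (with the boundary conditions built in) into $L^2(0,R)$. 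Existence therefore reduces to triviality of the kernel, which is provided by the a priori estimate of Lemma \ref{prop3.1} applied on $[0,R]$: the multiplier identity \eqref{3.1.11} uses only integration by parts, and the additional boundary contributions at $Y=R$ vanish thanks to the artificial conditions $\Psi_R(R)=\Lambda(\Psi_R)(R)=0$, so the bounds \eqref{3.1.9}-\eqref{3.1.10} go through with constants independent of $R$.

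Extending $\Psi_R$ by zero to $\mathbb{R}_+$, the $R$-uniform bounds yield a subsequence $\Psi_{R_k}\rightharpoonup\Psi$ in $\mathbb{X}$. Since the coefficients of $\text{OS}_{\text{CNS}}$ are smooth and the lower-order terms are locally compact, one may pass to the weak limit in each term to see that $\Psi$ solves \eqref{3.1.4-1} on $\mathbb{R}_+$. The boundary conditions $\Psi(0)=\Lambda(\Psi)(0)=0$ survive by trace continuity, and \eqref{3.1.9}-\eqref{3.1.10} pass to the limit by weak lower semicontinuity. The main technical obstacle is ensuring that the weighted estimate on $\Lambda(\Psi_R)$ in $L^2_w(0,R)$ is truly $R$-uniform: the weight $|\pa_Y^2U_s|^{-1}$ grows exponentially by \eqref{A2}, so this is a global bound that must come from the multiplier computation itself rather than from local compactness. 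This is exactly where the pointwise lower bound \eqref{3.1.15-1} for $\cw_0-U_s\cw_1$ — which crucially uses the hypothesis $\cm\in(0,1/\sqrt{3})$ — plays its decisive role; since \eqref{3.1.15-1} is pointwise in $Y$ it is inherited by every truncation $[0,R]$.

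Finally, for analyticity in $c\in\Sigma_Q$: the coefficients of $\text{OS}_{\text{CNS}}(c)$ depend holomorphically on $c$, because $A(Y;c)=1-\cm^2(U_s-c)^2$ stays uniformly invertible on $\Sigma_Q$ by the choice of $\gamma_4$, and the estimates of Lemma \ref{prop3.1} are uniform in $c$ on compact subsets. Combined with uniqueness, this makes $c\mapsto \text{OS}_{\text{CNS}}(c)^{-1}$ a holomorphic map into $\mathcal{L}(L^2_w,\mathbb{X})$ (either by the analytic Fredholm theorem or by a direct operator-valued Morera argument), and composing with an analytic source $h(\cdot;c)$ yields analyticity of $\Psi(\cdot;c)$ in $\mathbb{X}$.
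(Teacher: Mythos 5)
Your existence argument takes a genuinely different route from the paper's. The paper never works with $\Psi$ directly on a truncated domain: it sets $W=\Lambda(\Psi)$, rewrites \eqref{3.1.4-1} as a problem for $W$ involving $\Lambda^{-1}$, and constructs $W$ through a three-stage scheme --- first an auxiliary problem $T_l$ with a large artificial damping $l$ solved by iteration on the Airy operator, then a continuation argument shrinking $l$ to $0$ in steps of size $O(n^{-1})$ using the uniform a priori bound \eqref{3.1.45}, and finally an iteration absorbing the commutator $\frac{i}{n}[\Delta_\a,\Lambda]$. Your truncation-plus-Fredholm-plus-compactness scheme is shorter and is viable for existence: on $[0,R]$ the four boundary conditions kill exactly the boundary terms arising in $J_3$ and $J_{11}$ of \eqref{3.1.11}, so Lemma \ref{prop3.1} does localize with $R$-independent constants, and the Fredholm alternative for a regular fourth-order two-point BVP reduces solvability to the uniqueness you already have. (Extension by zero is not in $H^2$ across $Y=R$, so you should phrase the limit as a diagonal extraction of weak $H^3_{\mathrm{loc}}$ limits, but that is cosmetic.)

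The genuine soft spot is the analyticity step, which is precisely what the paper's more laborious construction is designed to deliver for free (each iterate is analytic in $c$ and the convergence is uniform on $\Sigma_Q$). The analytic Fredholm theorem is not directly applicable on the half-line, and the resolvent-identity/Neumann-series argument you gesture at requires $c\mapsto \text{OS}_{\text{CNS}}(c)$ to be an analytic family of \emph{bounded} operators between two fixed spaces. With solution space $\mathbb{X}$ and data space $L^2_w$ this fails: the $c$-dependence sits in the fourth-order coefficient $A^{-1}(Y;c)$, so $T(c)-T(c_0)$ applied to $\Psi$ contains $\frac{i}{n}\pa_Y\big[(A^{-1}(c)-A^{-1}(c_0))\pa_Y\Delta_\a\Psi\big]$, i.e.\ four derivatives of $\Psi$ measured against the exponentially growing weight $|\pa_Y^2U_s|^{-1/2}$, whereas $\mathbb{X}$ only controls $\pa_Y\Lambda(\Psi)$ in $L^2_w$. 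To repair this you must first use the equation itself to upgrade to a weighted $H^4$-type bound ($\frac1n\Lambda(\Delta_\a\Psi)=h-(U_s-c)\Lambda(\Psi)+\cw^{-1}\Psi\in L^2_w$), take that as the solution space, and only then run the perturbation/Morera argument. As written, the final paragraph asserts the conclusion rather than proving it.
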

\begin{remark}
	By elliptic regularity, the solution $\Psi$ is in $H^4(\mathbb{R}_+)\cap H^1_0(\mathbb{R}_+)$.
\end{remark}
\begin{proof}
	The proof is based on a cascade of approximate process and a continuity argument. First of all, we set $W\eqdef \Lambda(\Psi)$ and reformulate \eqref{3.1.4-1} as
	\begin{equation}
	\left\{
	\begin{aligned}\label{3.1.40}
	&\f{i}{n}\Lambda(\Delta_\a \Lambda^{-1}W)+(U_s-c)W+\cw^{-1}\Lambda^{-1}W=h,~Y>0,\\
	&W|_{Y=0}=0.
	\end{aligned}\right.
	\end{equation}	
	Here the inverse operator $\Lambda^{-1}: L^2(\mathbb{R}_+)\rightarrow H^2(\mathbb{R}_+)\cap H^1_0(\mathbb{R}_+)$ is constructed in Lemma \ref{lm4.1}.  If one can show the solvability of \eqref{3.1.40} in $H^1_w(\mathbb{R}_+)$, then by  Lemma \ref{lm4.1}, $\Psi\eqdef\Lambda^{-1}(W)\in \mathbb{X}$ and it solves the equation \eqref{3.1.4-1}. Now we elaborate the construction of solution to \eqref{3.1.40} in the following three steps.
	
	{\it Step 1.} Fix any parameter $l>0$. We start from an auxiliary problem
	\begin{align}\label{3.1.41}
	T_l(W)\eqdef \frac{i}{n}\Delta_\a W+(U_s-c-il)W+\cw^{-1}\Lambda^{-1}W=h,~W|_{Y=0}=0.
	\end{align}
	We claim that there exists $l_0>0$, such that if $c\in \Sigma_{Q}$ and $\|h\|_{L^2_w}<\infty$, then \eqref{3.1.41} admits a unique solution $W\in H^2_w(\mathbb{R}_+)\cap H^1_0(\mathbb{R}_+)$ and the solution operator $T_{l_0}^{-1}: L^2_w(\mathbb{R}_+)\rightarrow H^2_w(\mathbb{R}_+)$ is analytic in $c$. To prove this claim, we define a sequence of approximate solutions $\{W_k\}_{k=0}^\infty$  by the hierarchy of equations
	\begin{align}
	\left[\text{Airy}-(c+il)\right](W_{k+1})=h-\cw^{-1}\Lambda^{-1}W_k,~ W_{k+1}\big|_{Y=0}=0,~W_0\equiv 0,\label{3.1.42}
	\end{align}
	where  $\text{Airy}\eqdef\frac{i}{n}\Delta_{\a}+U_s: H^2_w(\mathbb{R}_+)\cap H^1_0(\mathbb{R}_+)\rightarrow L^2_w(\mathbb{R}_+)$ is the Airy operator. For any $c\in \Sigma_Q$ and $l>0$, by direct energy method, it is straightforward to check that
	  $c+il$ lies in the resolvent set of Airy operator. Thus by an inductive argument, we can  solve $W_k$ and establish its analytic dependence on $c$ from \eqref{3.1.42}. In order to take the limit $k\rightarrow \infty,$ we need  some uniform estimates. Applying the multiplier $(\pa_Y^2U_s)^{-1}\b{W}_{k+1}$ to \eqref{3.1.42}, we have
	\begin{align}
	&\frac{i}{n}\int_0^\infty (\pa_Y^2U_s)^{-1}\b{W}_{k+1}\Delta_\a W_{k+1} \dd Y+\int_0^\infty (\pa_Y^2U_s)^{-1}(U_s-c-il)|W_{k+1}|^2\dd Y\nonumber\\
	&\qquad=-\int_0^\infty (\pa_Y^2U_s)^{-1}\cw^{-1}\Lambda^{-1}(W_k)\b{W}_{k+1}\dd Y+\int_0^\infty (\pa_Y^2U_s)^{-1}h\b{W}_{k+1}\dd Y.
	\label{3.1.43}
	\end{align}
	By Cauchy-Schwarz inequality, we deduce that
	\begin{align}\label{3.1.43-4}
	\left|\int_0^\infty (\pa_Y^2U_s)^{-1}h\b{W}_{k+1}\dd Y \right|\leq C\|h\|_{L^2_w}\|W_{k+1}\|_{L^2_w}.
	\end{align}
	By using \eqref{3.1.8} and the bound in \eqref{ap1} for $\Lambda^{-1}$, we have
	\begin{align}
	\left|\int_0^\infty (\pa_Y^2U_s)^{-1}\cw^{-1}\Lambda^{-1}(W_k)\b{W}_{k+1}\dd Y\right|&\leq C\||\cw|^{-1}|\pa_Y^2U_s|^{-\f12}\|_{L^\infty}\|W_{k+1}\|_{L^2_w}\|\Lambda^{-1}(W_k)\|_{L^2}\nonumber\\
	&\leq C\|W_{k+1}\|_{L^2_w}\|(1+Y)W_k\|_{L^2}\leq C\|W_{k+1}\|_{L^2_w}\|W_{k}\|_{L^2_w}.\label{3.1.43-1}
	\end{align}
	Integration by parts yields 
	\begin{align}
	\frac{i}{n}\int_0^\infty (\pa_Y^2U_s)^{-1}\b{W}_{k+1}\Delta_\a W_{k+1} \dd Y=&\frac{i}{n}\left(\|\pa_YW_{k+1}\|_{L^2_w}^2+\a^2\|W_{k+1}\|_{L^2_w}^2\right)\nonumber\\
	&+\f{i}{n}\int_0^\infty \f{\pa_Y^3U_s}{(\pa_Y^2U_s)^2}\pa_YW_{k+1}\b{W}_{k+1}\dd Y.\label{3.1.43-2}
	\end{align}
	By \eqref{A3}, the last integral in the above equality is bounded by
	\begin{align}\label{3.1.43-3}
	\left|\f{i}{n}\int_0^\infty \f{\pa_Y^3U_s}{(\pa_Y^2U_s)^2}\pa_YW_{k+1}\b{W}_{k+1}\dd Y\right|&\leq \f{1}{n}\left\|\f{\pa_Y^3U_s}{\pa_Y^2U_s}\right\|_{L^\infty}\|\pa_YW_{k+1}\|_{L^2_w}\|W_{k+1}\|_{L^2_w}\nonumber\\
	&\leq \f{C}{n}\|\pa_YW_{k+1}\|_{L^2_w}\|W_{k+1}\|_{L^2_w}.
	\end{align}
	By taking the  imaginary part of \eqref{3.1.43}, and  using the bounds obtained in \eqref{3.1.43-4}-\eqref{3.1.43-3} with Young's inequality, we have
	\begin{align}
	&\frac{1}{n}\left(\|\pa_YW_{k+1}\|_{L^2_w}^2+\a^2\|W_{k+1}\|_{L^2_w}^2\right)+(\text{Im}c+l)\|W_{k+1}\|_{L^2_w}^2\nonumber\\
	&\qquad\leq C\|W_{k+1}\|_{L^2_w}\left(\|h\|_{L^2_w}+\|W_{k}\|_{L^2_w}+\frac{1}{n}\|\pa_YW_{k+1}\|_{L^2_w}\right)\nonumber\\
	&\qquad\leq\f1{2n}\|\pa_YW_{k+1}\|_{L^2_w}^2+\frac{\text{Im}c+l}{2}\left(1+\f{C}{n(\text{Im}c+l)}\right)\|W_{k+1}\|_{L^2_w}^2+\frac{C}{\text{Im}c+l}\left(\|h\|_{L^2_w}^2+\|W_{k}\|_{L^2_w}^2 \right)\label{3.1.44}.
	\end{align}
	We choose $\gamma_4>0$  smaller if needed so that $\frac{C}{n\text{Im}c}<\frac{1}{2}$, for any $c\in \Sigma_{Q}$. Then  \eqref{3.1.44} gives
	\begin{align}
	\|W_{k+1}\|_{L^2_w}\leq \frac{C}{\text{Im}c+l}\left(\|W_{k}\|_{L^2_w}+\|h\|_{L^2_w}\right),~\|(\pa_YW_{k+1},\a W_{k+1})\|_{L^2_w}\leq \frac{Cn^{\f12}}{(\text{Im}c+l)^{\f12}}\left(\|W_{k}\|_{L^2_w}+\|h\|_{L^2_w}\right).\nonumber
	\end{align}
	
	Now we take the difference $W_{k+1}-W_k$. Similar argument gives
	$$\|W_{k+1}-W_{k}\|_{L^2_w}\leq \frac{C}{\text{Im}c+l}\|W_{k}-W_{k-1}\|_{L^2_w},~\|\pa_YW_{k+1}-\pa_YW_{k}\|_{L^2_w}\leq \frac{Cn^{\f12}}{(\text{Im}c+l)^{\f12}}\|W_{k}-W_{k-1}\|_{L^2_w}.
	$$
	By taking $l$ suitably large, such that $\frac{C}{\text{Im}c+l}\leq \frac{C}{l}\leq \f12$,  $\{W_{k}\}_{k=1}^\infty$ is a Cauchy sequence in $H^1_w(\mathbb{R}_+)$. This implies the existence of a limit function $W=\lim_{k\rightarrow \infty} W_k$ in $ H^1_w(\mathbb{R}_+)$
	that is the solution to \eqref{3.1.41}. By the elliptic regularity,  $W_k$ converges to $W$ in $H^2_w(\mathbb{R}_+).$ Moreover, by induction, each $W_k$ is analytic in $c$, so is $W$ by uniform convergence. This justifies the claim and step 1 is completed.
	
	{\it Step 2.} (Bootstrap from $T_{l_0}^{-1}$ to $T_{0}^{-1}$). Consider the equation \eqref{3.1.41} for any fix $l\in [0,l_0]$ with $W^l$ as its solution. Applying the multiplier $-\cw\b{W}^l$  and using the same argument as in Lemma \ref{prop3.1}, we can show that $W^l$ satisfies
	\begin{align}\label{3.1.45}
	\|W^l\|_{L^2_w}\leq \frac{C}{\text{Im}c}\|h\|_{L^2_w},~ \|(\pa_YW^l,\a W^l)\|_{L^2_w}\leq \frac{Cn^{\f12}}{(\text{Im}c)^{\f12}}\|h\|_{L^2_w},~\forall c\in \Sigma_Q,
	\end{align}
	where the constant $C$ is uniform in $l\in [0,l_0]$. Now we take $l_1=l_0-\lambda$ for some fixed constant $0<\lambda<2C^{-1}\gamma_4^{-1}n^{-1}$ and construct the solution $W^{l_1}=T_{l_1}^{-1}(h)$ through the following iteration
	$$W^{l_1}_{k+1}=T_{l_0}^{-1}\left(- i\lambda W_k^{l_1}+h\right),~ W_0^{l_1}\equiv0.
	$$
	Applying the a priori estimate \eqref{3.1.45} to $W_{k+1}^{l_1}-W_{k}^{l_1}$ yields that
	\begin{align}
	\|W_{k+1}^{l_1}-W_{k}^{l_1}\|_{L^2_w}&\leq \f{C\lambda}{\text{Im}c}\|W_k^{l_1}-W_{k-1}^{l_1}\|_{L^2_w}\leq \frac{1}{2\gamma_4n\text{Im}c}\|W_k^{l_1}-W_{k-1}^{l_1}\|_{L^2_w}\leq \frac{1}{2}\|W_k^{l_1}-W_{k-1}^{l_1}\|_{L^2_w},\nonumber\\
	\|\pa_YW_{k+1}^{l_1}-\pa_YW_k^{l_1}\|_{L^2_w}&\leq \frac{C\lambda n^{\f12}}{(\text{Im}c)^{\f12}}\|W_{k}^{l_1}-W_{k-1}^{l_1}\|_{L^2_w}\leq C\|W_{k}^{l_1}-W_{k-1}^{l_1}\|_{L^2_w},~\forall c\in \Sigma_Q.\nonumber
	\end{align}
Hence, $\{W_{k}^{l_1}\}_{k=0}^\infty$ is a Cauchy sequence in $H^1_w$ and it has a limit $W^{l_1}=\lim_{k\rightarrow\infty}W^{l_1}_k$. It is straightforward to check that $W^{l_1}$ is in $H^2_{w}(\mathbb{R}_+)\cap H^1_0(\mathbb{R}_+)$ and  satisfies \eqref{3.1.41} with $l=l_1$.  Moreover, from the previous step we have already shown that each $W_{k}^{l_1}$ is analytic in $c$. Thus analyticity of $W^{l_1}$ follows from the uniform convergence. Thus we have completed the construction solution operator $T^{-1}_{l_1}$. Noting that $W^{l_1}$ satisfies the a priori  estimate \eqref{3.1.45}, we can take $l_2=l_1-\lambda$ and construct the solution operator $T_{l_2}^{-1}$ in the same way. Repeating the same procedure, we can eventually establish the existence and analytic dependence on $c$ of the solution operator $T_0^{-1}$.
	
	{\it Step 3.} We now solve the original system
	\eqref{3.1.40} by using the following iteration
	\begin{align}\nonumber
	T_{0}(W_{k+1})=h+\frac{i}{n}\left[\Delta_\a,\Lambda \right](\Lambda^{-1}(W_{k})),~Y>0,~ W_{k+1}|_{Y=0}=0,~ W_0(Y)\equiv 0.
	\end{align}
	By using the bounds in  \eqref{3.1.22-1} and  \eqref{ap1}
	on the commutator $[\Delta_\a,\Lambda]$  and $\Lambda^{-1}$ respectively,  we have
	$$
	\begin{aligned}
	\left\|\left[\Delta_\a,\Lambda \right](\Lambda^{-1}(W_{k}))\right\|_{L^2_w}&\leq C\left(\|\pa_YW_{k}\|_{L^2}+\|W_k\|_{L^2}+\|(\pa_Y\Lambda^{-1}W_k,\a\Lambda^{-1}W_k)\|_{L^2}   \right)\\
	&\leq C\left(\|\pa_YW_k\|_{L^2}+\|W_k(1+Y)\|_{L^2}\right)\leq C\|W_k\|_{H^1_w}.
	\end{aligned}
	$$
	Then applying the a priori bound \eqref{3.1.45} to $W_{k+1}-W_{k}$ gives
	$$\|W_{k+1}-W_{k}\|_{L^2_w}\leq \frac{C}{n\text{Im}c}\|W_{k}-W_{k-1}\|_{H^1_w},~\|\pa_YW_{k+1}-\pa_YW_{k}\|_{L^2_w}\leq \frac{C}{n^{\f12}(\text{Im}c)^{\f12}}\|W_{k}-W_{k-1}\|_{H^1_w}.
	$$
	By taking $\gamma_4>0$  smaller if needed such that $\frac{C}{n\text{Im}c}+\frac{C}{n^{\f12}\text{Im}c^{\f12}}\leq C\gamma_4^{\f12}(1+\g_4^{\f12})<\f12$ for $c\in \Sigma_Q,$ we show that $\{W_k\}_{k=0}^\infty$ is a Cauchy sequence in $H^1_w(\mathbb{R}_+)$. Let $W:=\lim_{k\rightarrow\infty}W_k$. By the elliptic regularity and $H^1_w$-convergence, it is straightforward to check that $W_k$ converges to $W$ in $H^2_w(\mathbb{R}_+)$ and $W$ is a solution to \eqref{3.1.40}. Moreover, since each $W_k$ is analytic in $c$ and the convergence is uniform in $c\in \Sigma_Q$, we conclude that $W$ is analytic in $c$. The uniqueness of solution follows from the a priori estimates obtained in Lemma \ref{prop3.1}. Then the proof of the lemma  is completed.	
\end{proof}

Now let $\Psi$ be the solution to \eqref{3.1.4-1} with $h=\Omega(s_1,s_2)$ and $\Omega$ defined in \eqref{add3.1.5}. In terms of the fluid variables $(\varrho,\cu,\cv)$ given in  \eqref{3.1.2} and \eqref{3.1.3}, we have the following proposition for the solvability of the quasi-compressible approximation system \eqref{3.1.1}.

\begin{proposition}[Solvability of quasi-compressible system]\label{prop3.2}
 Under the same assumption on $\cm$, $\a$ and $c$ as in Lemma \ref{lm2.5}, if $\vec{s}=(s_1,s_2)\in H^1(\mathbb{R}_+)^2$ and $\|\Omega(s_1,s_2)\|_{L^2_w}<\infty$,  there exists a solution $(\varrho,\cu,\cv)\in H^2(\mathbb{R}_+)^3$ to the quasi-compressible approximation system \eqref{3.1.1}. Moreover, $(\varrho,\cu,\cv)$ satisfies the following estimates:
	\begin{align}
	&\|\cu\|_{H^1}+\|(\cm^{-2}\varrho,\cv)\|_{H^2}+\a^{-1}\|\dv(\cu,\cv)\|_{H^1}\nonumber\\
	&\qquad\qquad\lesssim \frac{1}{\text{Im}c}\|\Omega(s_1,s_2)\|_{L^2_w}+\f1\a\|s_1\|_{L^2}+\|s_2\|_{L^2}+\|\dv(s_1,s_2)\|_{L^2},\label{3.1.29}
	\end{align}
	and
	\begin{align}
	\|\pa_Y^2\cu\|_{L^2}\lesssim \frac{n^{\f12}}{(\text{Im}c)^{\f12}}\|\Omega(s_1,s_2)\|_{L^2_w}+\f1\a\|s_1\|_{L^2}+\|s_2\|_{L^2}+\|\dv(s_1,s_2)\|_{L^2}. \label{3.1.30}
	\end{align}
	Furthermore, if both $\vec{s}(\cdot~;c)$ and $\Omega(\cdot~;c)$ are analytic in $c$  in $H^1(\mathbb{R}_+)$ and $L^2_w(\mathbb{R}_+)$ respectively, then $(\varrho,\cu,\cv)(\cdot~;c)$ is analytic in $c$  in $H^2(\mathbb{R}_+)$. 
\end{proposition}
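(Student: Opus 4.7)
The plan is to reduce \eqref{3.1.1} to the compressible Orr--Sommerfeld problem \eqref{3.1.4-1}, which has already been solved in Lemma~\ref{lm2.5}. Since $\Omega(s_1,s_2) \in L^2_w(\mathbb{R}_+)$ by hypothesis, Lemma~\ref{lm2.5} furnishes a unique $\Psi \in \mathbb{X}$ satisfying $\mathrm{OS}_{\mathrm{CNS}}(\Psi) = \Omega(s_1,s_2)$ with $\Psi|_{Y=0} = \Lambda(\Psi)|_{Y=0} = 0$, together with the bounds \eqref{3.1.9}--\eqref{3.1.10}. I would then reconstruct the fluid unknowns by setting
\begin{equation*}
\cv := -i\a\Psi, \quad \cm^{-2}\varrho := -A^{-1}\Big[\tfrac{i}{n}\Delta_\a \pa_Y\Psi + (U_s-c)\pa_Y\Psi - \Psi\pa_YU_s + (i\a)^{-1}s_1\Big], \quad \cu := \pa_Y\Psi - (U_s-c)\varrho.
\end{equation*}
The continuity equation $\eqref{3.1.1}_1$ then holds by direct computation; $\eqref{3.1.1}_2$ is exactly the relation used to define $\varrho$, after solving for it by means of $(U_s-c)^2 - \cm^{-2} = -\cm^{-2}A$; and $\eqref{3.1.1}_3$ reduces to $\mathrm{OS}_{\mathrm{CNS}}(\Psi) = \Omega(s_1,s_2)$ after substituting the formula for $\varrho$.

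For \eqref{3.1.29}, I would translate the $\Psi$-bounds of Lemma~\ref{prop3.1} back to the fluid variables. The $H^1$-bounds on $\cv = -i\a\Psi$ are immediate from \eqref{3.1.9}, and $\|\pa_Y^2\cv\|_{L^2}$ is recovered from $\|\Lambda(\Psi)\|_{L^2_w}$ via the algebraic identity $A^{-1}\pa_Y^2\Psi = \Lambda(\Psi) + \a^2\Psi - \pa_Y A^{-1}\,\pa_Y\Psi$, using $L^2_w \hookrightarrow L^2$ (valid because $|\pa_Y^2 U_s|$ is uniformly bounded). The $L^2$-bound on $\varrho$ follows from its defining formula, while $\pa_Y\varrho$ is more cleanly obtained by rearranging $\eqref{3.1.1}_3$ as $\cm^{-2}\pa_Y\varrho = \sqrt{\vep}\Delta_\a \cv - i\a(U_s-c)\cv - s_2$. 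The bounds on $\cu$ then follow from $\cu = \pa_Y\Psi - (U_s-c)\varrho$, and the divergence control from continuity, $\dv(\cu,\cv) = -i\a(U_s-c)\varrho$.

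The main obstacle is the $H^2$-control of $\varrho$ and the bound \eqref{3.1.30} on $\pa_Y^2\cu$, because differentiating \eqref{3.1.3} twice naively calls for $\pa_Y^4\Psi$, which is beyond what Lemma~\ref{prop3.1} controls. The key trick --- this is the ``regularizing effect on density'' advertised in the introduction --- is to form $i\a \cdot \eqref{3.1.1}_2 + \pa_Y \cdot \eqref{3.1.1}_3$ and use continuity to annihilate the viscous divergence, yielding
\begin{equation*}
\cm^{-2}\pa_Y^2\varrho = \a^2(U_s-c)\cu + \a^2\cm^{-2}\varrho - i\a\cv\pa_YU_s - i\a\pa_Y[(U_s-c)\cv] - \dv(s_1,s_2),
\end{equation*}
so that $\|\pa_Y^2\varrho\|_{L^2}$ is controlled by already-estimated quantities plus $\|\dv(s_1,s_2)\|_{L^2}$, matching exactly the source term appearing in the conclusion. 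For $\pa_Y^2\cu$, using $\eqref{3.1.1}_2$ directly would create a forbidden $\vep^{-1/2}\|s_1\|_{L^2}$, so instead I would use the identity $\pa_Y^2\cu = \pa_Y^3\Psi - \pa_Y^2[(U_s-c)\varrho]$, rewrite $A^{-1}\pa_Y^3\Psi$ in terms of $\pa_Y\Lambda(\Psi)$ and lower-order derivatives of $\Psi$ by differentiating the definition of $\Lambda$, and invoke \eqref{3.1.10} to extract the sharp $n^{1/2}(\mathrm{Im}\,c)^{-1/2}$ factor. Analyticity of $(\varrho,\cu,\cv)$ in $c$ follows from analyticity of $\Psi$ (from Lemma~\ref{lm2.5}) together with the analytic $c$-dependence of the reconstruction formulas through $U_s - c$ and $A(\cdot\,;c)$.
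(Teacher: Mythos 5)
Your proposal is correct and follows essentially the same route as the paper: reconstruct $(\varrho,\cu,\cv)$ from the stream function $\Psi$ of Lemma \ref{lm2.5} via \eqref{3.1.2}--\eqref{3.1.3}, obtain $\pa_Y\varrho$ from the third equation (the paper's \eqref{3.1.36}), recover $\pa_Y^2\varrho$ from the elliptic identity for $\Delta_\a\varrho$ (your divergence-of-momentum derivation is algebraically identical to the paper's \eqref{3.1.37-1}), and get $\pa_Y^2\cu$ by expressing $\pa_Y^3\Psi$ through $\pa_Y\Lambda(\Psi)$ and invoking \eqref{3.1.10}. The only detail worth making explicit is that the $L^2$ bound on $\varrho$ uses the Hardy inequality $\|Y^{-1}\Psi\|_{L^2}\lesssim\|\pa_Y\Psi\|_{L^2}$ to handle the $\Psi\pa_YU_s$ term without paying a factor of $\a^{-1}$.
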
 
\begin{remark}
	If $\dv(s_1,s_2)=0,$ then by \eqref{3.1.2}, \eqref{3.1.37-1} and regularity of $\Psi$ it is easy to deduce that $(\varrho,\cu,\cv)\in H^3(\mathbb{R}_+)^3$. This reveals the elliptic structure for linearized compressible Navier-Stokes equations around the subsonic boundary layer profile.
\end{remark}
\begin{proof}
	It is straightforward to check that $(\varrho,\cu,\cv)$ satisfies \eqref{3.1.1}. The analyticity directly follows from Lemma \ref{prop3.1}. It remains to show the estimates \eqref{3.1.29} and \eqref{3.1.30}. Firstly, by using bounds given in \eqref{3.1.9}, \eqref{3.1.10} with $h=\Omega(s_1,s_2)$ and \eqref{3.1.22}, we obtain that
	\begin{align}
	\|\pa_Y^2\Psi\|_{L^2}+\|(\pa_Y\Psi,\a\Psi)\|_{L^2}\lesssim \|\Lambda(\Psi)\|_{L^2}+\|(\pa_Y\Psi,\a\Psi)\|_{L^2}\lesssim \frac{1}{\text{Im}c}\|\Omega(s_1,s_2)\|_{L^2_w},
	\label{3.1.32}
	\end{align}
and
   \begin{align}
	\|\pa_Y^3\Psi\|_{L^2}&\lesssim \|\pa_Y\Lambda(\Psi)\|_{L^2}+ \|\Lambda(\Psi)\|_{L^2}+\|(\pa_Y\Psi,\a\Psi)\|_{L^2}\nonumber\\
	&\lesssim \frac{n^{\f12}}{(\text{Im}c)^{\f12}}\left(1+\frac{1}{n^{\f12}(\text{Im}c)^{\f12}}\right)\|\Omega(s_1,s_2)\|_{L^2}\lesssim \frac{n^{\f12}}{(\text{Im}c)^{\f12}}\|\Omega(s_1,s_2)\|_{L^2_w},\label{3.1.31}
	\end{align}
where we have used $n\text{Im}c\gtrsim 1$ for $c\in \Sigma_Q$. Then by $\cv=-i\a\Psi$, \eqref{3.1.32} and $\a\in (0,1)$, it holds that \begin{align}\label{3.1.33}
	\|\cv\|_{H^2}\lesssim \|\pa_Y^2\Psi\|_{L^2}+\|(\pa_Y\Psi,\a\Psi)\|_{L^2}\lesssim  \frac{1}{\text{Im}c}\|\Omega(s_1,s_2)\|_{L^2_w}.
	\end{align}
	
	Next we estimate $\rho$. Recall \eqref{3.1.3} for its representation. Since $\Psi|_{Y=0}=0,$ we can use Hardy inequality $\|Y^{-1}\Psi\|_{L^2}\leq 2\|\pa_Y\Psi\|_{L^2},$ and the bounds given in \eqref{3.1.32}, \eqref{3.1.31} to obtain
	\begin{align}\label{3.1.34}
	m^{-2}\|\varrho\|_{L^2}&\lesssim \f1n\|\pa_Y^3\Psi\|_{L^2}+(1+\f{\a^2}{n})\|\pa_Y\Psi\|_{L^2}+\|Y^{-1}\Psi\|_{L^2}\|Y\pa_YU_s\|_{L^\infty}+\f1\a\|s_1\|_{L^2}\nonumber\\
	&\lesssim \frac{1}{n}\|\pa_Y^3\Psi\|_{L^2}+\|\pa_Y\Psi\|_{L^2}+\f1\a\|s_1\|_{L^2}\nonumber\\
	&\lesssim\left( \f{1}{n^{\f12}(\text{Im}c)^{\f12}}+\frac{1}{\text{Im}c} \right)\|\Omega(s_1,s_2)\|_{L^2_w}+\f1\a\|s_1\|_{L^2}\nonumber\\
	&\lesssim \frac{1}{\text{Im}c}\|\Omega(s_1,s_2)\|_{L^2_w}+\f1\a\|s_1\|_{L^2}.
	\end{align}
 For $\pa_Y\varrho,$ differentiating \eqref{3.1.3} yields that
	\begin{align}\label{3.1.36}
	-\cm^{-2}\pa_Y\varrho&=\text{OS}_{\text{CNS}}(\Psi)+\a^2\left(\f{i}{n}\Delta_\a \Psi+(U_s-c)\Psi \right)-\f{i}{\a}\pa_Y(A^{-1}s_1)\nonumber\\
	&=\Omega(s_1,s_2)+\a^2\left(\f{i}{n}\Delta_\a \Psi+(U_s-c)\Psi \right)-\frac{i}{\a}\pa_Y(A^{-1}s_1)\nonumber\\
	&=s_2+\a^2\left(\f{i}{n}\Delta_\a \Psi+(U_s-c)\Psi \right),
	\end{align}
	where we have used the equation \eqref{3.1.4} in  second identity. Taking $L^2$-norm in \eqref{3.1.36} and using bound \eqref{3.1.32}, we can further deduce that
	\begin{align}\label{3.1.35}
	\cm^{-2}\|\pa_Y\varrho\|_{L^2}&\lesssim \|s_2\|_{L^2}+\f{\a^2}{n}\|\pa_Y^2\Psi\|_{L^2}+\a(1+\frac{\a^2}{n})\|\a\Psi\|_{L^2}\nonumber\\
	&\lesssim \|s_2\|_{L^2}+\|\pa_Y^2\Psi\|_{L^2}+\a\|\Psi\|_{L^2}\nonumber\\
	&\lesssim \frac{1}{\text{Im}c}\|\Omega(s_1,s_2)\|_{L^2_w}+\|s_2\|_{L^2}.
	\end{align} 
	Now we estimate $\pa_Y^2\varrho.$  By using \eqref{3.1.3} and \eqref{3.1.36}, we have
\begin{align}\label{3.1.37-1}
-\cm^{-2}\Delta_\a\varrho=\dv(s_1,s_2)+\a^2(U_s-c)^2\varrho+2\a^2\Psi\pa_YU_s.
\end{align}
	Then taking $L^2$ norm leads to
	\begin{align}\label{3.1.37}
	\cm^{-2}\|\pa_Y^2\varrho\|_{L^2}&\lesssim \|\dv(s_1,s_2)\|_{L^2}+\a^2(1+\cm^{-2})\|\varrho\|_{L^2}+\a^2\|\Psi\|_{L^2}\nonumber\\
	&\lesssim \|\dv(s_1,s_2)\|_{L^2}+\cm^{-2}\|\varrho\|_{L^2}+\a\|\Psi\|_{L^2}\nonumber\\
	&\lesssim \frac{1}{\text{Im}c}\|\Omega(s_1,s_2)\|_{L^2_w}+\f1\a\|s_1\|_{L^2}+\|\dv(s_1,s_2)\|_{L^2}.
	\end{align}
	Here we have used \eqref{3.1.32} and \eqref{3.1.34} in the last inequality. Therefore, $H^2$-estimate of $\varrho$ follows from \eqref{3.1.34}, \eqref{3.1.35} and \eqref{3.1.37}. Since $\dv(\cu,\cv)=-i\a(U_s-c)\varrho$, by using \eqref{3.1.34} and \eqref{3.1.35} we have
	\begin{align}
	\a^{-1}\|\dv(\cu,\cv)\|_{H^1}\lesssim
	\|\varrho\|_{H^1}\lesssim \frac{1}{\text{Im}c}\|\Omega(s_1,s_2)\|_{L^2_w}+\f1\a\|s_1\|_{L^2}+\|s_2\|_{L^2}.\label{3.1.38}
	\end{align}
	
	Finally, for $\cu$, by using \eqref{3.1.2}, \eqref{3.1.32}, \eqref{3.1.31}, \eqref{3.1.34}, \eqref{3.1.35} and \eqref{3.1.37}, we obtain that
	\begin{align}
	\|\cu\|_{H^1}&\lesssim \|\pa_Y\Psi\|_{H^1}+\|\varrho\|_{H^1}\lesssim \frac{1}{\text{Im}c}\|\Omega(s_1,s_2)\|_{L^2_w}+\f1\a\|s_1\|_{L^2}+\|s_2\|_{L^2},\label{3.1.38-1}\\
	\|\pa_Y^2\cu\|_{L^2}&\lesssim \|\pa_Y^3\Psi\|_{L^2}+\|\varrho\|_{H^2}\lesssim \frac{n^{\f12}}{(\text{Im}c)^{\f12}}\|\Omega(s_1,s_2)\|_{L^2_w}+\f1\a\|s_1\|_{L^2}+\|s_2\|_{L^2}+\|\dv(s_1,s_2)\|_{L^2}.\label{3.1.38-2}
	\end{align}
	Putting the estimates   in \eqref{3.1.33}, \eqref{3.1.34}, \eqref{3.1.35}, \eqref{3.1.37}-\eqref{3.1.38-1} together yields the  estimate \eqref{3.1.29}. Note that \eqref{3.1.30} directly follows from \eqref{3.1.38-2}. Then the  proof of proposition is  completed.
\end{proof}

\subsection{Stokes approximation} In this section, we study the following Stokes system with advection:
\begin{equation}\label{3.2.1}\left\{
\begin{aligned}
&i\a(U_s-c)\xi+\dv(\phi,\psi)=q_0,~\\
&\sqrt{\vep}\Delta_\a \phi+\lambda i\a\sqrt{\vep}\dv(\phi,\psi)-i\a(U_s-c)\phi-(i\a\cm^{-2}+\sqrt{\vep}\pa_Y^2U_s)\xi=q_1,\\
&\sqrt{\vep}\Delta_\a\psi+\lambda\sqrt{\vep}\pa_Y\dv(\phi,\psi)-i\a(U_s-c)\psi-m^{-2}\pa_Y\xi=q_2,\\
&\pa_Y\phi|_{Y=0}=\psi|_{Y=0}=0,
\end{aligned}\right.
\end{equation}
with a given inhomogeneous source term $\vec{q}=(q_0,q_1,q_2)\in H^1(\mathbb{R}_+)\times L^2(\mathbb{R}_+)^2$.  Compared with original system \eqref{3.0.1}, in \eqref{3.2.1} we remove the stretching term $-\psi\pa_YU_s$ in the momentum equation. We impose the  Neumann boundary condition $\pa_Y\phi|_{Y=0}=0$ on the  tangential velocity for obtaining estimates on the higher order derivatives. The following proposition gives the solvability of \eqref{3.2.1}.

\begin{proposition}\label{prop3.3}
	Let $\cm\in (0,1)$. Assume that $\a\in (0,1)$ and $\f{1}{n}=\frac{\sqrt{\vep}}{\a}\ll1$. There exists $\gamma_5\in (0,1)$, such that for any $c$ lies in
	\begin{align}\label{S}
	  \Sigma_{S}\eqdef\{c \in \mathbb{C}\mid \text{Im}c>\gamma_5^{-1}n^{-1},~|c|<\gamma_5\},
    \end{align}
	 the system \eqref{3.2.1} admits a unique solution $(\xi,\phi,\psi)\in
	H^1(\mathbb{R}_+)\times H^2(\mathbb{R}_+)^2$. Moreover, $(\xi,\phi,\psi)$ satisfies the following estimates:
	\begin{align}
	\|(\cm^{-1}\xi,\phi,\psi)\|_{L^2}&\leq \frac{C}{\a \text{Im}c}\|(\cm^{-1}q_0,q_1,q_2)\|_{L^2}\label{3.2.2},\\
	\|(\pa_Y\phi,\a\phi)\|_{L^2}+\|\pa_Y\psi,\a\psi\|_{L^2}&\leq \frac{Cn^{\f12}}{\a(\text{Im}c)^{\f12}}\|(\cm^{-1}q_0,q_1,q_2)\|_{L^2},\label{3.2.3}\\
	\|\dv(\phi,\psi)\|_{H^1}+\cm^{-2}\|\pa_Y\xi\|_{L^2}&\leq \frac{C}{\text{Im}c}\|(\cm^{-1}q_0,q_1,q_2)\|_{L^2}+C\|q_0\|_{H^1}\label{3.2.4},\\
	\|\Delta_\a\phi,\Delta_\a\psi)\|_{L^2}&\leq \frac{Cn}{\a\text{Im}c}\|(\cm^{-1}q_0,q_1,q_2)\|_{L^2}+C\|q_0\|_{H^1}.\label{3.2.4-1}
	\end{align}
	Here the positive constant $C$ does not depend on either $\a$ or $\vep$.
	Furthermore, if $\vec{q}(\cdot~;c)$ is analytic in $c$  in $H^1(\mathbb{R}_+)\times L^2(\mathbb{R}_+)^2$, then $(\xi,\phi,\psi)(\cdot~;c)$ is analytic in $c$ in $H^{1}(\mathbb{R}_+)\times H^2(\mathbb{R}_+)^2$.
\end{proposition}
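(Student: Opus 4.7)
The plan is to adapt the Matsumura--Nishida--Kawashima energy method to the resolvent problem \eqref{3.2.1} for the a priori estimates, and then to construct solutions via a regularization--continuation scheme analogous to the one used in the proof of Lemma \ref{lm2.5}. I would first derive a basic $L^2$-energy identity by taking the $L^2$ inner product of the continuity equation with $\cm^{-2}\bar\xi$ and of the two momentum equations with $\bar\phi$ and $\bar\psi$, respectively, then summing. The pressure--divergence cross-terms, namely $i\a\cm^{-2}\int\phi\bar\xi - i\a\cm^{-2}\int\xi\bar\phi$ and $\cm^{-2}\int\pa_Y\psi\bar\xi - \cm^{-2}\int\pa_Y\xi\bar\psi$, are purely imaginary (after an integration by parts exploiting $\psi|_{Y=0}=0$) and therefore cancel in the real part, while the viscous boundary contributions vanish thanks to $\pa_Y\phi|_{Y=0}=\psi|_{Y=0}=0$. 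Taking imaginary parts then yields
\[\a\,\text{Im}\,c\,\bigl(\cm^{-2}\|\xi\|_{L^2}^2+\|\phi\|_{L^2}^2+\|\psi\|_{L^2}^2\bigr)+\sqrt\vep\,\|\nabla_\a(\phi,\psi)\|_{L^2}^2 \lesssim \|(\cm^{-1}q_0,q_1,q_2)\|_{L^2}\|(\cm^{-1}\xi,\phi,\psi)\|_{L^2}+\sqrt\vep\,\|\xi\|_{L^2}\|\phi\|_{L^2},\]
and the last error (arising from the coupling $\sqrt\vep\pa_Y^2U_s\,\xi$ in $\eqref{3.2.1}_2$) is absorbed because $\text{Im}\,c\gtrsim n^{-1}$ holds on $\Sigma_S$. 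This establishes \eqref{3.2.2}--\eqref{3.2.3}.

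For the higher-order bounds, I would isolate $\pa_Y\xi$ in the third momentum equation. After eliminating $\pa_Y\dv(\phi,\psi)$ via the differentiated continuity equation, one finds
\[\bigl(\cm^{-2}+\lambda i\a\sqrt\vep(U_s-c)\bigr)\pa_Y\xi=\sqrt\vep\Delta_\a\psi-i\a(U_s-c)\psi+\lambda\sqrt\vep\pa_Yq_0-\lambda i\a\sqrt\vep U_s'\xi-q_2,\]
whose left-hand coefficient is invertible of modulus $\gtrsim\cm^{-2}$ for $|c|\ll 1$ and $\sqrt\vep\ll 1$. Combined with the algebraic relation $\dv(\phi,\psi)=q_0-i\a(U_s-c)\xi$, this controls $\|\dv(\phi,\psi)\|_{H^1}$ by $\|\xi\|_{H^1}+\|q_0\|_{H^1}$, yielding \eqref{3.2.4}. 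The $H^2$-bound \eqref{3.2.4-1} then follows by rewriting the momentum equations as pointwise expressions for $\sqrt\vep\Delta_\a\phi$ and $\sqrt\vep\Delta_\a\psi$ and invoking the previously established lower-order control; the apparent circular dependence between $\|\pa_Y\xi\|$ and $\|\Delta_\a\psi\|$ closes because the coupling carries a $\sqrt\vep$-prefactor and is thus absorbable.

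For existence, uniqueness, and analyticity in $c\in\Sigma_S$, I would follow the three-layer continuation strategy used in the proof of Lemma \ref{lm2.5}: for a sufficiently large regularization parameter $l$, first solve the modified system with $-i\a c$ replaced by $-i\a c-l$ via Lax--Milgram (the bilinear form is coercive of size $l$); then decrease $l$ in small increments of size $\lesssim\text{Im}\,c$, running a contraction argument at each step using the $l$-uniform a priori bounds, down to $l=0$; finally verify by elliptic regularity that the limit lies in $H^1(\R_+)\times H^2(\R_+)^2$ and solves \eqref{3.2.1}. Uniqueness is immediate from the a priori estimates applied to the difference of two solutions, and analyticity in $c$ propagates through uniform convergence since each intermediate resolvent is analytic in $c$. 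The main obstacle is the zero-order coupling $\sqrt\vep\pa_Y^2U_s\,\xi$ in $\eqref{3.2.1}_2$, which spoils the clean conservative structure of the standard compressible energy identity and forces the working regime $\text{Im}\,c\gtrsim n^{-1}$; a secondary subtlety is the cyclic coupling between $\|\pa_Y\xi\|$ and $\|\Delta_\a\psi\|$, which is broken only by the $\sqrt\vep$-smallness of the off-diagonal term.
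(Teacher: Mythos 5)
Your basic $L^2$ energy estimate and your existence/continuation scheme are in the same spirit as the paper's proof (the paper runs the homotopy in a parameter $\eta\in[0,1]$ connecting to the incompressible Stokes system rather than in a damping parameter $l$, but both rest on the same uniform a priori bounds). One bookkeeping remark there: with your multipliers $(+\cm^{-2}\bar\xi,+\bar\phi,+\bar\psi)$ the pressure--divergence cross terms combine to $\cm^{-2}\int\big(\dv(\phi,\psi)\bar\xi+\xi\overline{\dv(\phi,\psi)}\big)\dd Y$, which is \emph{real}, not imaginary, while the damping $-i\a c$ and the viscous dissipation also live in the real part; you must flip the sign of the velocity multipliers (as the paper does, testing with $-\bar\phi,-\bar\psi$) and take real parts. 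That is a fixable slip.

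The genuine gap is in your derivation of \eqref{3.2.4}--\eqref{3.2.4-1}. Your identity
$\big(\cm^{-2}+\lambda i\a\sqrt{\vep}(U_s-c)\big)\pa_Y\xi=\sqrt{\vep}\Delta_\a\psi-i\a(U_s-c)\psi+\cdots$
is nothing but the third momentum equation rearranged, and the only available bound for $\sqrt{\vep}\|\Delta_\a\psi\|_{L^2}$ at that stage is the \emph{same} equation rearranged the other way, $\sqrt{\vep}\|\Delta_\a\psi\|_{L^2}\lesssim\cm^{-2}\|\pa_Y\xi\|_{L^2}+\cdots$. Substituting one into the other returns $\cm^{-2}\|\pa_Y\xi\|_{L^2}\lesssim\cm^{-2}\|\pa_Y\xi\|_{L^2}+\cdots$ with an $O(1)$ coefficient: the $\sqrt{\vep}$ prefactor does not break the circle, because inverting the second relation costs exactly the factor $\vep^{-1/2}$ that the prefactor provides. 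The energy estimate \eqref{3.2.3} only gives $H^1$ control of $\psi$ (with the bad factor $n^{1/2}$), so no independent bound on $\Delta_\a\psi$ is available. The paper's actual mechanism is structural: set $\omega=\pa_Y\phi-i\a\psi$, $\CD=\dv(\phi,\psi)$, so that $\Delta_\a\phi=\pa_Y\omega+i\a\CD$ and $\Delta_\a\psi=-i\a\omega+\pa_Y\CD$, and test the \emph{pair} of momentum equations against $(-i\a\bar\xi,\pa_Y\bar\xi)$. The vorticity contribution then collapses to the total derivative $-i\a\sqrt{\vep}\int\pa_Y(\omega\bar\xi)\dd Y$, which vanishes because $\omega|_{Y=0}=0$ (this is precisely why the Neumann condition $\pa_Y\phi|_{Y=0}=0$ is imposed), and the divergence contribution is rewritten through the continuity equation with a favorable sign. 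This yields $\cm^{-2}\|(\pa_Y\xi,\a\xi)\|_{L^2}\lesssim\|(q_1,q_2)\|_{L^2}+\sqrt{\vep}\|(\pa_Yq_0,\a q_0)\|_{L^2}+\a\|(\phi,\psi)\|_{L^2}$ with no second derivatives on the right, hence \eqref{3.2.4}; the bound \eqref{3.2.4-1} then follows from a separate estimate of $\|(\pa_Y\omega,\a\omega)\|_{L^2}$ (testing against $(\pa_Y\bar\omega,i\a\bar\omega)$, where the pressure terms again reduce to a vanishing boundary term) combined with the already-controlled $\|\CD\|_{H^1}$. Without this div--curl cancellation your argument for the higher-order estimates does not close.
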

\begin{remark}
We will use  the bounds given in \eqref{3.2.2}-\eqref{3.2.4-1} only when $q_0=0$ in the  proof of convergence of iteration.
\end{remark}
\begin{remark}
	In view of \eqref{3.2.3}-\eqref{3.2.4-1} with $q_0=0,$ the divergence part $\dv(\phi,\psi)$
	and the density $\xi$ of the solution have better estimates than other components because there is no
	strong sublayer related to these two fluid components. This stronger estimate is crucial in the proof of convergence of the iteration later.
\end{remark}
\begin{proof}
	We first focus on the a priori estimates \eqref{3.2.2}-\eqref{3.2.4-1}. By taking inner product of $\eqref{3.2.1}_2$ and $\eqref{3.2.1}_3$ with $-\b{\phi}$ and $-\b{\psi}$ respectively then integrating by parts, we obtain
	\begin{align}\label{3.2.5}
	&\sqrt{\vep}\left(\|(\pa_Y\phi,\a\phi)\|_{L^2}^2+\|(\pa_Y\psi,\a \psi)\|_{L^2}^2\right)+\lambda\sqrt{\vep}\|\dv(\phi,\psi)\|_{L^2}^2+i\a\int_0^\infty (U_s-c)\left(|\phi|^2+|\psi|^2\right)\dd Y\nonumber\\
	&\qquad-\cm^{-2}\int_0^\infty \xi\overline{\dv(\phi,\psi)}\dd Y=\int_0^\infty -(q_1+\sqrt{\vep}\pa_Y^2U_s\xi)\b{\phi}-q_2\b{\psi}\dd Y.
	\end{align}
	By Cauchy-Schwarz and Young's inequalities, it holds that
	\begin{align}\label{3.2.5-1}
	\left|\int_0^\infty (q_1+\sqrt{\vep}\pa_Y^2U_s\xi)\b{\phi}+q_2\b{\psi}\dd Y\right|&\lesssim \|(q_1,q_2)\|_{L^2}\|(\phi,\psi)\|_{L^2}+\sqrt{\vep}\|\xi\|_{L^2}\|\phi\|_{L^2}\nonumber\\
	&\lesssim \|(q_1,q_2)\|_{L^2}\|(\phi,\psi)\|_{L^2} +C\sqrt{\vep}(\|\xi\|_{L^2}^2+\|\phi\|_{L^2}^2).
	\end{align}
	By using the continuity equation, $\overline{\dv(\phi,\psi)}=i\a(U_s-\b{c})\b{\xi}+\b{q}_0$,  and the Cauchy-Schwarz inequality, we get
	\begin{align}\label{3.2.5-2}
	\text{Re}\left(-\cm^{-2}\int_0^\infty\xi\overline{\dv(\phi,\psi)}\dd Y\right)&=\text{Re}\left(-i\a\cm^{-2}\int_0^\infty(U_s-\b{c})|\xi|^2\dd Y\right)-\cm^{-2}\text{Re}\left(\int_0^\infty\xi\b{q}_0\dd Y\right)\nonumber\\
	&\geq\a\text{Im}c\|\cm^{-1}\xi\|_{L^2}^2-\cm^{-2}\|\xi\|_{L^2}\|q_0\|_{L^2}.
	\end{align}
	By \eqref{3.2.5-1} and \eqref{3.2.5-2}, the real part of \eqref{3.2.5} gives that
	\begin{align}
	&\sqrt{\vep}\left(\|(\pa_Y\phi,\a\phi)\|_{L^2}^2+\|(\pa_Y\psi,\a \psi)\|_{L^2}^2\right)+\lambda\sqrt{\vep}\|\dv(\phi,\psi)\|_{L^2}^2+\a\text{Im}c\|(\cm^{-1}\xi,\phi,\psi)\|_{L^2}^2
	\nonumber\\
	&\quad\qquad\leq C\sqrt{\vep}(\|\xi\|_{L^2}^2+\|\phi\|_{L^2}^2)+ C\|(\cm^{-1}\xi,\phi,\psi)\|_{L^2}\|(\cm^{-1}q_0,q_1,q_2)\|_{L^2}.\label{3.2.5-3}
	\end{align}
	By taking $\gamma_5\in (0,1)$ sufficiently small so that $\frac{C\sqrt{\vep}}{a\text{Im}c}\leq \frac{C}{n\text{Im}c}\leq C\gamma_5\leq \frac{1}{4},~\forall c\in \Sigma_S$, we can absorb the first term on the right hand side of \eqref{3.2.5-3} by the left hand side. Thus we get 
	$$\|(\cm^{-1}\xi,\phi,\psi)\|_{L^2}\leq \frac{C}{\a\text{Im}c}\|(\cm^{-1}q_0,q_1,q_2)\|_{L^2},$$
	and
	\begin{align}\label{3.2.5-4}
	\|(\pa_Y\phi,\a\phi)\|_{L^2}+\|(\pa_Y\psi,\a \psi)\|_{L^2}&\leq C\vep^{-\f14}\|(\cm^{-1}\xi,\phi,\psi)\|_{L^2}^{\f12}\|(\cm^{-1}q_0,q_1,q_2)\|_{L^2}^{\f12}.\nonumber\\
	&\leq \frac{C}{\vep^{\f14}\a^{\f12}(\text{Im}c)^{\f12}}\|(\cm^{-1}q_0,q_1,q_2)\|_{L^2}\nonumber\\
	&\leq \frac{Cn^{\f12}}{\a(\text{Im}c)^{\f12}}|(\cm^{-1}q_0,q_1,q_2)\|_{L^2}.
	\end{align}
	This completes the proof of \eqref{3.2.2} and \eqref{3.2.3}. 
	
	Next we estimate  $\|\pa_Y\xi\|_{L^2}$ and $\|\dv(\phi,\psi)\|_{H^1}$. Define $\omega\eqdef \pa_Y\phi-i\a\psi$ and denote $\mathcal{D}:=\dv(\phi,\psi)$. Then 
	\begin{align}\label{3.2.6-1}
	\Delta_\a\phi=\pa_Y\omega+i\a \CD,~\Delta_\a\psi=-i\a\omega+\pa_Y\CD,
	\end{align}
	and $\omega|_{Y=0}=0$ because of  the boundary conditions in \eqref{3.2.1}. Thus, we can rewrite $\eqref{3.2.1}_2$ and $\eqref{3.2.1}_3$ as
	\begin{align}
	i\a\cm^{-2}\xi&=\sqrt{\vep}\pa_Y\omega+\sqrt{\vep}(1+\lambda) i\a \CD-i\a(U_s-c)\phi-q_1-\sqrt{\vep}\pa_Y^2U_s\xi,\label{3.2.6-2}\\
	\cm^{-2}\pa_Y\xi&=-\sqrt{\vep}i\a \omega+\sqrt{\vep}(1+\lambda)\pa_Y\CD-i\a(U_s-c)\psi-q_2.\label{3.2.6-3}
	\end{align}
	By taking inner product of \eqref{3.2.6-2} and \eqref{3.2.6-3} with $-i\a\b{\xi}$ and $\pa_Y\b{\xi}$ respectively, we deduce that
	\begin{align}
	\cm^{-2}\|\pa_Y\xi,\a\xi\|_{L^2}^2=&\underbrace{\sqrt{\vep}\int_0^\infty -\pa_Y\omega i\a\b{\xi}-i\a\omega\pa_Y\b{\xi}\dd Y}_{J_5}+\underbrace{\sqrt{\vep}(1+\lambda)\int_0^\infty \a^2\CD\b{\xi}+\pa_Y\CD\pa_Y\b{\xi}\dd Y}_{J_6}\nonumber\\
	&+\underbrace{\int_0^\infty(q_1+\sqrt{\vep}\pa_Y^2U_s\xi)i\a\b{\xi}-q_2\pa_Y\b{\xi}\dd Y}_{J_7}+\underbrace{\int_0^\infty i\a(U_s-c)(i\a\b{\xi} \phi-\psi\pa_Y\b{\xi})\dd Y}_{J_8}.\label{3.2.6}
	\end{align}
	Integrating by parts and using boundary condition $\omega|_{Y=0}=0$ yield that
	\begin{align}
	 J_5=i\a\sqrt{\vep}\b{\xi}\omega|_{Y=0}=0.\label{3.2.6-5}
	\end{align}
	 For $J_6$, by using the continuity equation $\eqref{3.2.1}_1$, we have
	\begin{align}\label{3.2.6-4}
	\CD=-i\a(U_s-{c})\xi+q_0,~\pa_Y\CD=-i\a(U_s-c)\pa_Y\xi-i\a\pa_YU_s{\xi}+\pa_Yq_0,
	\end{align}
	which implies that
	\begin{align}
	J_6=&-\sqrt{\vep}(1+\lambda)\int_0^\infty i\a(U_s-c)(|\pa_Y\xi|^2+\a^2|\xi|^2)\dd Y-i\a\sqrt{\vep}(1+\lambda)\int_0^\infty \pa_YU_s\xi\pa_Y\b{\xi}\dd Y\nonumber\\
	&+\sqrt{\vep}(1+\lambda)\int_0^\infty \pa_Yq_0\pa_Y\b{\xi}+\a^2q_0\b{\xi}\dd Y.\nonumber
	\end{align}
	For last two terms on the right hand side, we obtain by Cauchy-Schwarz and Young's inequalities that 
	$$\left|i\a\sqrt{\vep}(1+\lambda)\int_0^\infty \pa_YU_s{\xi}\pa_Y\b{\xi}\dd Y\right|\leq C\sqrt{\vep}\|\pa_Y\xi\|_{L^2}\|\a\xi\|_{L^2}\leq C\sqrt{\vep}(\|\pa_Y\xi\|_{L^2}^2+\a^2\|\xi\|_{L^2}^2),
	$$
	and
	$$
	\begin{aligned}
	\sqrt{\vep}(1+\lambda)\left|\int_0^\infty \pa_Yq_0\pa_Y\b{\xi}+\a^2q_0\b{\xi}\dd Y\right|&\leq C\sqrt{\vep}\|(\pa_Y\xi,\a\xi)\|_{L^2}\|(\pa_Yq_0,\a q_0)\|_{L^2}\\
	&\leq \frac{\cm^{-2}}{8}\|(\pa_Y\xi,\a\xi)\|_{L^2}^2+C\cm^{2}\vep\|(\pa_Yq_0,\a q_0)\|_{L^2}^2.
	\end{aligned}
	$$
	Thus,  taking real part of $J_6$ gives
	\begin{align}
	\text{Re}J_6\leq & -\a\text{Im}c\sqrt{\vep}(1+\lambda)\|(\pa_Y\xi,\a\xi)\|_{L^2}^2+\left(C\sqrt{\vep}+\frac{\cm^{-2}}{8}\right)\|(\pa_Y\xi,\a\xi)\|_{L^2}^2+C\cm^{2}\vep\|(\pa_Yq_0,\a q_0)\|_{L^2}^2\nonumber\\
	\leq & \frac{\cm^{-2}}{4}\|(\pa_Y\xi,\a\xi)\|_{L^2}^2+C\cm^2\vep\|(\pa_Yq_0,\a q_0)\|_{L^2}^2,\label{3.2.6-6}
	\end{align}
	for $0<\vep\ll1$ being sufficiently small. Again by Young's inequality, we get 
	\begin{align}
	|J_7|+|J_8|&\leq C\left(\|(q_1,q_2)\|_{L^2}+\a\|(\phi,\psi)\|_{L^2}\right)\|(\pa_Y\xi,\a \xi)\|_{L^2}+\f{C\sqrt{\vep}}{\a}\|\a\xi\|_{L^2}^2\nonumber\\
	&\leq\frac{\cm^{-2}}{4}\|(\pa_Y\xi,\a\xi)\|_{L^2}^2+C\cm^2(\|(q_1,q_2)\|_{L^2}^2+\a^2\|(\phi,\psi)\|_{L^2}^2),\label{3.2.6-7}
	\end{align}
	where we have used $\frac{1}{n}=\f{\sqrt{\vep}}{\a}\ll1$.
	By \eqref{3.2.6-5}, \eqref{3.2.6-6} and \eqref{3.2.6-7}, the real part of \eqref{3.2.6} yields
	\begin{align}
	\cm^{-2}\|(\pa_Y\xi,\a\xi)\|_{L^2}&\leq C \|(q_1,q_2)\|_{L^2}+C\sqrt{\vep}\|(\pa_Yq_0,\a q_0)\|_{L^2}+C\a\|(\phi,\psi)\|_{L^2}.\label{3.2.7-1}
	\end{align}
	Moreover, by \eqref{3.2.6-4} and \eqref{3.2.7-1}, we obtain
	\begin{align}
	\|\dv(\phi,\psi)\|_{H^1}&\leq C\|(\pa_Y\xi,\a\xi)\|_{L^2}+C\|q_0\|_{H^1}\nonumber\\
	&\leq C\|(q_1,q_2)\|_{L^2}+C\|q_0\|_{H^1}+C\a\|(\phi,\psi)\|_{L^2}.\label{3.2.7}
	\end{align}
	Putting the bound \eqref{3.2.2} on $\|(\phi,\psi)\|_{L^2}$ into \eqref{3.2.7-1} and \eqref{3.2.7} yields the  estimate 
$$\begin{aligned}
\|\dv(\phi,\psi)\|_{H^1}+\cm^{-2}\|(\pa_Y\xi,\a\xi)\|_{L^2}&\leq C(1+\frac{1}{\text{Im}c})\|(\cm^{-1}q_0,q_1,q_2)\|_{L^2}+C(1+\sqrt{\vep})\|q_0\|_{H^1}\\
&\leq \frac{C}{\text{Im}c}\|(\cm^{-1}q_0,q_1,q_2)\|_{L^2}+\|q_0\|_{H^1}.
\end{aligned}
$$	
Hence,  \eqref{3.2.4} holds.
	
	Finally, we derive the estimate on $\|(\pa_Y\omega,\a\omega)\|_{L^2}$. By taking inner products of
	  $\eqref{3.2.6-2}$ and $\eqref{3.2.6-3}$ with $\pa_Y\b{\omega}$ and $i\a\b{\omega}$ respectively then using the fact that
	$$\int_0^\infty(i\a \CD\pa_Y\b{\omega}+i\a\b{\omega}\pa_Y \CD )\dd Y=\int_0^\infty (i\a\xi \pa_Y\b{\omega}+\pa_Y\xi i\a\b{\omega})\dd Y=0,
	$$ we obtain 
	$$
	\begin{aligned}
	\sqrt{\vep}\|(\pa_Y\omega,\a\omega)\|_{L^2}^2&=\int_0^\infty (q_1+\sqrt{\vep}\pa_Y^2U_s\xi)\pa_Y\b{\omega}+q_2i\a\b{\omega}\dd Y+\int_0^\infty i\a(U_s-c)\left(\phi\pa_Y\omega+\psi i\a\b{\omega} \right)\dd Y\\
	&\leq C\left( \|(q_1,q_2)\|_{L^2}+\sqrt{\vep}\|\xi\|_{L^2}+\a\|(\phi,\psi)\|_{L^2}\right)\|(\pa_Y\omega,\a\omega)\|_{L^2},
	\end{aligned}
	$$
	which implies
	\begin{align}\label{3.2.7-2}
	\|(\pa_Y\omega,\a\omega)\|_{L^2}&\leq \frac{C}{\sqrt{\vep}}\|(q_1,q_2)\|_{L^2}+Cn\|(\xi,\phi,\psi)\|_{L^2}\nonumber\\
	&\leq \frac{Cn}{\a\text{Im}c}(1+\text{Im}c)\|(\cm^{-1}q_0,q_1,q_2)\|_{L^2}\nonumber\\
	&\leq \frac{Cn}{\a\text{Im}c}\|(\cm^{-1}q_0,q_1,q_2)\|_{L^2}.
	\end{align}
 By combining this with the bound \eqref{3.2.4} on $\|\dv(\phi,\psi)\|_{H^1}$ and recalling \eqref{3.2.6-1}, we have
	$$
	\begin{aligned}
	\|(\Delta_\a\phi,\Delta_\a\psi)\|&\leq C\|(\pa_Y\omega,\a\omega)\|_{L^2}+C\|\dv(\phi,\psi)\|_{H^1}\nonumber\\
	&\leq \frac{Cn}{\a\text{Im}c}\left(1+\sqrt{\vep}\right)\|(\cm^{-1}q_0,q_1,q_2)\|_{L^2}+C\|q_0\|_{H^1}\nonumber\\
	&\leq \frac{Cn}{\a\text{Im}c}\|(\cm^{-1}q_0,q_1,q_2)\|_{L^2}+C\|q_0\|_{H^1},
	\end{aligned}
	$$
which is \eqref{3.2.4-1}. The uniqueness of solution follows from the a priori bounds \eqref{3.2.2}-\eqref{3.2.4-1}.
	
As for the construction of solution, we introduce a parameter $\eta\in [0,1]$ and study a auxiliary problem $L_{S,\eta}(\xi^\eta,\phi^\eta,\psi^{\eta})=(q_0,q_1,q_2)$ as follows:
	\begin{equation}
	\left\{
	\begin{aligned}
	&i\a\eta(U_s-c)\xi^\eta+\dv(\phi^\eta,\psi^\eta)=q_0,\\
	&\sqrt{\vep}\Delta_\a \phi^\eta+\eta\lambda i\a\sqrt{\vep}\dv(\phi^\eta,\psi^\eta)-i\a\eta(U_s-c)\phi^\eta-(i\a\cm^{-2}+\eta\sqrt{\vep}\pa_Y^2U_s)\xi^\eta=q_1,\\
	&\sqrt{\vep}\Delta_\a \psi^\eta+\eta\lambda \sqrt{\vep}\pa_Y\dv(\phi^\eta,\psi^\eta)-i\a\eta(U_s-c)\psi^\eta-\cm^{-2}\pa_Y\xi^\eta=q_2,\label{3.2.8}\\
	&\pa_Y\phi^\eta|_{Y=0}=\psi^\eta|_{Y=0}=0.
	\end{aligned}
	\right.
	\end{equation}
	When $\eta=0,$ \eqref{3.2.8} reduces to the classical Stokes system for incompressible flow:
	\begin{align}
	\dv(\phi^0,\psi^0)=q_0,~\sqrt{\vep}\Delta_\a \phi^0-i\a\cm^{-2}\xi^0=q_1,~
	\sqrt{\vep}\Delta_\a \psi^0-\cm^{-2}\pa_Y\xi^0=q_2,~\pa_Y\phi^0|_{Y=0}=\psi^0|_{Y=0}=0.\nonumber
	\end{align}
	It is standard to show the existence and uniqueness of  solution $(\xi^0,\phi^0,\psi^0)\in H^1(\mathbb{R}_+)\times H^2(\mathbb{R}_+)^2$  for any $(q_0,q_1,q_2)\in H^1(\mathbb{R}_+)\times L^2(\mathbb{R}_+)^2.$ Moreover, by repeating previous energy estimates to \eqref{3.2.8} and slightly modifying the proof of bounds \eqref{3.2.5-4}, \eqref{3.2.7-1}, \eqref{3.2.7} and \eqref{3.2.7-2},
	one can deduce the following estimates on $(\xi^\eta,\phi^\eta,\psi^\eta)$
	\begin{align}
	\|(\pa_Y\phi^\eta,\a\phi^\eta)\|_{L^2}+\|(\pa_Y\psi^\eta,\a\psi^\eta)\|_{L^2}&\leq C\vep^{-\f14}\|(\cm^{-1}\xi^\eta,\phi^\eta,\psi^\eta)\|_{L^2}^{\f12}
	\|(\cm^{-1}q_0,q_1,q_2)\|_{L^2}^{\f12},\nonumber\\
	\cm^{-2}\|(\pa_Y\xi^\eta,\a\xi^\eta)\|_{L^2}+\|\dv(\phi^\eta,\psi^\eta)\|_{H^1}&\leq C\|(q_1,q_2)\|_{L^2}+C\|q_0\|_{H^1}+C\a\|(\phi^\eta,\psi^\eta)\|_{L^2},\nonumber\\
	\|(\pa_Y\omega^\eta,\a\omega^\eta)\|_{L^2}&\leq \frac{C}{\sqrt{\vep}}\|(q_1,q_2)\|_{L^2}+Cn\|(\cm^{-1}\xi^\eta,\phi^\eta,\psi^\eta)\|_{L^2},\nonumber
	\end{align}
	where $\omega^\eta=\pa_Y\phi^\eta-i\a\psi^\eta$ and the constant $C>0$ does not depend on $\eta$. Putting above inequalities together yields the following uniform-in-$\eta$ estimate
	$$\|\xi^\eta\|_{H^1}+\|(\phi^\eta,\psi^\eta)\|_{H^2}\leq C(\vep,\a)\|(\cm^{-1}q_0,q_1,q_2)\|_{L^2}+C(\vep,\a)\|q_0\|_{H^1},
	$$
	where the constant $C(\vep,\a)$ may depend on $\vep$ and $\a$, but not on $\eta\in [0,1]$. Thus the existence of solution to \eqref{3.2.1} as well as its analytic dependence on $c$ can be established by the same bootstrap argument as in Lemma \ref{lm2.5}.  By uniqueness, the solution obtained  satisfies the bounds \eqref{3.2.2}-\eqref{3.2.4-1}. And this completes the proof of the proposition. 
\end{proof}

\subsection{Quasi-compressible-Stokes iteration} 
In this subsection, we will construct a solution $\vec{\Xi}=(\varrho,\cu,\cv)$ to the linearized system \eqref{3.0.1} via an  iteration scheme based on the solutions to quasi-compressible and Stokes approximations given in Propositions  \ref{prop3.2} and \ref{prop3.3}.

We first consider the case when source term $(f_u,f_v)\in L^2(\mathbb{R}_+)^2$. At zeroth step, we define $\vec{\Xi}_0=(\xi_0,\phi_0,\psi_0)$ as the solution to Stokes approximate system
\begin{align}\label{3.3.1}
L_S(\xi_0,\phi_0,\psi_0)=(0,f_u,f_v),
\end{align}
which  yields an error $$\vec{\CE}_0\eqdef\CL(\xi_0,\phi_0,\psi_0)-L_Q(\xi_0,\phi_0,\psi_0)=\left(0,-\psi_0\pa_YU_s,0\right).$$
Because of the regularizing effect of solution operator to Stokes approximation $L_S$, this error has higher regularity and  fast decay so that $\vec{\CE}_0\in H^2_w(\mathbb{R}_+)$. We can then eliminate it by considering $(\varrho_1,\cu_1,\cv_1)$ as the solution to quasi-compressible approximation
\begin{align}\label{3.3.1-1}
L_Q(\varrho_1,\cu_1,\cv_1)=-\vec{\CE}_0.
\end{align}
Then we have
\begin{align}
&\CL(\varrho_1,\cu_1,\cv_1)-L_Q(\varrho_1,\cu_1,\cv_1)=E_Q(\varrho_1,\cu_1,\cv_1),\label{3.3.1-2}
\end{align}
where the error operator $E_Q$ is defined in \eqref{t4}.
According to Proposition \ref{prop3.2}, the solution $(\varrho_1,\cu_1,\cv_1)$ is in $ H^2(\mathbb{R}_+)^3$.
Thus the error term $E_Q(\varrho_1,\cu_1,\cv_1)$ is in $L^2(\mathbb{R}_+)$.
This allows us to correct this error by using the solution $(\xi_1,\phi_1,\psi_1)$ to the Stokes approximate system again:
\begin{align}
L_S(\xi_1,\phi_1,\psi_1)=-E_Q(\varrho_1,\cu_1,\cv_1)\label{3.3.1-3}
\end{align}
Now we set $\vec{\Xi}_1=(\varrho_1,\cu_1,\cv_1)+(\xi_1,\phi_1,\psi_1)$ as the approximate solution as the first step, which together with $\vec{\Xi}_0$ generates an error term
$$\vec{\CE}_1\eqdef \CL(\vec{\Xi}_0+\vec{\Xi}_1)-(0,f_u,f_v)
=\left(0,-\psi_1\pa_YU_s,0\right).
$$
Now we can iterate the above process.  Given the approximate solution $\vec{\Xi}_j$ as well as the error $$\vec{\CE}_{j}=\left(0,-\psi_j\pa_YU_s,0\right)$$
in the $j$-th ($j\geq 1$) step,
we define the $j+1$-order approximate solution $\vec{\Xi}_{j+1}$ as
$$
\vec{\Xi}_{j+1}=(\varrho_{j+1},\cu_{j+1},\cv_{j+1})+(\xi_{j+1},\phi_{j+1},\psi_{j+1}),
$$
where $(\varrho_{j+1},\cu_{j+1},\cv_{j+1})$ is the solution to quasi-compressible system
\begin{align}
L_Q(\varrho_{j+1},\cu_{j+1},\cv_{j+1})=-\vec{\CE}_{j},\label{3.3.1-4}
\end{align}
and
$(\xi_{j+1},\phi_{j+1},\psi_{j+1})$ solves the Stokes approximate system
\begin{align}
&L_S(\xi_{j+1},\phi_{j+1},\psi_{j+1})=-E_Q(\varrho_{j+1},\cu_{j+1},\cv_{j+1}).\label{3.3.1-5}
\end{align}
Observe that for each positive integer $N\geq 0$, it holds 
$$\CL(\sum_{j=0}^N\vec{\Xi}_j)=(0,f_u,f_v)+\vec{\CE}_N,
$$
where the error term in $N$-th step is $\vec{\CE}_N=\left(0,-\psi_N\pa_YU_s,0\right)$. Therefore, at 
this point, formally 
the series 
$\vec{\Xi}=\sum_{j=0}^\infty \vec{\Xi}_j$ gives a solution to the original system \eqref{3.0.1}.\\

If in addition  $f_u,f_v\in H^1(\mathbb{R}_+)$ and $\|\Omega(f_u,f_v)\|_{L^2_w}<\infty$ where operator $\Omega$ is defined in \eqref{add3.1.5}, then we introduce $(\varrho_0,\cu_0,\cv_0)\in H^2(\mathbb{R}_+)^3$ as the solution to the quasi-compressible system
\begin{align}\label{3.3N.14}
L_Q(\varrho_0,\cu_0,\cv_0)=(0,f_u,f_v),
\end{align}
which yields an error term
\begin{align}\label{3.3N.15}
\vec{\CE}_{-1}&\eqdef\CL(\varrho_0,\cu_0,\cv_0)-L_Q(\varrho_0,\cu_0,\cv_0)\nonumber\\
&=\bigg(0,-\sqrt{\vep}\Delta_\a\big[(U_s-c)\varrho_0\big]+\lambda i\a\sqrt{\vep}\dv(\cu_0,\cv_0)-\sqrt{\vep}\pa_Y^2U_s\varrho_{0},\lambda\sqrt{\vep}\pa_Y\dv(\cu_0,\cv_0)\bigg)\nonumber\\
&=E_Q(\varrho_0,\cu_0,\cv_0).
\end{align}
The new error term $\CE_{-1}$ is in $L^2(\mathbb{R}_+)^3$. So we can take $\vec{\Upsilon}=(\tilde{\rho},\tilde{u},\tilde{v})$ as the solution to  original linear system \eqref{3.0.1} with inhomogeneous source term $-\CE_{-1}$, i.e. $\CL(\vec{\Upsilon})=-\vec{\CE}_{-1}$. Then it is clear that $\vec{\Xi}\eqdef(\varrho_0,\cu_0,\cv_0)+\vec{\Upsilon}$ defines a solution to \eqref{3.0.1}. 

The above iteration can be rigorously justified by proving the convergence of  iteration that is given in  Proposition \ref{prop3.0}.\\

{\bf Proof of Proposition \ref{prop3.0}:} Recall  the bounds on the parameters $|c|$ and $n$
in \eqref{para}. We can take $0<\vep\ll1$ suitably small such that the following bounds hold for any $c\in \overline{D_0}:$
\begin{align}\label{bc}
|c|<\min\{\gamma_4,\gamma_5\},~n\text{Im}c\gtrsim \vep^{-\f14}\geq\max\{ 2\gamma_4^{-1},2\gamma_5^{-1}\},~|c|^{-2}\text{Im}c\gtrsim \vep^{-\f18}\gtrsim 2\gamma_4^{-1},
\end{align}
where the constants $\g_4$ and $\g_5$ are given in Proposition \ref{prop3.2} and \ref{prop3.3} respectively. Thus,  we have $\overline{D_0}\subsetneq \Sigma_{Q}\cap\Sigma_{S}$, where $\Sigma_Q$ and $\Sigma_S$ are resolvent sets of $L_Q$ and $L_S$, which are defined in \eqref{Q} and \eqref{S} respectively.
From \eqref{3.3.1-4}, we know that $(\varrho_{j+1},\cu_{j+1},\cv_{j+1})$ is the solution to quasi-compressible approximation \eqref{3.1.1} with inhomogeneous source term $s_{1,j+1}=\psi_j\pa_YU_s$, $s_{2,j+1}=0.$
Then we have
$$\Omega(s_{1,j+1},s_{2,j+1})=\f{i}{\a}\pa_Y\left( A^{-1}\psi_j\pa_YU_s\right)=-\f{i}{\a}\cw^{-1}\psi_j+\f{i}{\a}A^{-1}\pa_YU_s\pa_Y\psi_j.
$$
To eliminate the singular factor $\a^{-1}$, we 
use the fact that $\pa_Y\psi_j=\dv(\phi_j,\psi_j)-i\a\phi_j$, 
the two bounds given in \eqref{A4}, \eqref{3.1.8} and Hardy inequality to obtain
\begin{align}
\|\Omega(s_{1,j+1},s_{2,j+1})\|_{L^2_w}&\lesssim \frac{1}{\a}\||\pa_Y^2U_s|^{-\f12}|\cw|^{-1} Y\|_{L^\infty}\|Y^{-1}\psi_j\|_{L^2}+\f1\a\||\pa_Y^2U_s|^{-\f12}\pa_YU_s\|_{L^\infty}\|\pa_Y\psi_j\|_{L^2}\nonumber\\
&\lesssim \f{1}{\a}\|\pa_Y\psi_j\|_{L^2}\lesssim \f1\a\|\dv(\phi_j,\psi_j)\|_{L^2}+\|\phi_j\|_{L^2}.\label{3.3N.4}
\end{align}
Similarly, we get
\begin{align}
\|s_{1,j+1}\|_{L^2}&\lesssim \|Y\pa_YU_s\|_{L^\infty}\|Y^{-1}\psi_{j}\|_{L^2}\lesssim\|\pa_Y\psi_j\|_{L^2}\lesssim \|\dv(\phi_j,\psi_j)\|_{L^2}+\a\|\phi_j\|_{L^2}.\label{3.3N.2}\\
\|\dv(s_{1,j+1},s_{2,j+1})\|_{L^2}&\lesssim \a\|s_{1,j+1}\|_{L^2}\lesssim \a\|\dv(\phi_j,\psi_j)\|_{L^2}+\a^2\|\phi_j\|_{L^2}.\label{3.3N.3}
\end{align}
Thus, by applying bounds given in  \eqref{3.1.29} and \eqref{3.1.30} in Proposition \ref{prop3.2} to  $(\varrho_{j+1},\cu_{j+1},\cv_{j+1})$ and using \eqref{3.3N.4}-\eqref{3.3N.3},
we obtain 
\begin{align}
&\|\cu_{j+1}\|_{H^1}+\|(\cm^{-2}\varrho_{j+1},\cv_{j+1})\|_{H^2}+\a^{-1}\|\dv(\cu_{j+1},\cv_{j+1})\|_{H^1}\nonumber\\
&\qquad\lesssim \frac{1}{\text{Im}c}\|\Omega(s_{1,j+1},s_{2,j+1})\|_{L^2_w}+\a^{-1}(\|s_{1,j+1}\|_{L^2}+\a\|s_{2,j+1}\|_{L^2})+\|\dv(s_{1,j+1},s_{2,j+1})\|_{L^2}
\nonumber\\
&\qquad\lesssim \left(1+\a^2+\frac{1}{\text{Im}c}\right)\left(\a^{-1}\|\dv(\phi_j,\psi_j)\|_{L^2}+\|\phi_j\|_{L^2}\right)\nonumber\\
&\qquad\lesssim\frac{1}{\text{Im}c}\left(\a^{-1}\|\dv(\phi_j,\psi_j)\|_{L^2}+\|\phi_j\|_{L^2}\right),\label{3.3N.5}
\end{align}
and
\begin{align}
\|\pa_Y^2u_{j+1}\|_{L^2}
&\lesssim \frac{n^{\f12}}{(\text{Im}c)^{\f12}}\|\Omega(s_{1,j+1},s_{2,j+1})\|_{L^2_w}+\a^{-1}(\|s_{1,j+1}\|_{L^2}+\a\|s_{2,j+1}\|_{L^2})+\|\dv(s_{1,j+1},s_{2,j+1})\|_{L^2}\nonumber\\ 
&\lesssim\left(1+\a^2+\frac{n^{\f12}}{(\text{Im}c)^{\f12}}\right)\left(\a^{-1}\|\dv(\phi_j,\psi_j)\|_{L^2}+\|\phi_j\|_{L^2}\right)\nonumber\\
&\lesssim \frac{n^{\f12}}{(\text{Im}c)^{\f12}}\left(\a^{-1}\|\dv(\phi_j,\psi_j)\|_{L^2}+\|\phi_j\|_{L^2}\right) .\label{3.3N.6}
\end{align}
Here we have also used $\a\in (0,1)$.

Next according to \eqref{3.3.1-5}, we solve $(\xi_{j+1},\phi_{j+1},\psi_{j+1})$ from the Stokes approximation \eqref{3.2.1} with inhomogeneous source term $q_{0,j+1}=0$,
$$q_{1,j+1}=\sqrt{\vep}\Delta_\a\big[(U_s-c)\varrho_{j+1}\big]-\lambda\sqrt{\vep}i\a\dv(\cu_{j+1},\cv_{j+1})+\sqrt{\vep}\pa_Y^2U_s\varrho_{j+1},~ q_{2,j+1}=-\lambda\sqrt{\vep}\pa_Y\dv(\cu_{j+1},\cv_{j+1}).$$
By \eqref{3.3N.5}, we have
\begin{align}\label{3.3N.7}
\|{q}_{1,j+1},q_{2,j+1}\|_{L^2}&\lesssim \sqrt{\vep}\left(\|\varrho_{j+1}\|_{H^2}+\|\dv(\cu_{j+1},\cv_{j+1})\|_{H^1} \right)\nonumber\\
&\lesssim \sqrt{\vep}\left(\|\cm^{-2}\varrho_{j+1}\|_{H^2}+\a^{-1}\|\dv(\cu_{j+1},\cv_{j+1})\|_{H^1} \right)\nonumber\\
&\lesssim \f{\sqrt{\vep}}{\text{Im}c}\left(\a^{-1}\|\dv(\phi_j,\psi_j)\|_{L^2}+\|\phi_j\|_{L^2}\right).
\end{align}
Then by applying \eqref{3.2.2}-\eqref{3.2.4-1} in Proposition \ref{prop3.3} to $(\xi_{j+1},\phi_{j+1},\psi_{j+1})$, using \eqref{3.3N.7} and $\a=n\sqrt{\vep}$, we can deduce that
\begin{align}
\|(\cm^{-1}\xi_{j+1},\phi_{j+1},\psi_{j+1})\|_{L^2}
&\lesssim \frac{1}{n(\text{Im}c)^2}\left(\a^{-1}\|\dv(\phi_j,\psi_j)\|_{L^2}+\|\phi_j\|_{L^2}\right),\label{3.3N.8}\\
\a^{-1}\|\dv(\phi_{j+1},\psi_{j+1})\|_{H^1}+\a^{-1}\|\cm^{-2}\pa_Y\xi_{j+1}\|_{L^2}&\lesssim \frac{1}{n(\text{Im}c)^2}\left(\a^{-1}\|\dv(\phi_j,\psi_j)\|_{L^2}+\|\phi_j\|_{L^2}\right), \label{3.3N.8-1}\\
\|(\pa_Y\phi_{j+1},\pa_Y\psi_{j+1})\|_{L^2}&\lesssim \frac{1}{n^{\f12}(\text{Im}c)^{\f32}}\left(\a^{-1}\|\dv(\phi_j,\psi_j)\|_{L^2}+\|\phi_j\|_{L^2}\right),\label{3.3N.9}\\
\|(\pa_Y^2\phi_{j+1},\pa_Y^2\psi_{j+1})\|_{L^2}&\lesssim \frac{1}{(\text{Im}c)^2}\left(\a^{-1}\|\dv(\phi_j,\psi_j)\|_{L^2}+\|\phi_j\|_{L^2}\right).\label{3.3N.10}
\end{align}

Set
$$E_{j}\eqdef \|(\cm^{-1}\xi_{j},\phi_{j},\psi_{j})\|_{L^2}+\a^{-1}\|\dv(\phi_{j},\psi_{j})\|_{H^1}+\a^{-1}\|\cm^{-2}\pa_Y\xi_{j}\|_{L^2},~ j=0,1,2,\cdots. $$
By the estimates  \eqref{3.3N.8} and \eqref{3.3N.8-1}, we have
\begin{align}\label{3.3N.10-1}
E_{j+1}\leq \frac{C}{n(\text{Im}c)^2}\left( \a^{-1}\|\dv(\phi_j,\psi_j)\|_{L^2}+\|\phi_j\|_{L^2}\right)\leq \frac{C}{n(\text{Im}c)^2}E_j,~j=0,1,2,\cdots.
\end{align}
Recall \eqref{para} for the bounds on $c$ and $n$ when $\a=K\vep^{\f18}$ and $c\in D_0$.
By taking $\vep_3\in (0,1)$ suitably small so that $\frac{C}{n(\text{Im}c)^2}\leq C\vep^{\f18}<\f12$ for any $\vep\in (0,\vep_3)$, we can deduce from \eqref{3.3N.10-1} that
\begin{align}\label{3.3N.11}
\sum_{j=0}^\infty E_j\leq\sum_{j=0}^\infty\left(\f12\right)^jE_0\leq CE_0. 
\end{align}
Furthermore, by using the  bounds obtained in \eqref{3.3N.5}, \eqref{3.3N.6}, \eqref{3.3N.9}-\eqref{3.3N.11} and $\frac{1}{n^{\f12}(\text{Im}c)^{\f32}}\lesssim 1,\forall c\in D_0$,
we get
\begin{align}
&\sum_{j=1}^\infty\|(\pa_Y\phi_{j},\pa_Y\psi_{j})\|_{L^2}\lesssim \frac{1}{n^{\f12}(\text{Im}c)^{\f32}}\left(\sum_{j=0}^\infty E_j\right)\lesssim E_0,\label{3.3N.12}\\
&\sum_{j=1}^\infty\|(\pa^2_Y\phi_{j}, \pa_Y^2\psi_{j})\|_{L^2}\lesssim \frac{1}{(\text{Im}c)^2}\left(\sum_{j=0}^\infty E_j\right)\lesssim \frac{1}{(\text{Im}c)^2}E_0,\label{3.3N.12-1}\\
&\sum_{j=1}^\infty\|\cu_j\|_{H^1}+\sum_{j=1}^\infty\|(\cm^{-2}\varrho_j,\cv_j)\|_{H^2}+\a^{-1}\|\dv(\cu_j,\cv_j)\|_{H^1}\lesssim \frac{1}{\text{Im}c}\left(\sum_{j=0}^\infty E_j\right)\lesssim \frac{1}{\text{Im}c}E_0,\label{3.3N.12-2}\\
&\sum_{j=1}^\infty\|\pa_Y^2\cu_j\|_{L^2}\lesssim \frac{n^{\f12}}{(\text{Im}c)^{\f12}}\left(\sum_{j=0}^\infty E_j\right)\lesssim \frac{n^{\f12}}{(\text{Im}c)^{\f12}}E_0.\label{3.3N.12-3}
\end{align}
In view of \eqref{3.3N.11}-\eqref{3.3N.12-3}, we have justified the convergence of $\vec{\Xi}=(\rho,u,v)=\sum_{j=0}^\infty \vec{\Xi}_j$ in $H^1(\mathbb{R}_+)\times H^2(\mathbb{R}_+)^2$. This gives the existence of solution. Moreover, Recall \eqref{3.3.1}. By applying \eqref{3.2.2}-\eqref{3.2.4-1} to $\vec{\Xi}_0$ with $q_0=0$, $q_1=f_u$ and $q_2=f_v$, we derive the following estimates
\begin{align}
E_0&\lesssim \frac{1}{\a\text{Im}c}\|(f_u,f_v)\|_{L^2},\label{3.3N.13-1}\\
\|(\pa_Y\phi_0,\pa_Y\psi_0)\|_{L^2}&\lesssim \frac{n^{\f12}}{\a(\text{Im}c)^{\f12}}\|(f_u,f_v)\|_{L^2},\label{3.3N.13-2}\\
\|(\pa_Y^2\phi_0,\pa_Y^2\psi_0)\|_{L^2}&\lesssim \frac{n}{\a\text{Im}c}\|(f_u,f_v)\|_{L^2}.\label{3.3N.13}
\end{align}
By summarizing the  estimates \eqref{3.3N.11}-\eqref{3.3N.13}, we have
\begin{align}
\|(\cm^{-1}\rho,u,v)\|_{L^2}&\lesssim \sum_{j=0}^\infty\|(\cm^{-1}\xi_j,\phi_j,\psi_j)\|_{L^2} +\sum_{j=1}^\infty \|(\cm^{-1}\varrho_j,\cu_j,\cv_j)\|_{L^2}\nonumber\\
&\lesssim \left(1+\frac{1}{\text{Im}c}\right)E_0\lesssim \frac{1}{\a(\text{Im}c)^2}\|(f_u,f_v)\|_{L^2},\label{3.3N.14-1}\\
\|\cm^{-2}\pa_Y\rho\|_{L^2}+\|\dv(u,v)\|_{H^1}&\lesssim \a\sum_{j=0}^\infty E_j+\sum_{j=1}^\infty \|\cm^{-2}\pa_Y\varrho_j\|_{L^2}+\sum_{j=1}^\infty\|\dv(\cu_j,\cv_j)\|_{H^1}\nonumber\\
&\lesssim \left(\a+\frac{1}{\text{Im}c}+\frac{\a}{\text{Im}c}\right)E_0\lesssim \frac{1}{\a(\text{Im}c)^2}\|(f_u,f_v)\|_{L^2}, \label{3.3N.14-2}
\end{align}
\begin{align}
\|(\pa_Yu,\pa_Yv)\|_{L^2}&\lesssim \|\pa_Y\phi_0,\pa_Y\psi_0\|_{L^2}+ \sum_{j=1}^\infty \|\pa_Y\phi_j,\pa_Y\psi_j\|_{L^2}+\sum_{j=1}^\infty\|(\pa_Y\cu_j,\pa_Y\cv_j)\|_{L^2}\nonumber\\
&\lesssim \frac{n^{\f12}}{\a(\text{Im}c)^{\f12}}\|(f_u,f_v)\|_{L^2}+\left(1+\frac{1}{\text{Im}c}\right)E_0\nonumber\\
&\lesssim  \frac{n^{\f12}}{\a(\text{Im}c)^{\f12}}\left(1+\frac{1}{n^{\f12}(\text{Im}c)^{\f32}} \right)\|(f_u,f_v)\|_{L^2}
\lesssim \frac{n^{\f12}}{\a(\text{Im}c)^{\f12}}\|(f_u,f_v)\|_{L^2},\label{3.3N.14-3}
\end{align}
and
\begin{align}
\|(\pa_Y^2u,\pa_Y^2v)\|_{L^2}&\lesssim \|(\pa_Y^2\phi_0,\pa_Y^2\psi_0)\|_{L^2}+\sum_{j=1}^\infty\|(\pa_Y^2\phi_j,\pa_Y^2\psi_j)\|_{L^2}+\sum_{j=1}^\infty\|(\pa_Y^2\cu_j,\pa_Y^2\cv_j)\|_{L^2} \nonumber\\
&\lesssim \frac{n}{\a\text{Im}c}\|(f_u,f_v)\|_{L^2}+\left(\frac{1}{(\text{Im}c)^2}+\frac{n^{\f12}}{(\text{Im}c)^{\f12}}\right)E_0 \nonumber\\
&\lesssim 
\frac{n}{\a\text{Im}c}\left(1+\frac{1}{n(\text{Im}c)^2}+\frac{1}{n^{\f12}(\text{Im}c)^{\f12}}\right)\|(f_u,f_v)\|_{L^2}\lesssim \frac{n}{\a\text{Im}c}\|(f_u,f_v)\|_{L^2}.\label{3.3N.14-4}
\end{align}
Putting \eqref{3.3N.14-1}-\eqref{3.3N.14-4} together yields the  estimates \eqref{3.0.2}-\eqref{3.0.4}. The analytic dependence on $c$ of the solution $(\rho,u,v)$ follows from the uniformly convergence. Therefore, the  proof of the first part of Proposition \ref{prop3.0} is completed.

Now we assume that $f_u,f_v\in H^1(\mathbb{R}_+)$ and $\|\Omega(f_u,f_v)\|_{L^2_w}<\infty$. As discussed in the formal presentation of the iteration scheme,   we can decompose the solution $(\rho,u,v)$ into $(\rho,u,v)=(\varrho_0,\cu_0,\cv_0)+\vec{\Upsilon}=(\varrho_0,\cu_0,\cv_0)+(\tilde{\rho},\tilde{u},\tilde{v})$, where $(\varrho_0,\cu_0,\cv_0)$ is  the solution to \eqref{3.3N.14} that generates  an error $\vec{\CE}_{-1}$ defined in \eqref{3.3N.15},
and $\vec{\Upsilon}$ solves $\CL(\vec{\Upsilon})=-\vec{\CE}_{-1}$. By \eqref{3.1.29} and \eqref{3.1.30} in Proposition \ref{prop3.2}, we have
\begin{align}\label{3.3N.17}
&\|\cu_0\|_{H^1}+\|(\cm^{-2}\varrho_0,\cv_0)\|_{H^2}+\a^{-1}\|\dv(\cu_0,\cv_0)\|_{H^1}\nonumber\\
&\qquad\quad\lesssim \frac{1}{\text{Im}c}\|\Omega(f_u,f_v)\|_{L^2_w}+\f1{\a}\|f_u\|+\|f_v\|_{L^2}+\|\dv(f_u,f_v)\|_{L^2},
\end{align}
and
\begin{align}\label{3.3N.18} \|\pa_Y^2\cu_0\|_{L^2}&\lesssim \frac{n^{\f12}}{(\text{Im}c)^{\f12}}\|\Omega(f_u,f_v)\|_{L^2_w}+\f1{\a}\|f_u\|+\|f_v\|_{L^2}+\|\dv(f_u,f_v)\|_{L^2}.
\end{align}
Then we can estimate the $L^2$-bound of the error $\vec{\CE}_{-1}$  by 
\begin{align}\label{3.3N.18-1}
\|\vec{\CE}_{-1}\|_{L^2}\lesssim \sqrt{\vep}\left(\|\varrho_0\|_{H^2}+\|\dv(\cu_0,\cv_0)\|_{H^1}\right).
\end{align}
By \eqref{3.3N.18-1}, applying \eqref{3.0.2}-\eqref{3.0.4} to $\vec{\Upsilon}=(\tilde{\rho},\tilde{u},\tilde{v})$ leads to
\begin{align}
\|(\cm^{-1}\tilde{\rho},\tilde{u},\tilde{v})\|_{L^2}&\lesssim \frac{1}{\a(\text{Im}c)^2}\|\vec{\CE}_{-1}\|_{L^2}\lesssim \frac{1}{n(\text{Im}c)^2}\left(\|\varrho_0\|_{H^2}+\|\dv(\cu_0,\cv_0)\|_{H^1}\right),\label{3.3N.19-2}\\
\|\cm^{-2}\pa_Y\tilde{\rho}\|_{L^2}+\|\dv(\tilde{u},\tilde{v})\|_{L^2}&\lesssim \frac{1}{\a(\text{Im}c)^{2}}\|\vec{\CE}_{-1}\|_{L^2}\lesssim\f{1}{n(\text{Im}c)^{2}}\left(\|\varrho_0\|_{H^2}+\|\dv(\cu_0,\cv_0)\|_{H^1}\right), \label{3.3N.19-3}\\
\|(\pa_Y\tilde{u},\pa_Y\tilde{v})\|_{L^2}&\lesssim \frac{n^{\f12}}{\a(\text{Im}c)^{\f12}}\|\vec{\CE}_{-1}\|_{L^2}\lesssim \frac{1}{n^{\f12}(\text{Im}c)^{\f12}}\left(\|\varrho_0\|_{H^2}+\|\dv(\cu_0,\cv_0)\|_{H^1}\right),\label{3.3N.19-1}\\
\|(\pa_Y^2\tilde{u},\pa_Y^2\tilde{v})\|_{L^2}&\lesssim \frac{n}{\a\text{Im}c}\|\vec{\CE}_{-1}\|_{L^2}\lesssim\f{1}{\text{Im}c}\left(\|\varrho_0\|_{H^2}+\|\dv(\cu_0,\cv_0)\|_{H^1}\right).\label{3.3N.19}
\end{align}
By summarizing the estimates  \eqref{3.3N.17}, \eqref{3.3N.18}, \eqref{3.3N.19-2}-\eqref{3.3N.19} and using the fact that $n(\text{Im}c)^2\gtrsim 1$ and $n^{\f12}(\text{Im}c)^{\f32}\gtrsim 1$ for $c\in D_0$,  we derive the following estimates
$$
\begin{aligned}
\|(\cm^{-1}\rho,u,v)\|_{H^1}&\lesssim \frac{1}{\text{Im}c}\|\Omega(f_u,f_v)\|_{L^2_w}+\f1{\a}\|f_u\|+\|f_v\|_{L^2}+\|\dv(f_u,f_v)\|_{L^2},\\
\|\pa_Y^2u,\pa_Y^2v)\|_{L^2}&
\lesssim \frac{n^{\f12}}{(\text{Im}c)^{\f12}}\left(1+\frac{1}{n^{\f12}(\text{Im}c)^{\f32}} \right)\|\Omega(f_u,f_v)\|_{L^2_w}+\frac{1}{\text{Im}c}\left(\f1{\a}\|f_u\|+\|f_v\|_{L^2}+\|\dv(f_u,f_v)\|_{L^2}\right)\\
&\lesssim \frac{n^{\f12}}{(\text{Im}c)^{\f12}}\|\Omega(f_u,f_v)\|_{L^2_w}+\frac{1}{\text{Im}c}\left(\f1{\a}\|f_u\|+\|f_v\|_{L^2}+\|\dv(f_u,f_v)\|_{L^2}\right).
\end{aligned}
$$
Thus,  the improved estimates \eqref{3.0.5} and \eqref{3.0.6} are proved. And this completes the
 proof of the proposition. \qed
\section{Proof of Theorem \ref{thm1.1}}
Finally, in this section, we prove Theorem \ref{thm1.1}. 
We construct the solution to linearized system \eqref{1.4} with no-slip boundary condition \eqref{1.4-1} in the following form
\begin{align}\label{4.0}
\vec{\Xi}(Y;c)=\vec{\Xi}_{\text{app}}(Y;c)-\vec{\Xi}_{\text{sm}}(Y;c)-\vec{\Xi}_{\text{re}}(Y;c),
\end{align}
Here $\vec{\Xi}_{\text{app}}$ is the approximate solution obtained in \eqref{2.3.1} which satisfies \eqref{2.4.1}, $\vec{\Xi}_{\text{sm}}=(\rho_{\text{sm}},u_{\text{sm}},v_{\text{sm}})$ and $\vec{\Xi}_{\text{re}}=(\rho_{\text{re}},u_{\text{re}},v_{\text{re}})$ solve the remainder system
\begin{align}
\CL(\vec{\Xi}_{\text{sm}})=(0,E_{u,\text{sm}},E_{v,\text{sm}}),~ v_{\text{sm}}|_{Y=0}=0,\nonumber
\end{align}
and 
\begin{align}
\CL(\vec{\Xi}_{\text{re}})=(0,0,E_{v,\text{re}}),~ v_{\text{re}}|_{Y=0}=0,\nonumber
\end{align}
respectively. By Proposition \ref{prop2.2} and \ref{prop3.0}, both $\vec{\Xi}_{\text{sm}}$ and $\vec{\Xi}_{\text{re}}$ are well-defined. Moreover,
it is straightforward to check that $\vec{\Xi}=(\rho,u,v)$ satisfies
$$\CL(\vec{\Xi})=\vec{0},~ v|_{Y=0}=0.
$$

To recover the  no-slip boundary condition on the tangential component, we introduce the mapping
\begin{align}
\mathcal{F}: D_0\rightarrow \mathbb{C},~ \mathcal{F}(c)\eqdef u(0;c)=\mathcal{F}_{\text{app}}(c)-u_{\text{sm}}(0;c)-u_{\text{re}}(0;c).\nonumber
\end{align}

On one hand, from Proposition \ref{prop2.1},  $\mathcal{F}_{\text{app}}(c)$ is analytic and has a unique zero point in $D_0$.  On the other hand, according to Remark \ref{rmk3.1} (a), both $u_{\text{sm}}(0;c)$ and $u_{\text{de}}(0;c)$ are analytic in $D_0$. Then by applying  estimates \eqref{3.0.2}, \eqref{3.0.3} to $\vec{\Xi}_{\text{sm}}$ with $(f_u,f_v)=(E_{u,\text{sm}},E_{v,\text{sm}})$, using the bound in \eqref{2.4.5} and the Sobolev inequality,
we deduce that 
\begin{align}
|u_{\text{sm}}(0;c)|&\leq \|u_{\text{sm}}(\cdot~;c)\|_{L^2}^{\f12}\|\pa_Yu_{\text{sm}}(\cdot~;c)\|_{L^2}^{\f12}\nonumber\\
&\leq \frac{Cn^{\f14}}{\a(\text{Im}c)^{\f54}}\|(E_{u,\text{sm}},E_{v,\text{sm}})\|_{L^2}
\leq \frac{Cn^{\f14}\vep^{\f7{16}}}{\a(\text{Im}c)^{\f54}}\leq C\vep^{\f1{16}},~ \forall c\in D_0.\label{4.1}
\end{align}
Here we have used \eqref{para} in the last inequality. For $u_{\text{re}}(0;c)$, we use the bounds given in  \eqref{3.0.5} for $\|u_{\text{re}}\|_{H^1}$ with  $(f_u,f_v)=(0,E_{v,\text{re}})$ and \eqref{2.4.4} to have
\begin{align}\label{4.2}
|u_{\text{re}}(0;c)|&\leq C\|u_{\text{re}}(\cdot~;c)\|_{H^1}\leq \frac{C}{\text{Im}c}\|E_{v,\text{re}}\|_{L^2_w}+\|E_{v,\text{re}}\|_{H^1}\nonumber\\
&\leq C\left(1+\frac{1}{\text{Im}c}\right)\vep^{\f3{16}}\leq C\vep^{\f{1}{16}}.
\end{align}
Thus, by recalling   the lower bound of $|\mathcal{F}_{\text{app}}(c)|$ on the circle $\pa D_0$ 
 in \eqref{2.3.5}, and by using the bounds in \eqref{4.1} and \eqref{4.2}, it holds that
$$\forall c\in \pa D_0,~|\mathcal{F}(c)-\mathcal{F}_{\text{app}}(c)|\leq |u_{\text{re}}(0;c)|+|u_{\text{sm}}(0;c)|\leq C\vep^{\f1{16}}\leq \f14{K^{-\theta}}\leq \frac{1}{2}|\mathcal{F}_{\text{app}}(c)|,
$$
by taking $\vep\in (0,1)$ suitably small. Therefore, by Rouch\'e's Theorem, $\CF(c)$ and $\CF_{\text{app}}(c)$ have the same number of zero points in $D_0$. This justifies the existence of 
a unique $c\in D_0$ such that $\vec{\Xi}(Y;c)$ defined in \eqref{4.0} solves the linear equation \eqref{1.4} with the no-slip boundary condition \eqref{1.4-1}. The proof of Theorem \ref{thm1.1} is completed. 
\qed\\

\noindent{\bf Acknowledgment:} The research of T. Yang was supported by the General Research Fund of Hong Kong, CityU 11303521.

\section{Appendix}
Recall \eqref{3.1.5} the definition of operator $\Lambda$.
\begin{lemma}[The operator $\Lambda^{-1}$]\label{lm4.1} Let $\cm\in (0,1)$ and $\a\in (0,1)$. For small $|c|\ll1$, if $\|h\|_{L^2}<\infty$, there exists a unique solution $\psi\in H^2(\mathbb{R}_+)\cap H^1_0(\mathbb{R}_+)$ to $\Lambda(\psi)=h$ in $\mathbb{R}_+$, $\psi|_{Y=0}=0$ which satisfies the following estimates
	\begin{align}\label{ap1}
	\|\pa_Y^2\psi\|_{L^2}+\|(\pa_Y\psi,\a\psi)\|_{L^2}\lesssim \min\{\a^{-1}\|h\|_{L^2},\|(1+Y)h\|_{L^2}\}.
	\end{align}
	Moreover, the solution mapping $\Lambda^{-1}(\cdot~;c): L^2(\mathbb{R}_+)\rightarrow H^2(\mathbb{R}_+)\cap H^1_0(\mathbb{R}_+)$ is analytic in $c$.
\end{lemma}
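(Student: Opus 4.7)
The plan is to prove the lemma by a direct energy method combined with Lax-Milgram. First I would associate to $\Lambda$ the sesquilinear form
\[
B(\psi,\varphi)\eqdef \int_0^\infty A^{-1}\pa_Y\psi\,\overline{\pa_Y\varphi}\,\dd Y+\a^2\int_0^\infty \psi\bar\varphi\,\dd Y,\qquad \psi,\varphi\in H^1_0(\mathbb{R}_+),
\]
so that $-\Lambda\psi=h$ corresponds weakly to $B(\psi,\varphi)=-\int_0^\infty h\bar\varphi\,\dd Y$. For $\cm\in(0,1)$ and $|c|\ll 1$, the expansion $A=1-\cm^2U_s^2+O(|c|)$ from \eqref{2.1.4} yields a uniform bound $\mathrm{Re}(A^{-1})\geq \kappa>0$ on $\mathbb{R}_+$, so $\mathrm{Re}\,B(\psi,\psi)\geq \kappa\|\pa_Y\psi\|_{L^2}^2+\a^2\|\psi\|_{L^2}^2$; together with the trivial continuity of $B$ on $H^1_0\times H^1_0$, the complex version of the Lax-Milgram theorem produces a unique weak solution $\psi\in H^1_0(\mathbb{R}_+)$.

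For the two a priori estimates, I would test the equation against $\bar\psi$ and take real parts to obtain
\[
\|\pa_Y\psi\|_{L^2}^2+\a^2\|\psi\|_{L^2}^2\lesssim \Bigl|\int_0^\infty h\bar\psi\,\dd Y\Bigr|.
\]
Bounding the right-hand side by Cauchy-Schwarz $\|h\|_{L^2}\|\psi\|_{L^2}$ and absorbing $\|\psi\|_{L^2}\lesssim \a^{-2}\|h\|_{L^2}$ gives $\|\pa_Y\psi\|_{L^2}+\a\|\psi\|_{L^2}\lesssim \a^{-1}\|h\|_{L^2}$; alternatively, since $\psi|_{Y=0}=0$ Hardy's inequality yields $\|(1+Y)^{-1}\psi\|_{L^2}\lesssim \|\pa_Y\psi\|_{L^2}$, hence
\[
\Bigl|\int_0^\infty h\bar\psi\,\dd Y\Bigr|\leq \|(1+Y)h\|_{L^2}\|(1+Y)^{-1}\psi\|_{L^2}\lesssim \|(1+Y)h\|_{L^2}\|\pa_Y\psi\|_{L^2},
\]
giving the weighted bound $\|\pa_Y\psi\|_{L^2}+\a\|\psi\|_{L^2}\lesssim \|(1+Y)h\|_{L^2}$. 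Taking the minimum of these two produces the $H^1$-part of \eqref{ap1}.

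For $H^2$-regularity I would simply solve algebraically for $\pa_Y^2\psi$ from the equation,
\[
\pa_Y^2\psi=A(h+\a^2\psi)-A\,\pa_Y(A^{-1})\,\pa_Y\psi,
\]
and use $|A|\lesssim 1$ together with $|\pa_Y(A^{-1})|\lesssim|\pa_YU_s|\lesssim 1$ (via \eqref{A3}) to get $\|\pa_Y^2\psi\|_{L^2}\lesssim \|h\|_{L^2}+\a^2\|\psi\|_{L^2}+\|\pa_Y\psi\|_{L^2}$; since $\a\in(0,1)$ this completes \eqref{ap1}. Finally, because $A(\cdot;c)\in L^\infty(\mathbb{R}_+)$ depends holomorphically on $c$, the map $c\mapsto \Lambda(\cdot;c)\in\mathcal{L}(H^2\cap H^1_0,L^2)$ is holomorphic, and the uniform bounds above imply that $\Lambda(c)$ remains boundedly invertible on a small neighborhood of $c=0$; analyticity of $\Lambda^{-1}(\cdot;c)$ then follows from the Neumann series $\Lambda(c)^{-1}=\Lambda(0)^{-1}\bigl[I+(\Lambda(c)-\Lambda(0))\Lambda(0)^{-1}\bigr]^{-1}$. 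I do not expect any serious obstacle: the only nuance is that $A^{-1}$ is complex-valued, but the smallness of $|c|$ together with $\cm<1$ ensures $\mathrm{Re}(A^{-1})$ is uniformly positive so the form is effectively coercive, and no sign issue arises.
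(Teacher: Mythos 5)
Your proposal is correct and follows essentially the same route as the paper: test the equation against $\bar\psi$, use $\mathrm{Re}(A^{-1})\gtrsim 1$ for $\cm\in(0,1)$ and $|c|\ll1$ to get coercivity, apply Cauchy--Schwarz (resp. Hardy) for the two bounds in \eqref{ap1}, read off $\pa_Y^2\psi$ algebraically from the equation, and obtain existence and analyticity by perturbing the uniformly elliptic real-coefficient operator at $c=0$. The paper phrases the existence step as invertibility of $\Lambda_0$ plus a standard perturbation rather than Lax--Milgram plus a Neumann series, but this is only a cosmetic difference.
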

\begin{proof}
	We first establish the a priori estimate \eqref{ap1}. Denote $\langle\cdot,\cdot\rangle$ the standard inner product in $L^2(\mathbb{R}_+)$.
	Taking inner product with $-\b{\psi}$ in the equation $\Lambda(\psi)=h$ gives that
	\begin{align}\label{ap2}
	\int_0^\infty A^{-1}|\pa_Y\psi|^2+\a^2|\psi|^2\dd Y=\langle h,-\b{\psi}\rangle.
	\end{align}
	Note that
	\begin{align}\label{ap5} A^{-1}=(1-\cm^2U_s^2)^{-1}+O(1)|c|.
	\end{align}Then for suitably small $|c|$, the real part of \eqref{ap2}  yields
	\begin{align}
	\|\pa_Y\psi\|_{L^2}^2+\a^2\|\psi\|_{L^2}^2\leq C|\text{Re}\langle h,\b{\psi}\rangle|\leq C \|h\|_{L^2}\|\psi\|_{L^2}\leq C \a^{-2}\|h\|_{L^2}^2,\nonumber
	\end{align}	
	which gives $H^1$-estimate of $\psi$. The estimate of $\pa_Y^2\psi$ can be obtained by using
	the equation to have
	\begin{align}
	\|\pa_Y^2\psi\|_{L^2}&\lesssim\nonumber \|A^{-1}\pa_YA\|_{L^\infty}\|\pa_Y\psi\|_{L^2}+\a^2\|A\psi\|_{L^2}+\|Ah\|_{L^2}\\
	&\lesssim \|(\pa_Y\psi,\a\psi)\|_{L^2}+\|h\|_{L^2}\lesssim \a^{-1}\|h\|_{L^2}.\label{ap4}
	\end{align}
	
	If in addition,  $(1+Y)h\in L^2(\mathbb{R}_+)$, we can obtain $|\langle h,\b{\psi} \rangle|\leq \|Y^{-1}\psi\|_{L^2}\|Yh\|_{L^2}\leq C\|\pa_Y\psi\|_{L^2}\|Yh\|_{L^2}$ by using the Hardy inequality for the term $Y^{-1}\psi$. Thus real part of \eqref{ap2} yields the $H^1$-estimate \begin{align}
	\|(\pa_Y\psi,\a\psi)\|_{L^2}\leq C\|Yh\|_{L^2}.\label{ap3}
	\end{align}
	The estimate on $\pa_Y^2\psi$ then follows from the equation and  $H^1$-estimate \eqref{ap3} 
	similar to the estimation in  \eqref{ap4}. Hence the estimate \eqref{ap1} holds. The uniqueness of solution follows from the a priori estimate \eqref{ap1}. 
	
	Set
	$$\Lambda_0: H^2(\mathbb{R}_+)\cap H^1_0(\mathbb{R}_+)\rightarrow L^2(\mathbb{R}_+),~\Lambda_0(\psi)=\pa_Y\left[(1-\cm^2U_s^2)^{-1}\pa_Y\psi \right]-\a^2\psi.$$
	For $\cm\in (0,1)$, the  operator $\Lambda_0$ is  clearly uniformly elliptic and  invertible in $L^2(\mathbb{R}_+)$. Then by  \eqref{ap5}, the existence and analytic dependence on $c$ of $\Lambda^{-1}$ follow from the standard 
	argument. Hence, we omit the details for brevity. The proof of the lemma is completed.
\end{proof}

\end{document}